\g@addto@macro\bfseries{\boldmath}
\numberwithin{equation}{section}
\theoremstyle{plain}
    \newtheorem{theorem}[equation]{Theorem}
    \newtheorem{lemma}[equation]{Lemma}
    \newtheorem{corollary}[equation]{Corollary}
    \newtheorem{proposition}[equation]{Proposition}
    \newtheorem*{theorem*}{Theorem}
    \newtheorem*{proposition*}{Proposition}
    \newtheorem*{corollary*}{Corollary}
    \newtheorem*{lemma*}{Lemma}
    \newtheorem*{conjecture*}{Conjecture}
    \newtheorem{definition-theorem}[equation]{Definition/Theorem}
    \newtheorem{definition-lemma}[equation]{Definition/Lemma}
\theoremstyle{definition}
    \newtheorem{definition}[equation]{Definition}
    \newtheorem{example}[equation]{Example}
    \newtheorem{examples}[equation]{Examples}
    \newtheorem{remark}[equation]{Remark}
    \newcommand{\N}{\mathbb{N}}
    \newcommand{\Z}{\mathbb{Z}}
\renewcommand{\phi}{\varphi}
\let\epsilon\varepsilon
\newcommand{\into}{\hookrightarrow}
\newcommand{\restrict}{\vert}
\newcommand{\id}{\mathrm{id}}
\newcommand{\ol}{\overline}
    \DeclareMathOperator{\Hom}{Hom}
    \DeclareMathOperator{\res}{res}
    \DeclareMathOperator{\ind}{ind}
    \DeclareMathOperator{\Ad}{Ad}
    \DeclareMathOperator{\Aut}{Aut}
    \DeclareMathOperator{\Rep}{Rep}
    \DeclareMathOperator{\Irr}{Irr}
    \DeclareMathOperator{\GL}{GL}
    \DeclareMathOperator{\pind}{i}
    \DeclareMathOperator{\pres}{r}
    \DeclareMathOperator{\supp}{supp}
    \DeclareMathOperator{\sign}{sign}
    \DeclareMathOperator{\Map}{Fun}
\newcommand{\Comp}{\operatorname{Comp}}
\newcommand{\Part}{\operatorname{Part}}
\newcommand{\Prim}{\operatorname{Prim}}
\newcommand{\prim}{\operatorname{prim}}
\newcommand{\Power}[1]{\mathcal P({#1})}
\newcommand{\orbits}[2]{{#1}\backslash {#2}}
\newcommand{\reg}{\operatorname{reg}}
\newcommand{\Chromatic}{\mathcal{G}}
\newcommand{\GraphAlg}{\mathcal{A}}
\newcommand{\Symreps}{\mathcal{S}}
\newcommand{\Sym}{\operatorname{Sym}}
\newcommand{\Zelev}[1]{\mathtt{#1}}
\newcommand{\triv}{\operatorname{triv}}
\newcommand{\bigast}{\mbox{\large$\ast$}}
\newcommand{\Set}{\operatorname{Set}}
\renewcommand{\k}{\Bbbk}
\newcommand{\inj}{\mathrm{inj}}
\newcommand{\Colour}{\operatorname{Col}}
\renewcommand{\setminus}{-}
\title[Combinatorial Hopf algebras from wreath products]{Combinatorial Hopf algebras from representations of families of wreath products}
\author{Tyrone Crisp and Caleb Kennedy Hill}
\address{Department of Mathematics \& Statistics, University of Maine.
5752 Neville Hall, Room 333.
Orono, ME 04469 USA}
\email{tyrone.crisp@maine.edu}
\email{caleb.hill@maine.edu}
\date{February 23, 2021}
\subjclass[2010]{05E10 (16T30, 20C15)}
\begin{document}

\begin{abstract}
We construct Hopf algebras whose elements are representations of combinatorial automorphism groups, by generalising a theorem of Zelevinsky on Hopf algebras of representations of wreath products. As an application we attach symmetric functions to representations of graph automorphism groups, generalising and refining Stanley's chromatic symmetric function.
\end{abstract}

\maketitle

\section{Introduction}

In this paper we construct Hopf algebras whose elements are linear representations of automorphism groups of certain combinatorial structures. We do this by generalising a theorem of Zelevinsky \cite[7.2]{Zelevinsky} on Hopf algebras of representations of wreath products $S_n\ltimes H^n$ to more general wreath products, and then applying Clifford theory to pass from wreath products to combinatorial automorphism groups. Let us illustrate and motivate the construction with an example.

The \emph{chromatic polynomial} of a finite, simple, undirected graph $\Gamma$ is the polynomial $\chi_\Gamma$ satisfying  $\chi_\Gamma(m) = \#\Colour_{\Gamma,m}$,  the number of proper $m$-colourings of $\Gamma$ (i.e., labellings of the vertices of $\Gamma$ by numbers $1,\ldots,m$ such that adjacent vertices have distinct labels). This much-studied graph invariant was first introduced by Birkhoff in \cite{Birkhoff}. A variation on $\chi_\Gamma$,  introduced in \cite{Hanlon} and further studied and generalised in \cite{Cameron-Jackson-Rudd,Cameron-Kayibi} under the name \emph{orbital chromatic polynomial}, counts $\#(\orbits{\Aut\Gamma}{\Colour_{\Gamma,m}})$, the number of orbits in $\Colour_{\Gamma,m}$ for the natural action of the automorphism group $\Aut\Gamma$. (In this paper the symbol `$\backslash$' will always denote an orbit space; for set-theoretic differences we shall write $X\setminus Y$.) To illustrate, the graphs
\[
\Gamma: \xygraph{ 
!{(0,0) }*+{\bullet}="a" 
!{(1,.5) }*+{\bullet}="b" 
!{(1,-.5) }*+{\bullet}="c" 
!{(-1,.5)}*+{\bullet}="d"
!{(-1,-.5)}*+{\bullet}="e" 
"a"-"b"-"c"-"a"-"d"-"e"-"a"
}
\qquad \qquad \textrm{and} \qquad \qquad
\Lambda: \xygraph{ 
!{(-1,0) }*+{\bullet}="a" 
!{(0,.5) }*+{\bullet}="b" 
!{(0,-.5) }*+{\bullet}="c" 
!{(1,0)}*+{\bullet}="d"
!{(2,0)}*+{\bullet}="e" 
"e"-"d"-"b"-"a"-"c"-"d"
"b"-"c"
}
\]
have $\chi_{\Gamma}=\chi_{\Lambda}$, but $\# (\orbits{\Aut\Gamma}{\Colour_{\Gamma,3}})=3$ while $\# (\orbits{\Aut\Lambda}{\Colour_{\Lambda,3}})=6$. (This pair of graphs is taken from \cite[Figure 1]{Stanley-chromatic}.)

One can generalise and refine these invariants using finite harmonic analysis. Letting $\k^{\Colour_{\Gamma,m}}$ be the permutation representation of $\Aut\Gamma$ corresponding to the action of $\Aut\Gamma$ on $\Colour_{\Gamma,m}$ ($\k$ being some algebraically closed field of characteristic zero), we consider for each finite-dimensional $\k$-linear representation $\gamma$ of $\Aut\Gamma$ the intertwining number 
\[
\chi_{\Gamma,\gamma}(m)\coloneqq \dim\Hom_{\Aut\Gamma}(\k^{\Colour_{\Gamma,m}},\gamma) = \frac{1}{\#\Aut\Gamma} \sum_{g\in \Aut\Gamma} \operatorname{ch}_\gamma(g) \cdot \#\Colour_{\Gamma,m}^g
\]
where $\operatorname{ch}_\gamma$ is the character of the representation $\gamma$, and $\Colour_{\Gamma,m}^g$ is the set of colourings fixed by the automorphism $g$. Hanlon observed in \cite[Theorem 2.1]
{Hanlon} that the cardinalities of these fixed sets are themselves chromatic polynomials,  and so each $\chi_{\Gamma,\gamma}$ is a polynomial. Putting $\gamma=$ the regular representation gives the chromatic polynomial, while putting $\gamma=$ the trivial one-dimensional representation gives the orbital chromatic polynomial. 

It is clear from the definition that the map $\gamma\mapsto \chi_{\Gamma,\gamma}$ is additive, and so decomposing the regular representation into irreducibles gives 
\[
\chi_\Gamma= \sum_{\gamma\in \Irr(\Aut\Gamma)} (\dim \gamma) \chi_{\Gamma,\gamma}
\]
where the sum runs over the set of isomorphism classes of irreducible representations. The polynomials $\chi_{\Gamma,\gamma}$, and the decomposition of $\chi_\Gamma$ that they afford, deserve closer study. In particular, one would like to understand how these polynomials behave with respect to unions and decompositions of graphs, and it is at this point that Hopf algebras enter the picture.

The use of Hopf algebras to study assembly/disassembly constructions is well established, both in combinatorics (see, e.g., \cite{Joni-Rota}, \cite{Schmitt-HACS}, \cite{Schmitt-IHA}, \cite{Aguiar-Mahajan}, \cite{Grinberg-Reiner}) and in representation theory (see, e.g., \cite{Geissinger}, \cite{Zelevinsky}, \cite{vanLeeuwen}, \cite{Aguiar-et-al}, \cite{Shelley-Abrahamson}). Let us recall two examples of particular relevance to the present discussion.

Our first example is the  \emph{Hopf algebra of graphs} $\Chromatic$ \cite{Schmitt-HACS}: this is the free abelian group with basis the set of isomorphism classes of finite simple graphs; with multiplication given by disjoint union of graphs; and with comultiplication given by partitions into pairs of subgraphs. The character $\Chromatic\to \Z$ given by $\Gamma\mapsto \chi_{\Gamma}(1)$ induces, as shown in \cite{ABS}, a morphism of Hopf algebras $\Chromatic\to \Sym_{\Z}$ into the Hopf algebra of symmetric functions with integer coefficients; this map sends $\Gamma$ to the \emph{chromatic symmetric function} $X_\Gamma$ introduced by Stanley in \cite{Stanley-chromatic}, and the chromatic polynomial $\chi_\Gamma$ can be recovered from $X_\Gamma$ by specialisation: 
$\chi_\Gamma(m) = X_\Gamma(1^m)$. For a discussion of how the Hopf-algebra point of view illuminates certain properties of $\chi_\Gamma$ and $X_\Gamma$, see \cite[7.3]{Grinberg-Reiner}.

Our second example of a Hopf algebra is the \emph{Hopf algebra of representations of the symmetric groups} $\Symreps$ \cite[\S6]{Zelevinsky}: this is the free abelian group with basis the set of isomorphism classes of irreducible representations of the symmetric groups $S_n$ (for all $n\geq 0$); with multiplication given by induction of representations, and comultiplication by restriction of representations, along the standard inclusions $S_k\times S_l\into S_{k+l}$. The character $\Symreps\to\Z$ sending each trivial representation to $1$ and all other irreducible representations to $0$ induces---again via the technology of \cite{ABS}---a morphism of Hopf algebras $\Symreps \to \Sym_{\Z}$ that is in fact an isomorphism; this is the well-known \emph{Frobenius characteristic} map, sending the irreducible representation of $S_n$ associated with a partition $\lambda$ of $n$ to the Schur function $s_\lambda$; see, e.g., \cite[7.18]{Stanley-EC2}.

In Section \ref{sec:graphs} of this paper, as an instance of the general results established in Sections \ref{sec:Young} and \ref{sec:Hopf}, we construct a Hopf algebra $\GraphAlg$ that simultaneously generalises both of the above examples. The underlying additive group of our Hopf algebra $\GraphAlg$ is free abelian with basis $\bigsqcup_{\Gamma}\Irr(\Aut\Gamma)$, the set of isomorphism classes of irreducible representations of the automorphism groups of finite simple graphs (modulo graph isomorphisms). The multiplication/comultiplication in $\GraphAlg$ are given by combining union/decomposition of graphs with induction/restriction of representations. The map sending $\gamma\in \Irr(\Aut\Gamma)$ to $\chi_{\Gamma,\gamma}(1)\in \Z$ induces a homomorphism of Hopf algebras $\GraphAlg\to \Sym_{\Z}$, thus associating a symmetric function $X_{\Gamma,\gamma}$ to each finite graph $\Gamma$ and each finite-dimensional representation $\gamma$ of $\Aut\Gamma$. The polynomial $\chi_{\Gamma,\gamma}$ defined above is recovered from $X_{\Gamma,\gamma}$ by specialisation. 

The subalgebra of $\GraphAlg$ corresponding to graphs with no edges is isomorphic to the Hopf algebra $\Symreps$ of representations of the symmetric groups, and the map $\Symreps \to \Sym_{\Z}$ obtained by restricting our map $\GraphAlg\to \Sym_{\Z}$ is the Frobenius characteristic. On the other hand, the map sending a graph $\Gamma$ to the regular representation of $\Aut\Gamma$ gives an embedding of Hopf algebras $\Chromatic\into \GraphAlg$, and the decomposition of the regular representation into irreducibles yields an equality of symmetric functions 
\[
X_\Gamma = \sum_{\gamma\in \Irr(\Aut\Gamma)}(\dim\gamma)X_{\Gamma,\gamma}.
\] 
We thus obtain a refinement of the polynomial invariants $\chi_{\Gamma,\gamma}$ by symmetric functions, generalising Stanley's refinement of $\chi_\Gamma$ by $X_\Gamma$; and we obtain identities among the $X_{\Gamma,\gamma}$ (and, by specialisation, among the $\chi_{\Gamma,\gamma}$) for varying $\Gamma$ and $\gamma$ from the fact that the map $\gamma\mapsto X_{\Gamma,\gamma}$ is a homomorphism of Hopf algebras. The further study of the graph invariants $X_{\Gamma,\gamma}$ and $\chi_{\Gamma,\gamma}$ will be taken up in future work.

We now describe the connection with wreath products, still in the example of graph colourings. For each $n\geq 0$ let $E_n$ denote the set of two-element subsets of $\{1,\ldots,n\}$. The symmetric group $S_n$ acts in a natural way on $E_n$, and hence on the group $\Map(E_n, S_2)$ of functions $E_n\to S_2$, and on the set $\Irr(\Map(E_n,S_2))$ of irreducible representations of this abelian group. The $S_n$-orbits in $\Irr(\Map(E_n,S_2))$ can be identified with the isomorphism classes of graphs with $n$ vertices, in such a way that the stabiliser of a point in $\Irr(\Map(E_n,S_2))$ is equal to the automorphism group of the corresponding graph. Clifford theory (as explained in this context in \cite[Section 4.3]{James-Kerber}) then yields an identification
\begin{equation}\label{eq:Clifford-intro}\tag{$\star$}
\bigsqcup_{\Gamma} \Irr(\Aut\Gamma) \xrightarrow{\cong} \bigsqcup_{n\geq 0} \Irr(S_n\ltimes \Map(E_n,S_2))
\end{equation}
between the basis for $\GraphAlg$ and the irreducible representations of the wreath product groups $S_n\ltimes \Map(E_n,S_2)$. 

The representation theory of wreath products can be quite complicated: indeed, the bijection \eqref{eq:Clifford-intro} shows that classifying the irreducible representations of the groups $S_n\ltimes \Map(E_n,S_2)$ for all $n$ means classifying all finite simple graphs up to isomorphism \emph{and} classifying the irreducible representations of all finite groups (since every finite group is, as shown by Frucht  \cite{Frucht}, the automorphism group of a graph). 

There is, however, one aspect of the representation theory of the wreath products $S_n\ltimes  \Map(E_n,S_2)$ that is rather more easily understood: namely, the way in which the representations of these groups fit together for different $n$. Generalising work of Zelevinsky \cite{Zelevinsky}, who considered wreath products of the form $S_n\ltimes H^n$, we prove that for each suitable family of $S_n$-sets $Y_n$, and for each finite group $H$, the free abelian group with basis $\bigsqcup_{n\geq 0} \Irr(S_n\ltimes \Map(Y_n,H))$ can be given a natural Hopf-algebra structure. In fact we obtain three different (in general) Hopf algebra structures: one a positive self-adjoint Hopf algebra as in \cite{Zelevinsky}, and another dual pair of non-PSH but connected, commutative, and cocommutative Hopf algebras. (In the situation studied by Zelevinsky these three Hopf algebras are all identical.) Our Hopf algebras come equipped moreover with a canonical $\Z$-valued character; as shown by Aguiar, Bergeron, and Sottile \cite{ABS} this is equivalent to admitting a canonical homomorphism  into  the Hopf algebra of symmetric functions. Putting $Y_n=E_n$ and $H=S_2$ yields the Hopf algebra $\GraphAlg$ of representations of graph automorphisms, and the symmetric functions $X_{\Gamma,\gamma}$. More examples of this kind are possible: for example, letting $Y_n$ be the set of all nonempty subsets of $\{1,\ldots,n\}$ has the effect of replacing graphs by hypergraphs; letting $Y_n$ be the set of ordered two-element subsets gives directed graphs; and replacing $S_2$ by another group $H$ has the effect of introducing labellings of the edges of our (hyper)graphs by the nontrivial irreducible representations of $H$.

The paper is organised as follows. In Section \ref{sec:Young} we first describe the families of sets $Y_n$ that go into our construction. The definition is easily stated: we consider endofunctors on the category of finite sets and injective maps that preserve the empty set and preserve intersections; then $Y_n$ is the value of such a functor on the set $\{1,\ldots,n\}$. We then define induction and restriction functors between the representations of the  wreath products $S_n\ltimes \Map(Y_n,H)$ for varying $n$; we consider both the standard induction/restriction functors, and a variant of these functors similar to the parabolic induction/restriction functors from the representation theory of reductive groups. These functors become, in Section \ref{subsec:R-Hopf}, the multiplication and comultiplication maps in our Hopf algebras. In Section \ref{subsec:Clifford} we apply Clifford theory to yield a second description of our Hopf algebras in terms of representations of automorphisms of certain combinatorial structures. In Section \ref{subsec:basic} we show that each of our Hopf algebras contains a sub-Hopf-algebra of representations of the base group $\Map(H,{Y_n})$, an example being the subalgebra $\Chromatic$ of $\GraphAlg$; and in Section \ref{subsec:zeta} we compute the canonical maps from our Hopf algebras to $\Sym_{\Z}$, under the assumption that the coefficient group $H$ is abelian. Section \ref{sec:graphs} then presents in more detail the example of $\GraphAlg$ that we outlined above.

Our constructions bear a resemblance to known constructions of Hopf algebras from species, such as those described in \cite{Schmitt-HACS} and \cite{Aguiar-Mahajan} for example, although as far as we are aware the main construction that we study here has not previously appeared in the literature beyond the special cases $Y_N=\emptyset$ and $Y_N=N$.  There is however one concrete point of overlap between our construction and \cite{Schmitt-HACS}: if $H$ is abelian then one of the sub-Hopf-algebras that we construct in Section \ref{subsec:basic}---namely, the subalgebra generalising the subalgebra $\Chromatic$ of $\GraphAlg$---is isomorphic to the Hopf algebra of a coherent exponential species as defined in \cite{Schmitt-HACS}; see Proposition \ref{prop:HACS}.

\subsection*{Acknowledgements} The first author thanks Ehud Meir and Uri Onn for early discussions on the topic of this project, the idea for which first arose during our joint work on \cite{CMO1}. Some of the results presented here also appear in the second author's MA thesis, submitted to the University of Maine in Spring 2020.

\section{Young sets and wreath products}\label{sec:Young}

\subsection{Young sets}

In this section we define the combinatorial objects from which we shall construct families of wreath product groups.

We let $\Set$ denote the category of finite sets, while $\Set^\times$ and $\Set^{\inj}$ denote the subcategories of bijective maps and injective maps, respectively. A functor $Y:\Set^\inj \to \Set^{\inj}$ thus assigns to each finite set $N$ a finite set $Y_N$, and to each injective map of finite sets $w:N\to M$ an injective map $Y_w:Y_N\to Y_M$. When $M=N$ we obtain an action of the symmetric group $S_N$ of $N$ on the set $Y_N$.
We will often write $w:Y_N\to Y_M$, instead of  $Y_w$; and in the case where $w$ is the inclusion of $N$ as a subset of $M$ we shall omit $w$ from the notation entirely and regard $Y_N$ as a subset of $Y_M$.

\begin{definition}\label{def:Young-set}
A \emph{Young set} is a  functor $Y: \Set^{\inj}  \to \Set^{\inj}$ satisfying $Y_\emptyset = \emptyset$ and $Y_K\cap Y_L=Y_{K\cap L}$ for all pairs of subsets $K,L$ of the same finite set $N$.
\end{definition}

The name `Young set' was chosen because these sets, and the families of groups to which they give rise, play an analogous role in our construction to that played by the Young subgroups in the representation theory of the symmetric groups.

\begin{examples}\label{examples:Young-set}
In most of these examples we describe the action of the functor on objects only, the action on morphisms being the obvious one.
\begin{enumerate}[\rm(1)]
\item The empty example: $Y_N=\emptyset$ for all sets $N$. We denote this example by $\emptyset$.
\item The basic example: $Y_N=N$, the  identity functor. We denote this example by $\id$.
\item Products, coproducts, and composites: if $Y$ and $Y'$ are Young sets, then  so are the product $(Y\times Y')_N\coloneqq  Y_N\times Y'_N$; the coproduct $(Y\sqcup Y')_N\coloneqq Y_N\sqcup Y'_N$; and the composite $(Y\circ Y')_N\coloneqq Y_{Y'_N}$. For instance,  for each fixed $m\geq 1$ we obtain a Young set $\id^m:N\mapsto N^m$ (where $N^m = N\times  \cdots \times N$).
\item Quotients: let $Y$ be a Young set equipped with an action of a fixed group $G$ by natural transformations; then the quotient by the $G$-action yields a Young set $Y/G$ with $(Y/G)_N = Y_N/G$. For instance, taking $Y_N=N^m$, on which $G=S_m$ acts by permuting coordinates, we obtain the Young set  of unordered $m$-tuples of elements of $N$.
\item Subsets and multisets: the functor $N\mapsto \{\text{nonempty subsets of $N$}\}$ is a Young set, as is the functor $N\mapsto \{\text{$r$-element subsets of $N$}\}$ for each $r\geq 1$. More generally, let $m$ be a positive integer and let $\xi:\{1,\ldots,m\} \to \Power{\N}$ be a function ($\mathcal P$ denotes the power set and $\N=\{0,1,\ldots\}$). To these data we associate the Young set
\[
Y_N\coloneqq \left\{ f: N\to \{0,1,\ldots,m\}\ \left|\ \begin{gathered} \exists n\in N\text{ with $f(n)\neq 0$, and } \\ \text{$\#f^{-1}(i)\in \xi(i)$ for every $i\geq 1$}\end{gathered}\right.\right\}.
\]
The map $Y_K\to Y_N$ associated to an injection of sets $K\into N$ is given by extending functions by $0$. Taking $m=1$ and $\xi(1)=\{1,2,3,\ldots\}$ gives the Young set of nonempty subsets of $N$; taking $m=1$ and $\xi(1)=\{r\}$ gives the $r$-element subsets; while taking $m\geq 2$ gives multisets of elements of $Y_N$ with up to $m$ repetitions of each element, with the function $\xi$ imposing restrictions on the number of times each allowed multiplicity occurs.
\item Permutations: $Y_N=S_N\setminus \{\id_N\}$, the set of nontrivial permutations of $N$. (Recall that `$\setminus$' means the set-theoretic difference.) This is a Young set with the action on morphisms given by assigning to each injective map of sets $w:K\to N$ and each permutation $g$ of $K$ the permutation $w_*g$ of $N$ defined by
\[
w_*g (n)\coloneqq \begin{cases} wg(k) & \text{if }n=w(k)\in w(K)\\ n & \text{otherwise.}\end{cases}
\]
\end{enumerate}
\end{examples}

\subsection{Families of wreath products from Young sets}

We begin with some generalities on wreath products; see, e.g., \cite[Chapter 4]{James-Kerber} for more information. 

Let $W$ be a finite group acting on a finite set $Y$, and let $H$ be another finite group. The set $\Map(Y,H)$ of functions $Y\to H$ is made into a group by setting $(f_1 f_2)(y)\coloneqq f_1(y) f_2(y)$. The group $W$ acts on $\Map(Y,H)$ by $(w  f)(y) \coloneqq f(w^{-1}y)$, for $w\in W$, $f\in \Map(Y,H)$, and $y\in Y$. The {wreath product} of $W$ with $H$ over $Y$ is defined to be the semidirect product $W\ltimes \Map(Y,H)$. Thus as a set $W\ltimes \Map(Y,H)= W\times \Map(Y,H)$, with group operation 
\[
(w_1,f_1)(w_2,f_2) = \big(w_1 w_2, (w_2^{-1}f_1) f_2 \big).
\]

The maps $w\mapsto (w,1)$ and $f\mapsto (1,f)$ identify $W$ and $\Map(Y,H)$ with subgroups of $W\ltimes \Map(Y,H)$. The \emph{support} of a function $f\in \Map(Y,H)$ is defined by $\supp(f)=\{y\in Y\ |\ f(y)\neq 1_H\}$. For each subset $Y'$ of $Y$ we regard $\Map(Y',H)$ as a subgroup of $\Map(Y,H)$, namely the subgroup $\{f:Y\to H\ |\ \supp(f) \subseteq Y'\}$. If $W'$ is a subgroup of $W$ such that $Y'$ is $W'$-invariant, then the embeddings $W'\subseteq W\subseteq W\ltimes \Map(Y,H)$ and $\Map(Y',H)\subseteq \Map(Y,H) \subseteq W\ltimes \Map(Y,H)$ give an embedding $W'\ltimes \Map(Y',H) \subseteq W\ltimes \Map(Y,H)$. If $W_1$ acts on $Y_1$, and $W_2$ acts on $Y_2$, then there is an obvious isomorphism
\[
(W_1\ltimes \Map(Y_1,H)) \times (W_2\ltimes \Map(Y_2, H)) \xrightarrow{\cong} (W_1\times W_2)\ltimes \Map(Y_1\sqcup Y_2,H)
\]

So much for generalities. We shall study the representation theory of the family of wreath product groups 
\[
G_N(Y,H) \coloneqq S_N \ltimes \Map(Y_N,H)
\]
associated to a Young set $Y$ and an auxiliary finite group $H$.

\begin{examples}\begin{enumerate}[\rm(1)]
\item $G_\emptyset(Y,H)=S_\emptyset \ltimes \Map(\emptyset,H)$ is always the trivial group.
\item $G_N(\emptyset,H)=S_N$ for every $H$, and $G_N(Y,S_1)=S_N$ for every $Y$.
\item Writing $[n]=\{1,2,\ldots,n\}$, we have $G_{[n]}(\id,H)=S_n\ltimes H^n$, the standard wreath product studied in \cite[\S7]{Zelevinsky}. For example, $G_{[n]}(\id, S_2)$ is the hyperoctahedral group, whose representations were first worked out by Young in \cite{Young}.
\item $G_{[n]}(\id^2,C_p)$ (where $C_p$ is the cyclic group of prime order $p$) is isomorphic to the group of those invertible $n\times n$ matrices over the ring $\Z/p^2\Z$ that are congruent, modulo $p$, to a permutation matrix. An isomorphism is given by identifying $S_n$ with the group of permutation matrices in $\GL_n(\Z/p^2\Z)$; identifying $C_p$ with the additive group $p\Z/p^2\Z$; and then identifying the group $\Map([n]^2,C_p)$ with the kernel of the reduction-modulo-$p$ map $\GL_n(\Z/p^2\Z)\to \GL_n(\Z/p\Z)$ by sending a function $f$ to the matrix $1 + [f(i,j)]_{(i,j)\in [n]^2}$. One can think of this group $G_{[n]}(\id^2,C_p)$ as a simplified model of $\operatorname{GL}_n(\Z/p^2\Z)$. The induction functors that we shall consider below are, in this example, the analogues of the functors used to study the representations of $\GL_n(\Z/p^k\Z)$ in \cite{CMO1}, \cite{CMO2}, and \cite{CMO3}. 
\end{enumerate}
\end{examples}

\subsection{Young subgroups of $G_N(Y,H)$}

We are going to define analogues, in the wreath products $G_N(Y,H)=S_N\ltimes \Map(Y_N,H)$, of the Young subgroups of the symmetric groups. We begin with some notation related to set partitions.

A \emph{weak partition} of a finite set $N$ is a finite multiset  $\lambda=(L_i\ |\ i\in I)$ of subsets $L_i\subseteq N$, called the \emph{blocks} of $\lambda$, having $L_i\cap L_j=\emptyset$ for $i\neq j$, and $N=\bigcup_{i\in I} L_i$. One or more of the $L_i$ may be empty; a weak partition with no empty blocks is called a \emph{partition}. We denote by $\Part^w_N$ the set of  weak partitions of $N$, and by $\Part_N$ the set of partitions.

To each $\lambda=(L_i)\in \Part^w_N$ we associate the Young subgroup $S_\lambda\subseteq S_N$ consisting of those permutations that leave invariant each of the blocks $L_i$.  There is an obvious isomorphism $\prod_{i} S_{L_i} \cong S_{\lambda}$.
Inserting or removing $\emptyset$s from a weak partition does not change the Young subgroup. 

For $\lambda,\mu\in \Part^w_N$ we write $\lambda\leq \mu$ to mean that each block of $\mu$ is a union of blocks of $\lambda$. This is not a partial order, but it restricts to a partial order on partitions. We have $\lambda \leq \mu$ if and only if $S_\lambda \subseteq S_\mu$.

Given $\lambda,\mu\in \Part^w_N$ we let $\lambda \wedge \mu\in \Part^w_N$ be the weak partition whose blocks are the intersections $L_i\cap M_j$ of the blocks of $\lambda$ and $\mu$. We have $S_{\lambda\wedge\mu}=S_{\lambda}\cap S_{\mu}$. 

Each bijective map of sets $w:N\to M$ induces a bijective map $\Part^w_N\to\Part^w_M$, by applying $w$ to each block of each partition. In particular, the group $S_N$ acts on $\Part^w_N$. The group $S_\lambda$ fixes the partition $\lambda$, but the isotropy group of $\lambda$ may be larger than $S_\lambda$. For each bijection $w:N\to M$  and each $\lambda \in \Part^w_N$ we have ${}^wS_\lambda = S_{w\lambda}$, where ${}^wS_\lambda\coloneqq wS_\lambda w^{-1}\subset S_M$.

We now return to our groups $G_N(Y,H)$. The Young set $Y$ and auxiliary group $H$ will be fixed throughout this section, so we drop them from the notation and just write $G_N$.
 
For each weak partition $\lambda=(L_i)$ of $N$ the subsets $Y_{L_i}$ of $Y_N$ are pairwise disjoint, by our assumption that $Y_K\cap Y_L=Y_{K\cap L}$. We denote by $Y_\lambda \coloneqq \bigsqcup_{i} Y_{L_i}\subseteq Y_N$ the  union of these subsets. We have $Y_\lambda \cap Y_\mu = Y_{\lambda \wedge\mu}$ for all $\lambda,\mu\in \Part^w_N$, and $Y_\lambda \subseteq Y_\mu$ for all $\lambda \leq \mu\in \Part^w_N$. 

When $Y=\id$ we have $Y_\lambda=Y_N$ for every $\lambda$ and every $N$, but in general $Y_\lambda \subsetneq Y_N$. The functoriality of $Y$ ensures that for each bijection $w:N\to M$ and each $\lambda \in \Part^w_N$ we have $wY_\lambda = Y_{w\lambda}$ as subsets of $Y_M$. In particular, $Y_\lambda$ is $S_\lambda$-invariant.

We now have subgroups $S_\lambda \subseteq S_N$ and $\Map(Y_\lambda,H) \subseteq \Map(Y_N,H)$. We  take the semidirect product of these groups to obtain a subgroup $G_\lambda\subseteq G_N$:
\[
G_\lambda(Y,H) \coloneqq S_\lambda \ltimes \Map(Y_\lambda,H).
\]
We have a canonical isomorphism of groups $\prod_i G_{L_i}\cong G_\lambda$, coming from the corresponding isomorphisms $\prod_i S_{L_i}\cong S_\lambda$ and $\prod_i \Map(Y_{L_i},H) \cong \Map( \sqcup_i Y_{L_i}, H) = \Map(Y_\lambda,H)$.

Let us list some properties of the groups $G_\lambda$; all of these are immediate consequences of the corresponding facts about the groups $S_\lambda$ and the sets $Y_\lambda$.

\begin{lemma}\label{lem:G-properties} Let $N$ and $M$ be finite sets.
\begin{enumerate}[\rm(1)]

\item If $\lambda \leq \mu\in \Part^w_N$ then $G_\lambda \subseteq G_\mu$.
\item For all $\lambda,\mu\in \Part^w_N$ we have $G_\lambda \cap G_\mu = G_{\lambda \wedge \mu}$.
\item For each bijective map $w:N\to M$ and each $\lambda \in \Part^w_N$ we have ${}^wG_\lambda = G_{w\lambda}$.\hfill\qed
\end{enumerate}
\end{lemma}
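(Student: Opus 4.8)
The plan is to prove Lemma \ref{lem:G-properties} by reducing each of the three statements to facts about the symmetric-group pieces $S_\lambda$ and the set-theoretic pieces $Y_\lambda$ that were already established in the preceding discussion, together with the elementary behaviour of semidirect products $S_\bullet \ltimes \Map(Y_\bullet, H)$ with respect to inclusions, intersections, and conjugation. The key observation throughout is that $G_\lambda = S_\lambda \ltimes \Map(Y_\lambda, H)$ is built compatibly from $S_\lambda \subseteq S_N$ and $Y_\lambda \subseteq Y_N$, so structural relations among the $S_\lambda$ and $Y_\lambda$ lift directly.

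For part (1): given $\lambda \le \mu$, the text has already noted that $S_\lambda \subseteq S_\mu$ and $Y_\lambda \subseteq Y_\mu$. Under an inclusion of subsets $Y_\lambda \subseteq Y_\mu$ of $Y_N$, the group $\Map(Y_\lambda,H)$ sits inside $\Map(Y_\mu,H)$ (as functions supported on the smaller set), and $Y_\lambda$ is $S_\lambda$-invariant; so by the general remark recorded in Section \ref{sec:Young} on when $W'\ltimes \Map(Y',H) \subseteq W\ltimes\Map(Y,H)$, we get $G_\lambda \subseteq G_\mu$. For part (2): I would compute the intersection inside $G_N = S_N \ltimes \Map(Y_N,H)$ directly. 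An element $(w,f)$ lies in $G_\lambda \cap G_\mu$ iff $w \in S_\lambda \cap S_\mu$ and $\supp(f) \subseteq Y_\lambda \cap Y_\mu$. By the already-quoted identities $S_\lambda \cap S_\mu = S_{\lambda\wedge\mu}$ and $Y_\lambda \cap Y_\mu = Y_{\lambda\wedge\mu}$, this says exactly $(w,f) \in G_{\lambda\wedge\mu}$. For part (3): conjugation by $(w,1)$ in $G_N$ sends $S_\lambda$ to ${}^wS_\lambda = S_{w\lambda}$ (stated above) and sends $\Map(Y_\lambda,H)$ to $\Map(wY_\lambda, H) = \Map(Y_{w\lambda},H)$, using the functoriality identity $wY_\lambda = Y_{w\lambda}$; since conjugation is a group automorphism it carries the semidirect product $S_\lambda \ltimes \Map(Y_\lambda,H)$ onto $S_{w\lambda} \ltimes \Map(Y_{w\lambda},H) = G_{w\lambda}$.

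Honestly, there is no real obstacle here — the lemma is explicitly flagged in the text as ``immediate consequences of the corresponding facts about the groups $S_\lambda$ and the sets $Y_\lambda$,'' and every ingredient (the inclusion criterion for wreath subgroups, the identities $S_{\lambda\wedge\mu} = S_\lambda\cap S_\mu$, $Y_{\lambda\wedge\mu} = Y_\lambda\cap Y_\mu$, ${}^wS_\lambda = S_{w\lambda}$, $wY_\lambda = Y_{w\lambda}$) has already been recorded. The only thing requiring a modicum of care is bookkeeping with the semidirect-product multiplication: when I write that conjugation by $(w,1)$ ``sends $\Map(Y_\lambda,H)$ to $\Map(wY_\lambda,H)$'' I should check the sign conventions in $(w_1,f_1)(w_2,f_2) = (w_1w_2, (w_2^{-1}f_1)f_2)$ and confirm that $(w,1)(1,f)(w,1)^{-1} = (1, wf)$ with $wf$ supported on $wY_\lambda$, which is routine. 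So the write-up will be short: state the three reductions, invoke the listed facts, and handle the semidirect-product conjugation computation explicitly once.
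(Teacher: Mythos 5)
Your proposal is correct and matches the paper's (unwritten) argument exactly: the paper dismisses the lemma as an immediate consequence of the corresponding facts about $S_\lambda$ and $Y_\lambda$, which is precisely the reduction you carry out, including the correct conjugation computation $(w,1)(1,f)(w,1)^{-1}=(1,wf)$ in the paper's semidirect-product convention. The only cosmetic point is that for a bijection $w:N\to M$ with $N\neq M$ the ``conjugation by $(w,1)$'' should be read as the isomorphism $G_N\to G_M$ induced by $w$ (just as the paper reads ${}^wS_\lambda=wS_\lambda w^{-1}\subset S_M$), which changes nothing in your computation.
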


For each $\lambda \leq \mu \in \Part^w_N$ we consider the following subgroups of $G_\mu$:
\[
P_\lambda^\mu \coloneqq S_\lambda \ltimes \Map(Y_{\mu}, H )\qquad\text{and}\qquad U_\lambda^\mu \coloneqq \Map({Y_\mu \setminus Y_\lambda},H).
\]
Let us list some properties of these groups; all of these follow easily from the definitions.

\begin{lemma}\label{lem:P-U-properties}
Let $N$ and $M$ be finite sets.
\begin{enumerate}[\rm(1)]
\item If $\lambda \leq \mu$ in $\Part^w_N$ then  the group  $U_\lambda^\mu$ is normalised by $G_\lambda$, and the map  $G_\lambda \times U_\lambda^\mu  \to P_\lambda^\mu$ given by multiplication in the group $G_\mu$ is a bijection; thus $P_\lambda^\mu$ is the internal semidirect product $G_\lambda \ltimes U_\lambda^\mu$. 
\item If $w:N\to M$ is a bijection of sets then for all $\lambda\leq \mu\in \Part^w_N$ we have ${}^wP_\lambda^\mu = P_{w\lambda}^{w\mu}$ and ${}^wU_\lambda^\mu= U_{w\lambda}^{w\mu}$.
\item For all $\lambda,\mu\in \Part^w_N$ we have $G_\lambda \cap U_\mu^N = U_{\lambda \wedge\mu}^\lambda$.\hfill\qed
\end{enumerate}
\end{lemma}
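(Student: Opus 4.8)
The plan is to handle the three parts in turn; each reduces to elementary bookkeeping about supports of functions in $\Map(Y_N,H)$ together with the facts about the subsets $Y_\lambda\subseteq Y_N$ already recorded in this subsection --- namely $Y_\lambda\subseteq Y_\mu$ for $\lambda\leq\mu$, $Y_\lambda\cap Y_\mu=Y_{\lambda\wedge\mu}$, $wY_\lambda=Y_{w\lambda}$, and the $S_\lambda$-invariance of $Y_\lambda$ --- and the analogous facts about the subgroups $S_\lambda\subseteq S_N$.

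For (1): I would first check that $Y_\mu\setminus Y_\lambda$ is $S_\lambda$-invariant, since $Y_\lambda$ is $S_\lambda$-invariant and $Y_\mu$ is $S_\lambda$-invariant (because $S_\lambda\subseteq S_\mu$ and $Y_\mu$ is $S_\mu$-invariant); hence the $S_\lambda$-action on $\Map(Y_\mu,H)$ preserves $U_\lambda^\mu=\Map(Y_\mu\setminus Y_\lambda,H)$. Since moreover $\Map(Y_\lambda,H)$ commutes elementwise with $\Map(Y_\mu\setminus Y_\lambda,H)$ (disjoint supports), the subgroup $\Map(Y_\lambda,H)$ centralises $U_\lambda^\mu$; together these show $G_\lambda=S_\lambda\ltimes\Map(Y_\lambda,H)$ normalises $U_\lambda^\mu$. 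For the bijection $G_\lambda\times U_\lambda^\mu\to P_\lambda^\mu$, I would write a general element of $P_\lambda^\mu$ as $(\sigma,h)$ with $\sigma\in S_\lambda$, $h\in\Map(Y_\mu,H)$, and use the decomposition $Y_\mu=Y_\lambda\sqcup(Y_\mu\setminus Y_\lambda)$ to factor $h=h_\lambda h'$ uniquely with $\supp(h_\lambda)\subseteq Y_\lambda$ and $\supp(h')\subseteq Y_\mu\setminus Y_\lambda$; the semidirect-product multiplication formula then gives $(\sigma,h)=(\sigma,h_\lambda)(1,h')$ with $(\sigma,h_\lambda)\in G_\lambda$ and $(1,h')\in U_\lambda^\mu$, so the map is surjective, and it is injective by uniqueness of the factorisation of $h$. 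As $G_\lambda\cap U_\lambda^\mu=\{1\}$ (disjoint supports once more), $P_\lambda^\mu$ is the internal semidirect product $G_\lambda\ltimes U_\lambda^\mu$.

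For (2): after noting that $w\lambda\leq w\mu$, so the right-hand sides are defined, I would use that conjugation by $w$ is the isomorphism $G_N\xrightarrow{\cong}G_M$ induced by the functoriality of $Y$, carrying $S_\lambda$ to $S_{w\lambda}$ (already proved) and $\Map(K,H)$ to $\Map(wK,H)$ for any subset $K\subseteq Y_N$. Taking $K=Y_\mu$ gives ${}^wP_\lambda^\mu=S_{w\lambda}\ltimes\Map(Y_{w\mu},H)=P_{w\lambda}^{w\mu}$; taking $K=Y_\mu\setminus Y_\lambda$ and using $w(Y_\mu\setminus Y_\lambda)=Y_{w\mu}\setminus Y_{w\lambda}$ gives ${}^wU_\lambda^\mu=U_{w\lambda}^{w\mu}$. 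For (3), where the superscript $N$ means the one-block partition $\{N\}$, so that $U_\mu^N=\Map(Y_N\setminus Y_\mu,H)$: an element of $G_\lambda\cap U_\mu^N$ has trivial symmetric-group component and support contained in $Y_\lambda$ (membership in $G_\lambda$) and in $Y_N\setminus Y_\mu$ (membership in $U_\mu^N$), hence in $Y_\lambda\setminus(Y_\lambda\cap Y_\mu)=Y_\lambda\setminus Y_{\lambda\wedge\mu}$; conversely every function supported there lies in both subgroups, so $G_\lambda\cap U_\mu^N=\Map(Y_\lambda\setminus Y_{\lambda\wedge\mu},H)=U_{\lambda\wedge\mu}^\lambda$.

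I do not expect any genuine obstacle: as the paper indicates, everything follows directly from the definitions. The only points needing care are tracking which subsets of $Y_N$ are invariant under which subgroups, handling the twist in $(\sigma_1,f_1)(\sigma_2,f_2)=(\sigma_1\sigma_2,(\sigma_2^{-1}f_1)f_2)$ correctly when verifying the factorisation in (1), and fixing the convention in (3) that the superscript $N$ denotes the top weak partition $\{N\}$.
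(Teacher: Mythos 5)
Your proof is correct: the paper states this lemma without proof, remarking only that all three parts follow easily from the definitions, and your verification supplies exactly those routine details (invariance of $Y_\mu\setminus Y_\lambda$ under $S_\lambda$, the unique factorisation of functions over $Y_\mu=Y_\lambda\sqcup(Y_\mu\setminus Y_\lambda)$, conjugation via $wY_\lambda=Y_{w\lambda}$, and the support computation $Y_\lambda\cap(Y_N\setminus Y_\mu)=Y_\lambda\setminus Y_{\lambda\wedge\mu}$). Nothing in your argument deviates from the intended elementary approach, and the points you flag for care (the twisted multiplication and the convention that the superscript $N$ means the one-block partition) are handled correctly.
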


We next recall some terminology from \cite{Zelevinsky}: for each $\lambda \in \Part^w_N$ we say that a subgroup $G\subseteq G_N$ is \emph{decomposable} with respect to $(G_\lambda,U_\lambda^N)$ if the intersection $P_\lambda^N\cap G$ decomposes as the semidirect product $(G_\lambda\cap G)\ltimes ( U_\lambda^N\cap G)$. 

\begin{lemma}\label{lem:decomposability}
Let $\lambda,\mu\in \Part^w_N$ be weak partitions of a finite set $N$.
 Each of the subgroups $P_\mu^N$, $G_\mu$, and $U_\mu^N$  of $G_N$ is decomposable with respect to $(G_\lambda,U_\lambda^N)$.
\end{lemma}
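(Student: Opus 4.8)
The plan is to verify the three decomposability claims by checking, in each case, that the relevant intersection with $P_\lambda^N$ splits as the product of the intersections with $G_\lambda$ and $U_\lambda^N$. Since $P_\lambda^N = G_\lambda \ltimes U_\lambda^N$ by Lemma \ref{lem:P-U-properties}(1), for any subgroup $G\subseteq G_N$ one always has the inclusion $(G_\lambda\cap G)\ltimes(U_\lambda^N\cap G)\subseteq P_\lambda^N\cap G$ (the left side is a subset and the product map is injective because $G_\lambda\cap U_\lambda^N=1$); the content is the reverse inclusion $P_\lambda^N\cap G\subseteq (G_\lambda\cap G)(U_\lambda^N\cap G)$. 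So in each of the three cases I would take an element $x\in P_\lambda^N\cap G$, write it uniquely as $x = g u$ with $g\in G_\lambda$, $u\in U_\lambda^N$ (using the semidirect product decomposition of $P_\lambda^N$), and show that in fact $g\in G$ and $u\in G$.

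First, the case $G = G_\mu$. Here $P_\lambda^N\cap G_\mu$ consists of pairs $(w,f)$ with $w\in S_\lambda$ (hence $w\in S_\mu$ automatically since $\lambda\le\mu$ is not assumed — wait, $\lambda$ and $\mu$ are merely both in $\Part^w_N$), $\supp f\subseteq Y_N$, and $(w,f)\in G_\mu = S_\mu\ltimes\Map(Y_\mu,H)$. So the condition is $w\in S_\lambda\cap S_\mu = S_{\lambda\wedge\mu}$ and $\supp f\subseteq Y_\mu$. The decomposition of $(w,f)\in P_\lambda^N$ as an element of $G_\lambda\ltimes U_\lambda^N$ separates $f$ according to $\supp f\cap Y_\lambda$ (the $G_\lambda$-part) and $\supp f\cap(Y_N\setminus Y_\lambda)$ (the $U_\lambda^N$-part). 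One checks that if $\supp f\subseteq Y_\mu$ then each piece has support in $Y_\mu\cap Y_\lambda = Y_{\lambda\wedge\mu}\subseteq Y_\mu$ respectively $Y_\mu\setminus Y_\lambda$, so both pieces lie in $G_\mu$; and $w\in S_{\lambda\wedge\mu}$ already lies in $G_\mu$. The cases $G = U_\mu^N = \Map(Y_N\setminus Y_\mu,H)$ and $G = P_\mu^N = S_\mu\ltimes\Map(Y_N,H)$ are handled the same way: for $U_\mu^N$ the element has trivial $S_N$-part and support in $Y_N\setminus Y_\mu$, and splitting the support along $Y_\lambda$ versus its complement preserves the condition $\supp\subseteq Y_N\setminus Y_\mu$; for $P_\mu^N$ one needs $w\in S_\lambda\cap S_\mu$ and no constraint on $\supp f$, so again the split pieces stay in $P_\mu^N$. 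The key set-theoretic facts being used throughout are $Y_\lambda\cap Y_\mu = Y_{\lambda\wedge\mu}$ and the disjoint decomposition $Y_N = Y_\lambda\sqcup(Y_N\setminus Y_\lambda)$, both already established.

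I expect the main (minor) obstacle to be purely bookkeeping: making the splitting of a function $f\in\Map(Y_N,H)$ into its restriction to $Y_\lambda$ and to $Y_N\setminus Y_\lambda$ interact cleanly with the semidirect-product coordinates, and checking that $S_\lambda$ normalizing $U_\lambda^N$ does not disturb the support conditions. Concretely, for $(w,f)\in P_\lambda^N$ with $w\in S_\lambda$, writing $f = f_0 f_1$ with $\supp f_0\subseteq Y_\lambda$ and $\supp f_1\subseteq Y_N\setminus Y_\lambda$, the decomposition into $G_\lambda\ltimes U_\lambda^N$ reads $(w,f) = (w,f_0)\cdot(1,(w\cdot{}?)\,f_1)$ up to the twist in the semidirect product law, and since $Y_\lambda$ and $Y_N\setminus Y_\lambda$ are both $S_\lambda$-invariant the twisting by $w\in S_\lambda$ does not move support between the two halves. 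Once this is set up, the three decomposability statements fall out by inspecting which of the two halves, together with $w$, satisfy the defining condition of $G_\mu$, $U_\mu^N$, or $P_\mu^N$ — and in all three cases they do, because those defining conditions are each of the form "$w$ lies in a Young subgroup" and/or "the support avoids/lies in a set of the form $Y_\nu$", conditions that are respected by intersecting the support with $Y_\lambda$ or its complement. The whole argument is a direct unwinding of the definitions together with Lemmas \ref{lem:G-properties} and \ref{lem:P-U-properties}, so I would present it compactly rather than belabor the routine verifications.
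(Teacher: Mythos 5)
Your proposal is correct and follows essentially the same route as the paper: the paper simply records the element-wise splitting you describe as a chain of subgroup identities, computing $P_\lambda^N\cap P_\mu^N$, $P_\lambda^N\cap G_\mu$, and $P_\lambda^N\cap U_\mu^N$ explicitly and splitting the function part along $Y_\lambda$ versus its complement, using $Y_\lambda\cap Y_\mu=Y_{\lambda\wedge\mu}$ and $S_\lambda\cap S_\mu=S_{\lambda\wedge\mu}$ exactly as you do. The bookkeeping you worry about is harmless, since $(w,f_0)(1,f_1)=(w,f_0f_1)$ in the semidirect product, so no twist actually intervenes.
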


\begin{proof}
Compute as follows:
\[
\begin{aligned}
P_\lambda^N \cap P_\mu^N   =  (S_\lambda \cap S_\mu) \ltimes \Map({Y_N},H)   & = \left(S_{\lambda\wedge\mu}\ltimes \Map({Y_\lambda},H) \right)\ltimes \Map({Y_N\setminus Y_\lambda},H) \\ & = (G_\lambda\cap P_\mu^N)\ltimes (U_\lambda^N\cap P_\mu^N),
\end{aligned}
\]
\[
\begin{aligned}
 P_\lambda^N\cap G_\mu  = (S_\lambda \cap S_\mu) \ltimes \Map({Y_\mu},H)  & = \left( S_{\lambda\wedge\mu}\ltimes \Map({Y_{\lambda\wedge\mu}},H)\right)\ltimes \Map({Y_\mu\setminus Y_{\lambda\wedge\mu}},H) \\ &  = (G_\lambda \cap G_\mu) \ltimes (U_\lambda^N \cap G_\mu),
 \end{aligned}
 \]
 and
 \[
 \begin{aligned}
 P_\lambda^N\cap U_\mu^N = U_\mu^N = \Map({Y_N\setminus Y_\mu},H) & = \Map({Y_\lambda \setminus Y_{\lambda\wedge\mu}},H) \times \Map({Y_N \setminus (Y_\lambda \cup Y_\mu)},H) \\ &= (G_\lambda \cap U_\mu^N) \ltimes (U_\lambda^N\cap U_\mu^N). &\qedhere
\end{aligned}
\]
\end{proof}

\subsection{Induction and restriction functors}

We continue to fix a Young set $Y$ and a finite group $H$, and write $G_N$ for the wreath product $G_N(Y,H)=S_N\ltimes \Map({Y_N},H)$. We also fix an algebraically closed field $\k$ of characteristic zero.  For each finite group $G$ we let $\Rep(G)$ denote the category whose objects are the finite-dimensional $\k$-linear representations of $G$, and whose morphisms are the $G$-equivariant linear maps. 

\begin{definition} 
For all ordered pairs of weak partitions $\lambda\leq \mu\in \Part^w_N$ we consider the following functors:
\begin{enumerate}[\rm(1)]
\item Let
$
\pind_\lambda^\mu : \Rep(G_\lambda) \to \Rep(G_\mu)
$
be the functor of inflation from $G_\lambda$ to $P_\lambda^\mu$ (i.e., let $U_\lambda^\mu$ act trivially), followed by induction from $P_\lambda^\mu$ to $G_\mu$. 
\item Let 
$
\pres^\mu_\lambda: \Rep(G_\mu) \to \Rep(G_\lambda)
$
be the functor that sends each $G_\mu$-representation $V$ to the $G_\lambda$-invariant subspace $V^{U_\lambda^\mu}$ of $U_\lambda^\mu$-fixed vectors.
\item Let
$
\res^\mu_\lambda: \Rep(G_\mu)\to \Rep(G_\lambda)
$
be the usual restriction functor.
\end{enumerate}
\end{definition}

For the Young set $Y_N=N$ the groups $U_\lambda^\mu$ are trivial, and so there is no difference between the functors $\pres^\mu_\lambda$ and $\res^\mu_\lambda$. In general $\pres_\lambda^\mu$ is a subfunctor of $\res^\mu_\lambda$, and the inclusion can be proper.

\begin{lemma}\label{lem:adjoints}
The functors $\pind_\lambda^\mu$ and $\pres_\lambda^\mu$ are two-sided adjoints to one another.
\end{lemma}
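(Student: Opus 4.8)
The plan is to prove the two adjunctions separately, exploiting the fact that $\pind_\lambda^\mu$ is a composite of an inflation functor and an ordinary induction functor, and using the decomposition $P_\lambda^\mu = G_\lambda \ltimes U_\lambda^\mu$ from Lemma \ref{lem:P-U-properties}(1). Write $\operatorname{Inf}\colon \Rep(G_\lambda)\to\Rep(P_\lambda^\mu)$ for inflation along the quotient $P_\lambda^\mu\onto G_\lambda$ (kill $U_\lambda^\mu$), so that $\pind_\lambda^\mu = \Ind_{P_\lambda^\mu}^{G_\mu}\circ\operatorname{Inf}$. Dually, the $U_\lambda^\mu$-fixed-point functor $\Rep(P_\lambda^\mu)\to\Rep(G_\lambda)$, $V\mapsto V^{U_\lambda^\mu}$, is right adjoint to $\operatorname{Inf}$; and since $U_\lambda^\mu$ is a normal subgroup in characteristic zero, the fixed-point functor agrees with the coinvariants functor $V\mapsto V_{U_\lambda^\mu}$ (average over $U_\lambda^\mu$ gives a natural projection $V\onto V^{U_\lambda^\mu}$ splitting the inclusion, and this is $P_\lambda^\mu$-equivariant), so $V\mapsto V^{U_\lambda^\mu}$ is also left adjoint to $\operatorname{Inf}$. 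Thus $\operatorname{Inf}$ and the fixed-point functor form a two-sided adjoint pair.

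First I would establish $\pind_\lambda^\mu \dashv \pres_\lambda^\mu$ on one side. On the induction side, $\Ind_{P}^{G}\dashv \Res_{P}^{G}$ (Frobenius reciprocity, with $\Ind = \Hom_{\k P}(\k G, -)$ in char zero so it is also a left adjoint — but only the left-adjoint property is needed here), and composing with $\operatorname{Inf}\dashv (-)^{U}$ gives
\[
\Hom_{G_\mu}\big(\pind_\lambda^\mu V, W\big) \;\cong\; \Hom_{P_\lambda^\mu}\big(\operatorname{Inf} V,\, \Res_{P_\lambda^\mu}^{G_\mu} W\big) \;\cong\; \Hom_{G_\lambda}\big(V,\, (\Res^{G_\mu}_{P_\lambda^\mu}W)^{U_\lambda^\mu}\big),
\]
and $(\Res^{G_\mu}_{P_\lambda^\mu}W)^{U_\lambda^\mu} = W^{U_\lambda^\mu} = \pres_\lambda^\mu W$ by definition. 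This shows $\pind_\lambda^\mu$ is left adjoint to $\pres_\lambda^\mu$. For the other side, use that in characteristic zero $\Ind_P^G$ is also right adjoint to $\Res_P^G$ (the group algebra $\k G$ is free of finite rank over $\k P$, so $\Ind\cong\operatorname{coInd}$), together with the left-adjoint property $(-)^{U}\dashv\operatorname{Inf}$ noted above:
\[
\Hom_{G_\lambda}\big(\pres_\lambda^\mu W, V\big) = \Hom_{G_\lambda}\big(W^{U_\lambda^\mu}, V\big) \cong \Hom_{P_\lambda^\mu}\big(\Res^{G_\mu}_{P_\lambda^\mu}W,\, \operatorname{Inf} V\big) \cong \Hom_{G_\mu}\big(W,\, \Ind_{P_\lambda^\mu}^{G_\mu}\operatorname{Inf} V\big) = \Hom_{G_\mu}\big(W, \pind_\lambda^\mu V\big).
\]
Hence $\pres_\lambda^\mu$ is left adjoint to $\pind_\lambda^\mu$ as well, completing the proof.

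The only genuine content beyond bookkeeping is the claim that the $U_\lambda^\mu$-fixed-point functor is a two-sided adjoint of inflation, which I expect to be the main point to get right; it rests squarely on $\operatorname{char}\k = 0$ (so $\#U_\lambda^\mu$ is invertible), used twice — once to split $V^{U_\lambda^\mu}\into V$ functorially and identify invariants with coinvariants, and once to identify $\Ind$ with $\operatorname{coInd}$. Everything else is the standard Frobenius reciprocity and the transitivity of composing adjunctions. I would state the inflation/fixed-point adjunction as a short preliminary observation (or cite it) and then present the two displayed chains of natural isomorphisms as above; care is needed only in checking that each isomorphism is natural in both variables, which is routine since each constituent adjunction is.
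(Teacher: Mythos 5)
Your proposal is correct and is exactly the paper's argument: the paper's proof simply notes that, over a field of characteristic zero, induction from $P_\lambda^\mu$ to $G_\mu$ is a two-sided adjoint of restriction and inflation from $G_\lambda$ is a two-sided adjoint of the $U_\lambda^\mu$-invariants functor, and composes these; your write-up just makes the standard averaging and $\Ind\cong\operatorname{coInd}$ justifications explicit.
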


\begin{proof}
Since the groups in question are all finite and $\k$ has characteristic zero, induction from $P_\lambda^\mu$ to $G_\mu$ is a two-sided adjoint to restriction from $G_\mu$ to $P_\lambda^\mu$, while inflation from $G_\lambda$ to $P_\lambda^\mu$ is a two-sided adjoint to the functor of $U_\lambda^\mu$-invariants.
\end{proof}

\begin{lemma} \label{lem:transitivity}
For each ordered triple $\lambda \leq \mu \leq \nu \in \Part^w_N$ there are natural isomorphisms of functors
\[
\pind_\lambda^\nu \cong \pind_\mu^\nu \pind_\lambda^\mu \qquad \text{and}\qquad \pres_\lambda^\nu \cong \pres^\mu_\lambda \pres^\nu_\mu.
\]
\end{lemma}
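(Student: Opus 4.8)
The plan is to prove the transitivity of $\pind$ directly, from transitivity of ordinary induction together with the compatibility of induction with inflation, and then to deduce the transitivity of $\pres$ formally by passing to adjoints via Lemma~\ref{lem:adjoints}.

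For the first isomorphism, note that $\lambda\leq\mu\leq\nu$ gives a chain of subgroups $P_\lambda^\nu = S_\lambda\ltimes\Map(Y_\nu,H)\subseteq S_\mu\ltimes\Map(Y_\nu,H)=P_\mu^\nu\subseteq G_\nu$, so transitivity of induction gives $\Ind_{P_\lambda^\nu}^{G_\nu}\cong\Ind_{P_\mu^\nu}^{G_\nu}\Ind_{P_\lambda^\nu}^{P_\mu^\nu}$. I would then record the following elementary facts about $U_\mu^\nu=\Map(Y_\nu\setminus Y_\mu,H)$, all immediate from $Y_\lambda\subseteq Y_\mu\subseteq Y_\nu$ and the defining identity $Y_K\cap Y_L=Y_{K\cap L}$: the subgroup $U_\mu^\nu$ lies in $\Map(Y_\nu,H)\subseteq P_\lambda^\nu$; it is normal in $P_\mu^\nu$ with quotient $G_\mu$ and normal in $P_\lambda^\nu$ with quotient $P_\lambda^\mu$; the quotient map $P_\mu^\nu\to G_\mu$ restricts on $P_\lambda^\nu$ to the quotient map $P_\lambda^\nu\to P_\lambda^\mu$ (composed with $P_\lambda^\mu\subseteq G_\mu$); and the preimage of $U_\lambda^\mu$ under $P_\lambda^\nu\to P_\lambda^\mu$ is exactly $U_\lambda^\nu$, whence $\operatorname{infl}_{G_\lambda}^{P_\lambda^\nu}\cong\operatorname{infl}_{P_\lambda^\mu}^{P_\lambda^\nu}\operatorname{infl}_{G_\lambda}^{P_\lambda^\mu}$. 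The one genuinely representation-theoretic ingredient is the standard fact that induction commutes with inflation along a normal subgroup contained in the given subgroup: if $N\trianglelefteq G$ and $N\subseteq K\subseteq G$, then for every representation $W$ of $K/N$ there is a natural isomorphism $\Ind_K^G\operatorname{infl}_{K/N}^{K}W\cong\operatorname{infl}_{G/N}^{G}\Ind_{K/N}^{G/N}W$ (one checks that $N$ acts trivially on the left-hand side and compares with the function-space model of induction). Applying this with $G=P_\mu^\nu$, $N=U_\mu^\nu$, $K=P_\lambda^\nu$, and $W=\operatorname{infl}_{G_\lambda}^{P_\lambda^\mu}V$, then applying $\Ind_{P_\mu^\nu}^{G_\nu}$ and using the two transitivity facts above, chains together to give $\pind_\lambda^\nu V=\Ind_{P_\lambda^\nu}^{G_\nu}\operatorname{infl}_{G_\lambda}^{P_\lambda^\nu}V\cong\Ind_{P_\mu^\nu}^{G_\nu}\operatorname{infl}_{G_\mu}^{P_\mu^\nu}\big(\Ind_{P_\lambda^\mu}^{G_\mu}\operatorname{infl}_{G_\lambda}^{P_\lambda^\mu}V\big)=\pind_\mu^\nu\pind_\lambda^\mu V$, naturally in $V$ since every isomorphism used is natural. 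I expect the only real obstacle to be keeping this bookkeeping of nested normal subgroups straight; no single step is deep.

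For the second isomorphism I would invoke Lemma~\ref{lem:adjoints}: $\pres_\lambda^\mu$ is a two-sided adjoint of $\pind_\lambda^\mu$ for each $\lambda\leq\mu$. Taking right adjoints of the natural isomorphism $\pind_\lambda^\nu\cong\pind_\mu^\nu\pind_\lambda^\mu$, and using that the right adjoint of a composite is the composite of the right adjoints in the reverse order, gives $\pres_\lambda^\nu\cong\pres_\lambda^\mu\pres_\mu^\nu$. (If one prefers a direct argument: the subgroups $U_\mu^\nu$ and $U_\lambda^\mu$ of $\Map(Y_\nu,H)$ commute and together generate $U_\lambda^\nu$, and $G_\lambda$ acts on $\pres_\mu^\nu V=V^{U_\mu^\nu}$ compatibly with its direct action on $V$, so that $\pres_\lambda^\mu\pres_\mu^\nu V=(V^{U_\mu^\nu})^{U_\lambda^\mu}=V^{U_\lambda^\nu}=\pres_\lambda^\nu V$ as $G_\lambda$-representations.)
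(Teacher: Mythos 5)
Your proof is correct, but it runs in the opposite direction from the paper's. The paper proves the $\pres$ half directly and in one line: the set decomposition $Y_\nu\setminus Y_\lambda=(Y_\nu\setminus Y_\mu)\sqcup(Y_\mu\setminus Y_\lambda)$ gives an internal direct product $U_\lambda^\nu=U_\mu^\nu\times U_\lambda^\mu$, so $\pres^\mu_\lambda\pres^\nu_\mu(V)=\bigl(V^{U_\mu^\nu}\bigr)^{U_\lambda^\mu}=V^{U_\lambda^\nu}=\pres^\nu_\lambda(V)$ as equal subspaces of $V$ (exactly your parenthetical remark); the $\pind$ half is then obtained for free from Lemma \ref{lem:adjoints} by uniqueness of adjoints. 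You instead prove the $\pind$ half directly — chaining transitivity of induction along $P_\lambda^\nu\subseteq P_\mu^\nu\subseteq G_\nu$, transitivity of inflation, and the standard commutation $\Ind_K^G\operatorname{infl}_{K/N}^K\cong\operatorname{infl}_{G/N}^G\Ind_{K/N}^{G/N}$ applied with $G=P_\mu^\nu$, $N=U_\mu^\nu$, $K=P_\lambda^\nu$ — and then transfer to $\pres$ by taking adjoints. All the group-theoretic bookkeeping you record ($U_\mu^\nu\trianglelefteq P_\mu^\nu$ with quotient $G_\mu$, the preimage of $U_\lambda^\mu$ being $U_\lambda^\nu$, etc.) is accurate, so the argument goes through; the trade-off is that your route requires the extra induction--inflation lemma and more careful tracking of nested quotients, in exchange for an explicit isomorphism on the $\pind$ side that does not pass through uniqueness of adjoints, whereas the paper's route exploits the fact that the $\pres$ side is literally an equality of invariant subspaces and is correspondingly shorter.
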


\begin{proof}
By the uniqueness of adjoint functors it will suffice to prove the assertion about $\pres^\nu_\lambda$. For each representation $V$ of $G_\nu$ we have $\pres^\nu_\lambda(V)= V^{U^\nu_\lambda}$, while $\pres^\mu_\lambda \pres^\nu_\mu (V)= \left( V^{U^\nu_\mu}\right)^{U^\mu_\lambda}$. The decomposition of sets $Y_\nu \setminus Y_\lambda = (Y_\nu \setminus Y_\mu) \sqcup (Y_\mu\setminus Y_\lambda)$ leads to an internal direct-product decomposition of groups $U^\nu_\lambda = U^\nu_\mu \times U^\mu_\lambda$, showing that $\pres^\nu_\lambda(V)$ and $\pres^\mu_\lambda\pres^\nu_\mu(V)$ are in fact equal as $G_\lambda$-invariant subspaces of $V$.
\end{proof}

For each bijection of sets $w:N\to M$ we have an equivalence 
\begin{equation}\label{eq:Adw-R}
\Ad_w:\Rep(G_N)\xrightarrow{\rho\mapsto \rho(w^{-1}\cdot w)} \Rep(G_M),
\end{equation}
where $\rho(w^{-1} \cdot w)$ means the map $g \mapsto \rho(w^{-1} g w)$.

\begin{lemma}\label{lem:w-i-r}
For each bijection of sets $w:N\to M$ and each ordered pair of weak partitions $\lambda \leq \mu \in \Part^w_N$ we have natural isomorphisms of functors
\[
\Ad_w \pind_\lambda^\mu \cong \pind_{w\lambda}^{w\mu} \Ad_w,\quad \Ad_w\pres^\mu_\lambda \cong \pres^{w\mu}_{w\lambda} \Ad_w,\quad \text{and}\quad \Ad_w \res^\mu_\lambda \cong \res^{w\mu}_{w\lambda} \Ad_w.
\]
\end{lemma}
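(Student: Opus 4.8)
The plan is to recognise all three claimed isomorphisms as instances of transport of structure along the conjugation equivalence $\Ad_w$ of \eqref{eq:Adw-R}, using the compatibility of $w$-conjugation with the subgroups $G_\lambda$, $P_\lambda^\mu$, and $U_\lambda^\mu$ recorded in Lemmas \ref{lem:G-properties}(3) and \ref{lem:P-U-properties}(2). In fact the first two isomorphisms will turn out to be equalities of functors, and only the $\pind$ statement requires a slightly indirect argument.

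I would begin with the assertion about $\res$, which is immediate: both $\Ad_w\res^\mu_\lambda V$ and $\res^{w\mu}_{w\lambda}\Ad_w V$ have underlying vector space $V$, and on each of them an element $g\in G_{w\lambda}={}^wG_\lambda$ acts through $w^{-1}gw\in G_\lambda\subseteq G_\mu$; so the two functors are literally equal. For $\pres$, recall that $\pres^{w\mu}_{w\lambda}\Ad_w V=(\Ad_w V)^{U_{w\lambda}^{w\mu}}$; since $U_{w\lambda}^{w\mu}={}^wU_\lambda^\mu$, a vector $v\in V$ is fixed by $U_{w\lambda}^{w\mu}$ in $\Ad_w V$ exactly when it is fixed by $w^{-1}\bigl({}^wU_\lambda^\mu\bigr)w=U_\lambda^\mu$ in $V$. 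Thus $(\Ad_w V)^{U_{w\lambda}^{w\mu}}=V^{U_\lambda^\mu}$ as subspaces, and the residual $G_{w\lambda}$-action is precisely $\Ad_w$ applied to the $G_\lambda$-action on $V^{U_\lambda^\mu}=\pres^\mu_\lambda V$; so again the two functors coincide.

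For the isomorphism involving $\pind$ I would invoke uniqueness of adjoints, as in the proof of Lemma \ref{lem:transitivity}, rather than computing with induced representations. Since $\Ad_w$ is an equivalence with two-sided adjoint $\Ad_{w^{-1}}$, Lemma \ref{lem:adjoints} gives that $\Ad_w\pind_\lambda^\mu$ is left adjoint to $\pres^\mu_\lambda\Ad_{w^{-1}}$ while $\pind_{w\lambda}^{w\mu}\Ad_w$ is left adjoint to $\Ad_{w^{-1}}\pres^{w\mu}_{w\lambda}$. Pre- and post-composing the $\pres$-identity $\Ad_w\pres^\mu_\lambda\cong\pres^{w\mu}_{w\lambda}\Ad_w$ with $\Ad_{w^{-1}}$ shows these two right adjoints agree, so the two left adjoints are naturally isomorphic, as claimed.

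There is no genuine obstacle here; the only thing demanding care is bookkeeping — keeping the direction of conjugation in $\Ad_w$ consistent with the convention ${}^wG_\lambda=wG_\lambda w^{-1}$, and verifying that each identification of underlying spaces is $G_{w\lambda}$-equivariant with the action twisted as expected. One could instead prove the $\pind$ statement directly from ${}^wP_\lambda^\mu=P_{w\lambda}^{w\mu}$ and the standard compatibility of induction and inflation with group conjugation, but the adjunction argument is cleaner and parallels the proof of Lemma \ref{lem:transitivity}.
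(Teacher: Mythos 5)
Your proof is correct and follows essentially the same route as the paper: the $\res$ case is an equality, the $\pres$ case reduces to ${}^wU_\lambda^\mu=U_{w\lambda}^{w\mu}$ from Lemma \ref{lem:P-U-properties}, and the $\pind$ case follows by uniqueness of adjoints via Lemma \ref{lem:adjoints}. You simply spell out the adjunction bookkeeping that the paper leaves implicit.
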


\begin{proof}
Once again by the uniqueness of adjoints it suffices to consider the functors $\pres$ and $\res$. The statement about $\res$ is clearly true, while the assertion about $\pres$ follows from the equality ${}^wU_\lambda^\mu=U_{w\lambda}^{w\mu}$ observed in Lemma \ref{lem:P-U-properties}.
\end{proof}

We conclude this section by establishing Mackey-type formulas for the composition of our restriction and induction functors. The formulas are instances of \cite[Theorem A3.1]{Zelevinsky}. 

\begin{proposition}\label{prop:Mackey}
For each finite set $N$ and each pair of weak partitions $\lambda,\mu\in \Part^w_N$ we have isomorphisms of functors 
\[
\begin{aligned}
 \pres^N_{\lambda}\pind_{\mu}^N & \cong \bigoplus_{S_\lambda w S_\mu \in S_\lambda \backslash S_N/S_\mu} \pind^{\lambda}_{\lambda \wedge w\mu} \Ad_w \pres^{\mu}_{w^{-1}\lambda \wedge \mu}\quad \text{and} \\
 \res^N_{\lambda}\pind_{\mu}^N & \cong \bigoplus_{S_\lambda w S_\mu \in S_\lambda \backslash S_N/S_\mu}  \pind^{\lambda}_{\lambda\wedge w\mu} \Ad_w \res^{\mu}_{w^{-1}\lambda \wedge \mu}.
 \end{aligned}
 \]
\end{proposition}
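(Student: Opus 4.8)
The plan is to deduce both formulas from the general Mackey decomposition for induction followed by restriction, which in the finite-group setting says: for subgroups $A, B \subseteq G$ and a representation of $B$, one has $\Res^G_A \Ind^G_B \cong \bigoplus_{AgB} \Ind^A_{A \cap {}^g B}\, \Ad_g\, \Res^B_{A^{g} \cap B}$, summed over double cosets. The excerpt tells us these are instances of \cite[Theorem A3.1]{Zelevinsky}, so the real content is bookkeeping: identifying the double cosets and matching up the intermediate subgroups with the groups $G_{\lambda \wedge w\mu}$, $P$'s, and $U$'s that appear in the definitions of $\pind$ and $\pres$.

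First I would set up the two functors in terms of honest induction/restriction: $\pind_\mu^N = \Ind^{G_N}_{P_\mu^N} \circ \inf$, where $\inf$ inflates a $G_\mu$-representation to $P_\mu^N = G_\mu \ltimes U_\mu^N$ by letting $U_\mu^N$ act trivially; and $\pres^N_\lambda(V) = V^{U_\lambda^N}$, which factors as restriction to $P_\lambda^N$ followed by $U_\lambda^N$-invariants. The key combinatorial input is that the double coset space $P_\lambda^N \backslash G_N / P_\mu^N$ is naturally in bijection with $S_\lambda \backslash S_N / S_\mu$: since $G_N = S_N \ltimes \Map(Y_N,H)$ and $P_\lambda^N$, $P_\mu^N$ both contain the full normal subgroup $\Map(Y_N,H)$, every double coset has a representative in $S_N$, and two elements $w, w' \in S_N$ give the same $P$-double coset iff they give the same $S$-double coset. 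This is where I would be careful — it uses that $P_\lambda^N = S_\lambda \ltimes \Map(Y_N,H)$ with the \emph{full} function group, which is exactly how $P$ was defined.

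Next, for a fixed representative $w \in S_N$, I would compute the Mackey summand. The intersection $P_\lambda^N \cap {}^w P_\mu^N$ should be identified: using ${}^w P_\mu^N = P_{w\mu}^{w N} = P_{w\mu}^N$ (Lemma \ref{lem:P-U-properties}(2), noting $wN = N$) and the decomposability results, one gets $P_\lambda^N \cap {}^w P_\mu^N = P^N_{\lambda \wedge w\mu}$, and similarly the relevant intersections with the $U$-subgroups work out by Lemma \ref{lem:P-U-properties}(3) and Lemma \ref{lem:decomposability}. Then one must check that, after taking $U_\lambda^N$-invariants on the outside and accounting for the trivial $U_\mu^N$-action on the inside, the summand $\Ind^{P_\lambda^N}_{P_\lambda^N \cap {}^w P_\mu^N} \Ad_w \Res^{P_\mu^N}_{(P_\lambda^N)^w \cap P_\mu^N}$ collapses precisely to $\pind^\lambda_{\lambda \wedge w\mu} \Ad_w \pres^\mu_{w^{-1}\lambda \wedge \mu}$ — the $\pind^\lambda$ appearing because inducing up from $P^N_{\lambda\wedge w\mu}$ and then taking $U_\lambda^N$-invariants amounts to the parabolic induction from $G_{\lambda \wedge w\mu}$ to $G_\lambda$, and the $\pres^\mu$ appearing because restricting and then recording the surviving $U$-invariants is parabolic restriction. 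Here the identities $\lambda \wedge w\mu$ versus $w^{-1}\lambda \wedge \mu$ (which differ by applying $w$) need the functoriality $wY_\nu = Y_{w\nu}$ and Lemma \ref{lem:w-i-r}.

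The $\res$ version is the same argument with $\pres$ replaced by $\res$ throughout — i.e., omitting the $U$-invariants step on the $\mu$-side — and is in fact slightly easier since one only takes $U_\lambda^N$-invariants at the very end. \textbf{The main obstacle} I anticipate is the final collapsing step: verifying cleanly that $(\text{$U_\lambda^N$-invariants}) \circ \Ind^{G_N}_{P_\lambda^N}$ restricted along the double-coset summand really does produce the parabolic induction $\pind^\lambda_{\lambda\wedge w\mu}$ rather than some larger induced module, and dually that the inflation-by-$U_\mu^N$-trivially on the inside interacts correctly with $\Res$ and $\Ad_w$ to yield exactly $\pres^\mu_{w^{-1}\lambda\wedge\mu}$ (resp.\ $\res^\mu$). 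This is essentially a matter of carefully tracking which $U$-subgroups act trivially and which invariants have already been taken, using Lemma \ref{lem:decomposability} to commute the semidirect-product decompositions past the intersections; once that is done the statement follows by assembling the summands over $S_\lambda \backslash S_N / S_\mu$.
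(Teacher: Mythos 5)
Your proposal is essentially the paper's argument carried out by hand. The paper's proof consists of invoking Zelevinsky's general Mackey theorem \cite[Theorem A3.1]{Zelevinsky} for composites of the functors $\pind$ and $\pres$, after checking its decomposability hypothesis (D) via Lemma \ref{lem:decomposability} and making exactly the identifications you list: $P_\lambda^N\backslash G_N/P_\mu^N\cong S_\lambda\backslash S_N/S_\mu$, and the intersections $G_{\lambda\wedge w\mu}$, $U^\lambda_{\lambda\wedge w\mu}$, $G_{w^{-1}\lambda\wedge\mu}$, $U^\mu_{w^{-1}\lambda\wedge\mu}$. You instead propose to rederive the needed special case from the classical Mackey decomposition, which is a perfectly viable and slightly more self-contained route; what the citation buys the paper is precisely the bookkeeping you flag as the main obstacle. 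That obstacle does resolve cleanly: since $U_\lambda^N$ is normal in $P_\lambda^N$ and contained in $B\coloneqq P_\lambda^N\cap{}^wP_\mu^N=P^N_{\lambda\wedge w\mu}$, one has $(\Ind_{B}^{P_\lambda^N}W)^{U_\lambda^N}\cong \Ind_{B/U_\lambda^N}^{P_\lambda^N/U_\lambda^N}(W^{U_\lambda^N})$ with $P_\lambda^N/U_\lambda^N\cong G_\lambda$ and $B/U_\lambda^N\cong P^\lambda_{\lambda\wedge w\mu}$; and the decomposition $Y_N\setminus Y_{w^{-1}\lambda}=(Y_\mu\setminus Y_{w^{-1}\lambda\wedge\mu})\sqcup\bigl(Y_N\setminus(Y_\mu\cup Y_{w^{-1}\lambda})\bigr)$ shows that the part of ${}^{w^{-1}}U_\lambda^N$ lying outside $U^\mu_{w^{-1}\lambda\wedge\mu}$ is contained in $U_\mu^N$, which acts trivially on the inflated module, so the invariants are exactly $\pres^\mu_{w^{-1}\lambda\wedge\mu}(V)$; the same support computation shows $U^\lambda_{\lambda\wedge w\mu}$ acts trivially on the result, so the summand is genuinely an inflation and equals $\pind^\lambda_{\lambda\wedge w\mu}\Ad_w\pres^\mu_{w^{-1}\lambda\wedge\mu}(V)$.

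One correction to your closing remark: in the second formula no $U_\lambda^N$-invariants are taken at any stage, since $\res^N_\lambda$ is plain restriction; taking them ``at the very end'' would reproduce the first formula, not the second. The clean way is to apply ordinary Mackey to the pair $(G_\lambda,P_\mu^N)$ directly --- the double cosets are still parametrised by $S_\lambda\backslash S_N/S_\mu$, because it suffices that \emph{one} of the two subgroups, here $P_\mu^N$, contains the normal subgroup $\Map(Y_N,H)$ (your justification used that both $P$'s do, which is not needed) --- and then $G_\lambda\cap{}^wP_\mu^N=P^\lambda_{\lambda\wedge w\mu}$ and the trivial action of $U_\mu^N$ give the summand $\pind^\lambda_{\lambda\wedge w\mu}\Ad_w\res^\mu_{w^{-1}\lambda\wedge\mu}$ with no invariants anywhere; this matches the paper's second application of the theorem, where it takes $\Zelev{N}=\Zelev{Q}=G_\lambda$ and $\Zelev{V}=\{1\}$.
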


\begin{proof}
We first establish the formula for $\pres^N_{\lambda}\pind_{\mu}^N$ by applying \cite[Theorem A3.I]{Zelevinsky} to the following choices of groups:
\[
\Zelev{G}=G_N,\quad  \Zelev{M}=G_{\mu},\quad  \Zelev{U}=U_{\mu}^N,\quad  \Zelev{P}=P_{\mu}^N,\quad  \Zelev{N}=G_{\lambda},\quad \Zelev{V}=U_{\lambda}^N, \quad  \Zelev{Q}=P_{\lambda}^N.
\]
($\Zelev{G}$, $\Zelev{P}$, etc., designate the objects denoted by those letters in \cite[A3]{Zelevinsky}, while $G$, $P$, etc., refer to objects defined in this paper.)

The characters ${\theta}$ and $\psi$ appearing in \cite{Zelevinsky} are here taken to be trivial. 
The double-coset space $\Zelev{Q}\backslash \Zelev{G}/\Zelev{P}$ is computed thus:
\[
P_{\lambda}^N \backslash G_N / P_{\mu}^N = (S_\lambda \ltimes \Map({Y_N},H))\backslash (S_N\ltimes \Map({Y_N},H)) / (S_{\mu}\ltimes \Map(Y_N,H)) \cong S_\lambda \backslash S_N / S_{\mu}.
\]
For each $w\in S_N$ the groups ${}^w\Zelev{P}=P_{w\mu}^N$, ${}^w\Zelev{M}=G_{w\mu}$, and ${}^w\Zelev{U}=U_{w\mu}^N$ are decomposable with respect to $(\Zelev{N}, \Zelev{V})= (G_\lambda, U_\lambda^N)$ by Lemma \ref{lem:decomposability}. Likewise ${}^w\Zelev{Q}$, ${}^w\Zelev{N}$, and ${}^w\Zelev{V}$ are decomposable with respect to $(\Zelev{M},\Zelev{U})$, so the hypothesis (D) of \cite[p. 168]{Zelevinsky} is satisfied. 

Lemmas \ref{lem:G-properties} and \ref{lem:P-U-properties} yield the following identifications of the groups $\Zelev{M}'$, $\Zelev{N}'$, etc., appearing on \cite[p.~168]{Zelevinsky}:
\[
\Zelev{M}'= G_{w^{-1}\lambda \wedge \mu},\quad \Zelev{N}' = G_{\lambda \wedge w\mu},\quad \Zelev{V}'=U_{w^{-1}\lambda\wedge\mu}^\mu,\quad \Zelev{U}'=U_{\lambda \wedge w\mu}^{\lambda}.
\]
Having made these identifications, an application of \cite[Theorem A3.1]{Zelevinsky} gives the stated formula for $\pres^N_{\lambda}\pind_{\mu}^N$.  

The proof of the formula for $\res^N_{\lambda}\pind_{\mu}^N$ is very similar: we now take 
\[
\Zelev{G}=G_N,\quad  \Zelev{M}=G_{\mu},\quad  \Zelev{U}=U_{\mu}^n,\quad  \Zelev{P}=P_{\mu}^N,\quad  \Zelev{N}=\Zelev{Q}=G_{\lambda},\quad \Zelev{V}=\{1\}.
\]
We still have $\Zelev{Q}\backslash \Zelev{G}/\Zelev{P}\cong S_\lambda \backslash S_N/ S_\mu$, and the decomposability hypothesis is again satisfied by virtue of Lemma \ref{lem:decomposability}. We now have
\[
\Zelev{M}'= G_{w^{-1}\lambda \wedge \mu},\quad \Zelev{N}' = G_{\lambda\wedge w\mu},\quad \Zelev{V}'=\{1\},\quad \Zelev{U}' =U_{\lambda \wedge w\mu}^{\lambda},
\]
and the formula from \cite[Theorem A3.1]{Zelevinsky} becomes in this case the proposed  formula for $\res^N_{\lambda}\pind_{\mu}^N$.
\end{proof}

\section{Hopf algebras associated to Young sets}\label{sec:Hopf}

\subsection{Construction of the Hopf algebras}\label{subsec:R-Hopf}

We continue to consider the family of groups $G_N=G_N(Y,H)= S_N\ltimes \Map(Y_N,H)$ associated to a Young set $Y$ and a finite group $H$. 

For each finite group $G$ we let $R(G)$ denote the Grothendieck group of $\Rep(G)$. Thus $R(G)$ is a free abelian group with basis $\Irr(G)$, the set of isomorphism classes of irreducible $\k$-linear representations of $G$. 

For all pairs $K,L$ of finite sets we write $G_{K,L}$ to mean the Young subgroup of $G_{K\sqcup L}$ associated to the weak partition with blocks $K$ and $L$. The isomorphism $G_K\times G_L\xrightarrow{\cong} G_{K,L}$ and the bijection  $\Irr(G_K)\times \Irr(G_L)\xrightarrow{(V,V')\mapsto V\otimes_{\k} V'} \Irr(G_K\times G_L)$ yield a canonical isomorphism $R(G_K)\otimes_{\Z} R(G_L) \xrightarrow{\cong} R(G_{K,L})$ which we shall frequently invoke without further comment. 

We define $\mathcal R_{Y,H}$, or $\mathcal R$ for short, to be the free abelian group 
\[
\mathcal R = \big(\bigoplus_{N\in \Set} R(G_N)\big)_{\Set^\times}
\]
where the subscript indicates that we take coinvariants for the groupoid of set bijections; that is to say, we impose the relation $\rho = \Ad_w \rho$ whenever $\rho$ and $w$ are as in \eqref{eq:Adw-R}. Thus $\mathcal R$ is a free abelian group  with   basis $\left(\bigsqcup_{N}\Irr(G_N)\right)_{\Set^\times}$. We grade $\mathcal R$ so that $R(G_N)$ sits in degree $\#N$.

We consider the following graded, $\Z$-linear maps:
\begin{enumerate}[$\square$]
\item multiplication: $m:{\mathcal R}\otimes_{\Z} {\mathcal R}\to {\mathcal R}$ defined as the direct sum of the maps 
\[
R(G_K)\otimes_{\Z} R(G_L) \xrightarrow{\cong} R(G_{K,L}) \xrightarrow{\pind_{K,L}^{K\sqcup L}} R(G_{K\sqcup L}).
\]
\item comultiplication: $\Delta:{\mathcal R}\to {\mathcal R}\otimes_{\Z} {\mathcal R}$ defined on $\rho\in R(G_N)$ by
\begin{equation}\label{eq:Delta-R-definition}
\Delta(\rho) = \sum_{S_N(K)\in \orbits{S_N}{\Power{N}}} \res^N_{K,K^c} \rho.
\end{equation}
Here the sum is over a set of representatives for the $S_N$-orbits of subsets of $N$, and $K^c=N\setminus K$. The representation $\res^N_{K,K^c}\rho$ of $G_{K,K^c}$ is regarded as an element of $\mathcal R\otimes_{\Z}\mathcal R$ via the canonical isomorphism $R(G_{K,K^c})\cong R(G_K)\otimes_{\Z} R(G_{K^c})$.
\item another comultiplication: $\delta:{\mathcal R}\to {\mathcal R}\otimes_{\Z} {\mathcal R}$ defined on $\rho\in R(G_N)$ by
\[
\delta(\rho) = \sum_{S_N(K)\in \orbits{S_N}{\Power{N}}} \pres^N_{K,K^c} \rho.
\]
This is to be understood in the same way as \eqref{eq:Delta-R-definition}.
\item unit: $e:\Z\to {\mathcal R}$ defined by setting $e(1)\coloneqq \triv_{G_\emptyset}$, the unique element of $\Irr(G_\emptyset)$.
\item counit: $\epsilon:{\mathcal R}\to \Z$ defined by setting $\epsilon(\triv_{G_\emptyset})=1$, and $\epsilon(\rho)=0$ for all other irreducible representations $\rho$.
\end{enumerate}
Note that Lemma \ref{lem:w-i-r} ensures that $m$, $\Delta$, and $\delta$ are well-defined on $\Set^\times$-coinvariants.

\begin{theorem}\label{thm:PSH}
Fix a Young set $Y$ and a finite group $H$. 
\begin{enumerate}[\rm(1)]
\item The maps $m$, $\Delta$, $e$, and $\epsilon$ make ${\mathcal R}_{Y,H}$ into a graded, connected, commutative, and cocommutative Hopf algebra over $\Z$. We denote this Hopf algebra by ${\mathcal R}^{\Delta}_{Y,H}$.
\item The  maps $m$, $\delta$, $e$, and $\epsilon$ and the basis $\big(\bigsqcup_{N} \Irr(G_N)\big)_{\Set^\times}$ make ${\mathcal R}_{Y,H}$ into a \emph{PSH algebra}: a graded, connected, positive, self-adjoint Hopf algebra over $\Z$ (cf.~\cite[1.4]{Zelevinsky}). We denote this PSH algebra by ${\mathcal R}^{\delta}_{Y,H}$.
\end{enumerate}
\end{theorem}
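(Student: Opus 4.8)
The plan is to verify the Hopf-algebra and PSH axioms directly from the properties of the induction/restriction functors established in Section~\ref{sec:Young}, following the template of Zelevinsky's treatment of the classical case. I would begin by recording that $\mathcal R$ is a free abelian group with the stated basis, graded by cardinality, and that $R(G_\emptyset)=\Z$, so connectedness is immediate. Associativity of $m$ follows from the transitivity isomorphism $\pind_\lambda^\nu\cong\pind_\mu^\nu\pind_\lambda^\mu$ of Lemma~\ref{lem:transitivity} applied to the chain of weak partitions refining $(K,L,M)$ inside $K\sqcup L\sqcup M$, together with the obvious associativity of disjoint union of sets; the unit axiom is trivial since $G_\emptyset$ is the trivial group and $\pind$ along $G_\emptyset\times G_L\into G_L$ is the identity. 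Commutativity of $m$ is where the passage to $\Set^\times$-coinvariants is used: the bijection $K\sqcup L\to L\sqcup K$ swapping the two blocks induces, via Lemma~\ref{lem:w-i-r}, an isomorphism $\Ad_w\pind_{K,L}^{K\sqcup L}\cong\pind_{L,K}^{L\sqcup K}\Ad_w$, and since we have quotiented by $\Ad_w$ the two products agree. Dually, coassociativity and counitality of $\Delta$ (resp.\ $\delta$) follow from the transitivity of $\res$ (resp.\ of $\pres$, Lemma~\ref{lem:transitivity}), using that iterating the sum over $S_N$-orbits of subsets reproduces the sum over $S_N$-orbits of ordered triple-partitions of $N$; cocommutativity again uses the coinvariants, via Lemma~\ref{lem:w-i-r} applied to the swap of the two blocks of $(K,K^c)$.

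The heart of the matter is the compatibility of multiplication with comultiplication, i.e.\ that $\Delta$ (resp.\ $\delta$) is an algebra map, equivalently that $m$ is a coalgebra map. This is exactly what the Mackey formula of Proposition~\ref{prop:Mackey} delivers. Concretely, one takes $\rho\otimes\rho'\in R(G_K)\otimes R(G_L)$, forms $m(\rho\otimes\rho')=\pind_{K,L}^{K\sqcup L}(\rho\otimes\rho')$, and applies $\Delta$; on the other side one applies $\Delta\otimes\Delta$ first and then multiplies. Both sides are naturally indexed by ways of splitting $K\sqcup L$ into an ordered pair of subsets up to the $S_{K\sqcup L}$-action, which by the double-coset description $S_{(A,A^c)}\backslash S_{K\sqcup L}/S_{(K,L)}$ corresponds to the combinatorial data of a subset of $K$ and a subset of $L$ (up to the relevant symmetric-group actions); matching the two expressions term by term is precisely Proposition~\ref{prop:Mackey} with $\lambda=(A,A^c)$ and $\mu=(K,L)$, after one checks that in this situation the subgroups $U$ all vanish for the $\res$-version (giving the statement for $\Delta$) or survive as the displayed $U^\lambda_{\lambda\wedge w\mu}$ for the $\pres$-version (giving the statement for $\delta$). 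I expect this bookkeeping — translating the functor isomorphism of Proposition~\ref{prop:Mackey} into the bialgebra identity, and keeping the grading and the coinvariants straight — to be the main obstacle, though it is conceptual rather than deep. Given the bialgebra structure, gradedness and connectedness automatically supply an antipode, completing part~(1).

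For part~(2), the PSH axioms of \cite[1.4]{Zelevinsky} are: positivity (the structure constants of $m$ and $\delta$ in the basis $\Irr(G_N)$ are non-negative) and self-adjointness ($\delta$ is adjoint to $m$ with respect to the bilinear form making the basis orthonormal). Positivity is clear because $\pind_{K,L}^{K\sqcup L}$ is an honest induction functor, hence sends actual representations to actual representations, so the product of two irreducibles is a non-negative combination of irreducibles; likewise $\pres^N_{K,K^c}=(-)^{U^N_{K,K^c}}$ takes a representation to a representation, so $\delta$ has non-negative structure constants. Self-adjointness is the statement $\langle \pind_{K,L}^{K\sqcup L}(\rho\otimes\rho'),\sigma\rangle=\langle\rho\otimes\rho',\pres^{K\sqcup L}_{K,L}\sigma\rangle$ for $\sigma\in\Irr(G_{K\sqcup L})$, which is exactly the adjunction of Lemma~\ref{lem:adjoints} ($\pind$ and $\pres$ are two-sided adjoints) combined with the definition of the form and of $\delta$ via the orbit sum; the orbit sum in $\delta$ matches the identification $R(G_{K,L})\cong R(G_K)\otimes R(G_L)$ because non-isomorphic pairs $(K,L)$ and $w\cdot(K,L)$ contribute the same term after passing to coinvariants, so each unordered splitting is counted once. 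Finally, one notes that $\pres$ rather than $\res$ is the right comultiplication for the PSH structure precisely because it is $\pres$, not $\res$, that is adjoint to $\pind$; this is also why part~(1) uses $\Delta$ (built from $\res$) for the commutative-cocommutative Hopf algebra while part~(2) uses $\delta$ (built from $\pres$) for the PSH algebra.
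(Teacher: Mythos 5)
Your proposal is correct and follows essentially the same route as the paper: connectedness and the easy axioms from the basic functor properties (Lemmas \ref{lem:transitivity} and \ref{lem:w-i-r}), the multiplication--comultiplication compatibility from Proposition \ref{prop:Mackey} with $\lambda=(A,A^c)$, $\mu=(K,L)$ (as in \cite[A3.2]{Zelevinsky}), self-adjointness of $m$ and $\delta$ from Lemma \ref{lem:adjoints}, and positivity from the fact that all structure maps come from genuine functors on representation categories. Two minor remarks, neither affecting correctness: the paper gets commutativity of $m$ for free from \cite[Proposition 1.6]{Zelevinsky} (PSH algebras are automatically commutative) instead of your direct swap argument, and your parenthetical about the groups $U$ ``vanishing'' in the $\res$-version of the Mackey formula is slightly off---both formulas in Proposition \ref{prop:Mackey} retain the functor $\pind^{\lambda}_{\lambda\wedge w\mu}$ (hence inflation along $U^{\lambda}_{\lambda\wedge w\mu}$), the two versions differing only in whether the restriction-side functor is $\res^{\mu}_{w^{-1}\lambda\wedge\mu}$ or $\pres^{\mu}_{w^{-1}\lambda\wedge\mu}$.
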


\begin{example}\label{example:non-isomorphic}
$\mathcal R^{\delta}_{\emptyset,1}$ and $\mathcal R^{\Delta}_{\emptyset,1}$ are both the Hopf algebra of representations of the symmetric groups, which is isomorphic to the Hopf algebra $\Sym_{\Z}$ of symmetric functions with $\Z$ coefficients; see \cite[\S5, \S6]{Zelevinsky}. Both $\mathcal R^{\delta}_{\id,H}$ and $\mathcal R^{\Delta}_{\id,H}$ are the Hopf algebras constructed in \cite[\S7]{Zelevinsky}. Unlike in these examples, the Hopf algebras $\mathcal R^\delta_{Y,H}$ and $\mathcal R^\Delta_{Y,H}$ are generally distinct (as  Hopf algebras with distinguished bases): see Remark \ref{rem:graph-PSH} for an example.
\end{example}

\begin{remark}
Taking the dual of the Hopf algebra ${\mathcal R}^{\Delta}_{Y,H}$ gives a third  Hopf-algebra structure on ${\mathcal R}_{Y,H}$, in which the multiplication is given by the usual induction functors $\ind_{G_{K,L}}^{G_{K\sqcup L}}$, while the comultiplication is given by the functors $\pres^{K\sqcup L}_{K,L}$. Of course, the PSH algebra ${\mathcal R}^{\delta}_{Y,H}$ is its own dual.
\end{remark}

\begin{proof}[Proof of Theorem \ref{thm:PSH}]
The proof that ${\mathcal R}^\Delta$ and ${\mathcal R}^\delta$ satisfy the Hopf axioms---that is, the axioms listed in \cite[1.3]{Zelevinsky}---is similar for the two cases, and both are similar to the case of $Y =\emptyset$ and $H=\{1\}$ established in \cite[6.2]{Zelevinsky}. 
Most of the axioms follow in a very straightforward way from the basic properties of the functors $\pind$, $\pres$, and $\res$ observed in the previous section; for example, the associativity of multiplication follows from Lemma \ref{lem:transitivity}. The compatibility between multiplication and comultiplication---that is, the commutativity of the diagram
\[
\xymatrix@C=60pt{
{\mathcal R}\otimes_{\Z}  {\mathcal R} \ar[r]^-{m} \ar[d]_-{\Delta\otimes \Delta} & {\mathcal R} \ar[r]^-{\Delta} & {\mathcal R}\otimes_{\Z} {\mathcal R} \\
{\mathcal R}\otimes_{\Z} {\mathcal R}\otimes_{\Z} {\mathcal R} \otimes_{\Z} {\mathcal R} \ar[rr]^-{w\otimes x\otimes y\otimes z\longmapsto w\otimes y\otimes x\otimes z} & & {\mathcal R}\otimes_{\Z} {\mathcal R} \otimes_{\Z} {\mathcal R}\otimes_{\Z} {\mathcal R} \ar[u]_-{m\otimes m}
}
\]
and of the corresponding diagram for $\delta$---follows from Proposition \ref{prop:Mackey} just as in \cite[A3.2]{Zelevinsky}. 

Thus ${\mathcal R}^\Delta$ and ${\mathcal R}^\delta$ are connected, graded Hopf algebras. The proofs of parts (1) and (2) diverge at this point; let us handle (2) first.

We must verify that ${\mathcal R}^\delta$ satisfies the additional axioms from \cite[1.4]{Zelevinsky}; again, the argument closely follows that of \cite[6.2]{Zelevinsky}. The fact that  $m$ and $\delta$ are adjoints to one another with respect to the inner products induced by our choice of basis follows from  the fact that $\pind_{K,L}^{K\sqcup L}$ and $\pres^{K\sqcup L}_{K,L}$ are adjoint functors  (Lemma \ref{lem:adjoints}). The positivity of all of the structure maps relative to our chosen basis follows immediately from the fact that all of these maps are defined via functors between representation categories. This completes our proof of part (2).

To complete the proof of part (1) we must show that the Hopf algebra ${\mathcal R}^\Delta$ is commutative and cocommutative. We have ${\mathcal R}^\Delta={\mathcal R}^\delta$ as algebras, and the PSH algebra ${\mathcal R}^\delta$ is automatically commutative (see \cite[Proposition 1.6]{Zelevinsky}). So we are left to prove that ${\mathcal R}^\Delta$ is cocommutative, which amounts to the assertion that for all subsets $K\subseteq N$ the diagram
\begin{equation}\label{eq:cocommutative-proof-1}
\xymatrix@R=5pt{
& R(G_{K,K^c})\ar[r]^-{\cong} & R(G_K)\otimes_{\Z} R(G_{K^c}) \ar[dd]^-{\text{flip}}\\
R(G_{N})\ar[ur]^-{\res^{N}_{K,K^c}} \ar[dr]_-{\res^{N}_{K^c,K}} & & \\
& R(G_{K^c,K}) \ar[r]^-{\cong} & R(G_{K^c})\otimes_{\Z} R(G_K)
}
\end{equation}
commutes. But this is obvious because $G_{K,K^c}$ and $G_{K^c,K}$ are the same subgroup of  $G_N$.
\end{proof}

\subsection{Application of Clifford theory}\label{subsec:Clifford}

Let $Y$ be a Young set, let $H$ be a finite group, and let $G_N=G_N(Y,H)=S_N\ltimes \Map(Y_N,H)$ as before. An application of Clifford theory  gives a description of the set $\Irr(G_N)$  in terms of orbits and isotropy groups for the action of $S_N$ on the set $\Irr(\Map(Y_N,H))$. In this section we shall briefly recall how this correspondence works (referring to \cite[Section 4.3]{James-Kerber}, for instance, for the details); and we then use this correspondence to give another description of the Hopf algebras $\mathcal R^{\Delta}_{Y,H}$ and $\mathcal R^{\delta}_{Y,H}$.

Fix a set $\widehat{H}$ of representatives for the isomorphism classes of irreducible representations of $H$. For each finite set $N$ and each function $F\in \Map(Y_N,\widehat{H})$ we let $\pi_F\in \Irr(\Map(Y_N,H))$ be the representation defined by 
\[
\pi_F(f)\coloneqq \bigotimes_{y\in Y_N} F(y)\left(f(y)\right) \in \GL\left(\bigotimes_{y\in Y_N} V_{F(y)}\right) \quad (f\in \Map(Y_N,H)).
\]
Here $V_{F(y)}$ is the vector space underlying the representation $F(y)\in \widehat{H}$, and $F(y)\left(f(y)\right)$ is the linear map $V_{F(y)}\to V_{F(y)}$ by which  $f(y)\in H$ acts under the representation $F(y)$.
The map $F\mapsto \pi_F$ is a bijection $\Map(Y_N,\widehat{H}) \to \Irr(\Map(Y_N,H))$.

For each bijection of sets $w:N\to M$ and each $F\in \Map(Y_N,\widehat{H})$ we define $wF\in \Map(Y_M,\widehat{H})$ by $wF(y)\coloneqq F(w^{-1}y)$. Setting $M=N$ gives an action of the group $S_N$ on the function space $\Map(Y_N,\widehat{H})$. For each $F\in \Map(Y_N,\widehat{H})$ we define
\[
\Aut F \coloneqq \{w\in S_N\ |\ wF=F\} \quad \text{and}\quad G_F\coloneqq (\Aut F)\ltimes \Map(Y_N,H)\subseteq G_N.
\]
For each representation $\gamma$ of $\Aut F$ we let $\gamma\ltimes \pi_F$ be the representation of $G_F=\Aut F\ltimes \Map(Y_N,H)$ on the tensor product vector space $V_\gamma\otimes_{\k} \bigotimes_{y\in Y_N} V_{F(y)}$, where $\Map(Y_N,H)$ acts trivially on $V_\gamma$ and by $\pi_F$ on $\bigotimes_y V_{F(y)}$, and where $\Aut F$ acts on $V_\gamma$ by $\gamma$ and on $\bigotimes_y F_y$ by permuting the factors: 
\[
w \bigotimes_{y\in Y_N} v_y \coloneqq \bigotimes_{y\in Y_N} v_{w^{-1}y} \qquad (v_y\in V_{F(y)}).
\]
This is well defined because $V_{F(y)}$ and $V_{F(w^{-1}y)}$ are the same vector space. 

For each bijection of sets $w:N\to M$ and each $F\in \Map(Y_N,\widehat{H})$ we have ${}^w\Aut F=\Aut wF$, giving an equivalence
\begin{equation}\label{eq:Adw-M}
\Ad_w: \Rep(G_N)\xrightarrow{\gamma\mapsto \gamma(w^{-1}\cdot w)} \Rep(G_M).
\end{equation}

Clifford theory, in this case, says that the maps
\[
\Phi_F: \Irr ( \Aut F ) \to \Irr(G_N),\qquad \Phi_F(\gamma)\coloneqq \ind_{G_F}^{G_N} (\gamma\ltimes \pi_F ),
\]
defined for each $F\in \Map(Y_N,\widehat{H})$, assemble into a bijective map
\begin{equation}\label{eq:Clifford-Mackey-bijection}
 \Bigg( \bigsqcup_{\substack{N\in \Set, \\ F\in \Map(Y_N,\widehat{H})}} \Irr(\Aut F) \Bigg)_{\Set^\times} \xrightarrow{\quad \bigsqcup \Phi_F\quad}  \Big( \bigsqcup_{N\in \Set} \Irr(G_N)\Big)_{\Set^\times}.
\end{equation}
As before, the subscript $\Set^\times$ indicates the quotient space for the actions \eqref{eq:Adw-R} and \eqref{eq:Adw-M} of the groupoid $\Set^\times$.

Now consider 
\begin{equation}\label{eq:M-definition}
\mathcal M_{Y,H} \coloneqq \Big( \bigoplus_{\substack{N\in \Set \\ F\in \Map(Y_N,\widehat{H})}}  R( \Aut F)\Big)_{\Set^\times}
\end{equation}
which  is a free abelian group with graded basis
\begin{equation}\label{eq:M-basis}
 \Big( \bigsqcup_{\substack{N\in \Set, \\ F\in \Map(Y_N,\widehat{H})}} \Irr(\Aut F) \Big)_{\Set^\times} .
 \end{equation}

The bijection of bases from  \eqref{eq:Clifford-Mackey-bijection} gives an isomorphism of  groups  $\Phi:\mathcal M_{Y,H} \xrightarrow{\cong} \mathcal R_{Y,H}$, and hence Theorem \ref{thm:PSH} furnishes $\mathcal M_{Y,H}$ with two Hopf-algebra structures. Our purpose in this section is to describe these structures explicitly. Since $Y$ and $H$ will be fixed we shall henceforth drop them from the notation when convenient, writing $\mathcal M$ for $\mathcal M_{Y,H}$ and $\mathcal R$ for $\mathcal R_{Y,H}$.

For each pair of finite sets $K,L$ we have an $S_{K,L}$-equivariant embedding
\begin{equation}\label{eq:F-sqcup}
\Map(Y_K,\widehat{H}) \times \Map(Y_L,\widehat{H}) \xrightarrow{\cong} \Map({Y_K\sqcup Y_L},\widehat{H}) = \Map(Y_{K,L},\widehat{H}) \into \Map({Y_{K\sqcup L}},\widehat{H})
\end{equation}
where the last arrow is defined by extending each function $F:Y_{K,L}\to \widehat{H}$ to a function $Y_{K\sqcup L}\to \widehat{H}$ by defining $F(y)=\triv_H$ for all $y\in Y_{K\sqcup L}\setminus (Y_K\sqcup Y_L)$. Here $\triv_H$ denotes the one-dimensional trivial representation of $H$. We shall denote the embedding \eqref{eq:F-sqcup} by $(F_K,F_L)\mapsto F_K\sqcup F_L$.

The standard embedding $S_K\times S_L \into S_{K\sqcup L}$ restricts to an embedding $\Aut F_K \times \Aut {F_L}\into \Aut (F_K\sqcup F_L)$, and so we have an induction functor
\[
\ind_{\Aut F_K \times \Aut F_L}^{\Aut (F_K\sqcup F_L)} : \Rep(\Aut F_K\times \Aut F_L) \to \Rep(\Aut(F_K\sqcup F_L)).
\]
On the free abelian group $\mathcal M$ we define a graded multiplication $\mathcal M\otimes_{\Z} \mathcal M\to \mathcal M$ as the direct sum of the maps
\begin{equation}\label{eq:M-mult-definition}
R(\Aut F_K)\otimes_{\Z} R(\Aut F_L) \xrightarrow{\cong} R(\Aut F_K \times \Aut F_L) \xrightarrow{\ind} R(\Aut(F_K\sqcup F_L)).
\end{equation}
The transitivity of induction ensures that $\mathcal M$ becomes an associative graded algebra with this multiplication; the unit is the trivial representation of the trivial automorphism group $\Aut F_\emptyset$, where $F_\emptyset\in \Map({Y_\emptyset},\widehat{H})$ is the empty function.

\begin{proposition}\label{prop:Clifford-Mackey-multiplication}
The map $\Phi : \mathcal M\to \mathcal R$ is an isomorphism of graded algebras.
\end{proposition}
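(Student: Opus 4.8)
The plan is as follows. Since $\Phi$ is already an isomorphism of graded abelian groups, via the bijection of bases \eqref{eq:Clifford-Mackey-bijection}, it remains only to check that $\Phi$ respects units and multiplications. Compatibility with units is immediate: the unit of $\mathcal M$ is the trivial representation of the trivial group $\Aut F_\emptyset$, and $\Phi_{F_\emptyset}$ carries it to $\ind_{G_\emptyset}^{G_\emptyset}$ of the trivial representation, i.e.\ to $\triv_{G_\emptyset}$, the unit of $\mathcal R$. Since both products are $\Z$-bilinear and determined by their values on the distinguished bases, multiplicativity reduces to fixing finite sets $K,L$, functions $F_K\in\Map(Y_K,\widehat H)$, $F_L\in\Map(Y_L,\widehat H)$, irreducible representations $\gamma_K\in\Irr(\Aut F_K)$, $\gamma_L\in\Irr(\Aut F_L)$, and proving the single isomorphism
\[
\pind_{K,L}^{K\sqcup L}\!\left(\Phi_{F_K}(\gamma_K)\otimes_{\k}\Phi_{F_L}(\gamma_L)\right)\ \cong\ \Phi_{F_K\sqcup F_L}\!\left(\ind_{\Aut F_K\times\Aut F_L}^{\Aut(F_K\sqcup F_L)}\!\left(\gamma_K\otimes_{\k}\gamma_L\right)\right)
\]
in $R(G_{K\sqcup L})$; the left side is $\Phi(\gamma_K)\cdot\Phi(\gamma_L)$ as computed in $\mathcal R$ (by the definition of $m$), and the right side is $\Phi(\gamma_K\cdot\gamma_L)$ (by \eqref{eq:M-mult-definition} and the definition of $\Phi$).

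To organise the computation I would set up the following groups and record two elementary observations. Put $A\coloneqq\Aut F_K\times\Aut F_L$ and $\widetilde A\coloneqq\Aut(F_K\sqcup F_L)$; the embedding $S_K\times S_L\into S_{K\sqcup L}$ restricts to $A\into\widetilde A$, and in fact $A=\widetilde A\cap S_{K,L}$, since $Y_K$, $Y_L$, and $Y_{K\sqcup L}\setminus Y_{K,L}$ are each $S_{K,L}$-invariant and $F_K\sqcup F_L$ is constantly $\triv_H$ on that complement. Put $B\coloneqq\Map(Y_{K\sqcup L},H)$, $B_0\coloneqq\Map(Y_{K,L},H)$, and $U\coloneqq U_{K,L}^{K\sqcup L}=\Map(Y_{K\sqcup L}\setminus Y_{K,L},H)$, so that $B=B_0\times U$. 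One then has
\[
A\ltimes B_0=G_{F_K}\times G_{F_L}\le G_{K,L}=S_{K,L}\ltimes B_0,\qquad A\ltimes B\le G_{F_K\sqcup F_L}=\widetilde A\ltimes B\le G_{K\sqcup L}=S_{K\sqcup L}\ltimes B,
\]
where $U$ is a normal subgroup of $P_{K,L}^{K\sqcup L}=S_{K,L}\ltimes B$ and of $A\ltimes B$, and the quotient-by-$U$ maps identify $A\ltimes B$ and $P_{K,L}^{K\sqcup L}$ with the preimages of $A\ltimes B_0$ and of $G_{K,L}$. Observation (a): under the canonical identification of $\bigotimes_{y\in Y_{K\sqcup L}}V_{(F_K\sqcup F_L)(y)}$ with $\big(\bigotimes_{y\in Y_K}V_{F_K(y)}\big)\otimes_{\k}\big(\bigotimes_{y\in Y_L}V_{F_L(y)}\big)$ obtained by deleting the one-dimensional factors for $y\notin Y_{K,L}$, the representation $\pi_{F_K\sqcup F_L}$ of $B$ restricts to $\pi_{F_K}\otimes_{\k}\pi_{F_L}$ on $B_0$ and to the trivial representation on $U$. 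Observation (b): for any $F\in\Map(Y_N,\widehat H)$ and representation $\gamma$ of $\Aut F$ one has $\gamma\ltimes\pi_F\cong\operatorname{infl}_{\Aut F}^{G_F}(\gamma)\otimes_{\k}\widehat{\pi_F}$, where $\widehat{\pi_F}$ is the representation of $G_F$ on the space of $\pi_F$ with $\Map(Y_N,H)$ acting by $\pi_F$ and $\Aut F$ permuting tensor factors; this holds because a representation of $G_F=\Aut F\ltimes\Map(Y_N,H)$ is determined by its restrictions to the two generating subgroups.

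The computation then runs in four steps, each using only standard facts about representations of finite groups over $\k$: transitivity of induction; the interchange $\ind_{H_1\times H_2}^{G_1\times G_2}(V_1\otimes_{\k}V_2)\cong\ind_{H_1}^{G_1}V_1\otimes_{\k}\ind_{H_2}^{G_2}V_2$; the interchange of inflation with induction (for $J$ normal in $G$, $J\le H\le G$, $\overline G=G/J$, $\overline H=H/J$, one has $\operatorname{infl}_{\overline G}^{G}\circ\ind_{\overline H}^{\overline G}\cong\ind_{H}^{G}\circ\operatorname{infl}_{\overline H}^{H}$); and the projection formula $\ind_{H}^{G}(W\otimes_{\k}\res^{G}_{H}V)\cong(\ind_{H}^{G}W)\otimes_{\k}V$. (i)~By the second fact and observation (a), $\Phi_{F_K}(\gamma_K)\otimes_{\k}\Phi_{F_L}(\gamma_L)\cong\ind_{A\ltimes B_0}^{G_{K,L}}\big((\gamma_K\otimes_{\k}\gamma_L)\ltimes(\pi_{F_K\sqcup F_L}|_{B_0})\big)$. (ii)~Applying $\pind_{K,L}^{K\sqcup L}=\ind_{P_{K,L}^{K\sqcup L}}^{G_{K\sqcup L}}\circ\operatorname{infl}_{G_{K,L}}^{P_{K,L}^{K\sqcup L}}$: the inflation–induction interchange with $J=U$ moves the inflation inside, inflating $(\gamma_K\otimes_{\k}\gamma_L)\ltimes(\pi_{F_K\sqcup F_L}|_{B_0})$ gives $(\gamma_K\otimes_{\k}\gamma_L)\ltimes\pi_{F_K\sqcup F_L}$ since $U$ already acts trivially there (observation (a)), and then transitivity collapses $P_{K,L}^{K\sqcup L}$; thus $\pind_{K,L}^{K\sqcup L}\big(\Phi_{F_K}(\gamma_K)\otimes_{\k}\Phi_{F_L}(\gamma_L)\big)\cong\ind_{A\ltimes B}^{G_{K\sqcup L}}\big((\gamma_K\otimes_{\k}\gamma_L)\ltimes\pi_{F_K\sqcup F_L}\big)$. (iii)~Factor this induction through $G_{F_K\sqcup F_L}=\widetilde A\ltimes B$ by transitivity, reducing to $\ind_{A\ltimes B}^{\widetilde A\ltimes B}\big((\gamma_K\otimes_{\k}\gamma_L)\ltimes\pi_{F_K\sqcup F_L}\big)$. (iv)~Using observation (b) rewrite the representation being induced as $\operatorname{infl}_{A}^{A\ltimes B}(\gamma_K\otimes_{\k}\gamma_L)\otimes_{\k}\res^{\widetilde A\ltimes B}_{A\ltimes B}\widehat{\pi_{F_K\sqcup F_L}}$; the projection formula pulls out $\widehat{\pi_{F_K\sqcup F_L}}$; the inflation–induction interchange with $J=B$ turns $\ind_{A\ltimes B}^{\widetilde A\ltimes B}\operatorname{infl}_{A}^{A\ltimes B}(\gamma_K\otimes_{\k}\gamma_L)$ into $\operatorname{infl}_{\widetilde A}^{\widetilde A\ltimes B}\ind_{A}^{\widetilde A}(\gamma_K\otimes_{\k}\gamma_L)$; and observation (b), read backwards for $F_K\sqcup F_L$, reassembles the answer as $\big(\ind_{A}^{\widetilde A}(\gamma_K\otimes_{\k}\gamma_L)\big)\ltimes\pi_{F_K\sqcup F_L}$. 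Feeding this into (iii) yields $\ind_{G_{F_K\sqcup F_L}}^{G_{K\sqcup L}}\big((\ind_{A}^{\widetilde A}(\gamma_K\otimes_{\k}\gamma_L))\ltimes\pi_{F_K\sqcup F_L}\big)=\Phi_{F_K\sqcup F_L}(\gamma_K\cdot\gamma_L)$, which is exactly the displayed identity. Finally, since every isomorphism used is natural in the data, all of this is compatible with the equivalences \eqref{eq:Adw-R} and \eqref{eq:Adw-M} and descends to the $\Set^\times$-coinvariants, so $\Phi$ is an algebra homomorphism, hence—being a bijective map of graded groups—a graded algebra isomorphism.

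I expect the main obstacle to be organisational rather than conceptual: there are five or six nested groups in play, and the argument turns on getting the interplay of inflation, restriction, and induction exactly right, in particular verifying that observation (b) and both inflation–induction interchanges remain valid after restriction to $A\ltimes B$, which is not normal in $\widetilde A\ltimes B$. No ingredient goes beyond elementary characteristic-zero representation theory of finite groups; the substance is assembling them without index errors.
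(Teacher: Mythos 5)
Your proof is correct, but it takes a genuinely different route from the paper's. The paper disposes of the key identity --- the commutativity of diagram \eqref{eq:Clifford-Mackey-multiplication-diag} --- by invoking the general Clifford-theoretic machinery of \cite[Theorems 3.6 \& 3.14]{CMO1}, which amounts to verifying a dictionary of hypotheses (decomposability, triviality of the relevant cocycles) and pasting two commuting diagrams from that reference. You instead prove the same identity by hand, using only transitivity of induction, compatibility of induction with external tensor products, the inflation--induction interchange over a normal subgroup, and the projection formula; your observation (b), that $\gamma\ltimes\pi_F\cong\operatorname{infl}_{\Aut F}^{G_F}(\gamma)\otimes_{\k}\widehat{\pi_{F}}$ with $\widehat{\pi_F}$ the canonical extension by permutation of tensor factors, is exactly the reason the cocycles in \cite{CMO1} are trivial here, so your argument makes explicit what the paper's citation hides. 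The chain (i)--(iv) is sound: the decomposition $B=B_0\times U$ with $\pi_{F_K\sqcup F_L}$ trivial on $U$ justifies step (ii), $B$ is normal in $\widetilde A\ltimes B$ so both interchanges apply (normality of $A\ltimes B$ is, as you note, never needed), and the projection formula is legitimate since $\widehat{\pi_{F_K\sqcup F_L}}$ is a genuine representation of $\widetilde A\ltimes B$. The only point you pass over quickly is that the canonical identification in observation (a) (deleting the one-dimensional factors indexed by $Y_{K\sqcup L}\setminus Y_{K,L}$) is equivariant for the permutation action of $A$; this holds because permuting trivial one-dimensional tensor factors is the identity under that identification, so nothing is lost. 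What your approach buys is a self-contained, elementary verification; what the paper's buys is brevity and placement in a framework that also covers the case of nontrivial cocycles.
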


\begin{proof}
We already know that $\Phi$ is a graded isomorphism of abelian groups. The map $\Phi_\emptyset: R(\Aut F_\emptyset) \to R(G_\emptyset)$ sends the trivial representation to the trivial representation, which is to say, $\Phi$ sends the unit of $\mathcal M$ to the unit of $\mathcal R$.

 The proposition thus amounts to the assertion that for all finite sets $K,L$, and all $F_K\in  \Map(Y_K,\widehat{H})$ and $F_L\in   \Map(Y_L,\widehat{H})$, the diagram
\begin{equation}\label{eq:Clifford-Mackey-multiplication-diag}
\xymatrix@C=40pt{
\Rep(G_{K,L}) \ar[r]^-{\pind_{K,L}^{K\sqcup L}} & \Rep(G_{K\sqcup L}) \\
\Rep(\Aut F_K \times \Aut F_L) \ar[u]^-{\Phi \otimes \Phi } \ar[r]^-{\ind} & \Rep(\Aut( F_K \sqcup F_L)) \ar[u]_-{\Phi}
}
\end{equation}
commutes. This commutativity is a special case of  \cite[Theorems 3.6 \& 3.14]{CMO1}. To be specific, set $\Zelev{G}=G_{K\sqcup L}$, $\Zelev{L}=G_{K,L}$, $\Zelev{U}=\Zelev{U_0}=U_{K,L}^{K\sqcup L}$, $\Zelev{V}=\Zelev{V_0}=\{1\}$, $\Zelev{G_0}=\Map(Y_{K\sqcup L},H)$, $\Zelev{L_0}=\Map(Y_{K,L},H)$, and $\Zelev{\psi} = \pi_{F_K}\otimes_{\k}\pi_{F_L} \in \Irr(\Map(Y_{K,L},H))$. (We are writing $\Zelev{G}$ (etc.) to designate the object called $G$ (etc.) in \cite[Section 3]{CMO1}.) We then have, still in the notation of \cite{CMO1}, $\Zelev{\phi}=\pi_{F_K\sqcup F_L}$, $\Zelev{L(\psi)}=(\Aut F_K\times \Aut F_L)\ltimes \Map(Y_{K,L},H)$, $\Zelev{G(\phi)}=\Aut(F_K \sqcup F_L)\ltimes \Map(Y_{K\sqcup L},H)$, $\Zelev{U(\phi)}=\Zelev{U}$, $\Zelev{V(\phi)}=\Zelev{V}$, $\Zelev{\overline{L}(\psi)}=\Aut F_K \times \Aut F_L$, $\Zelev{\overline{G}(\phi)}=\Aut(F_K \sqcup F_L)$, and $\Zelev{\overline{U}(\phi)}=\Zelev{\overline{V}(\phi)}=\{1\}$. The cocycles appearing in \cite[Theorem 3.14]{CMO1} are trivial in this instance. All of these identifications being made, the functor $\pind$ appearing in \cite[Theorem 3.6]{CMO1} is the functor $\pind_{K,L}^{K\sqcup L}$, while the functor $\Zelev{\pind_{\overline{U}(\phi),\overline{V}(\phi)}^{\phi'}}$ appearing in \cite[Theorem 3.14]{CMO1} is  the usual induction functor $\Rep(\Aut F_K \times \Aut F_L)\to \Rep(\Aut(F_K\sqcup F_L))$. Pasting together the two commuting diagrams from \cite[Theorems 3.6 \& 3.14]{CMO1} then yields the commuting diagram \eqref{eq:Clifford-Mackey-multiplication-diag}.
\end{proof}

To describe the comultiplication maps on $\mathcal M$ we need some more notation. 
For each $F\in \Map(Y_N,\widehat{H})$ and each subset $K\subseteq N$ we let  $F\restrict_{Y_K}$ be the restriction of the function $F$ to the subset $Y_K\subseteq Y_N$, and we let 
\[
(\Aut F)_K=\Aut_F\cap S_{K,K^c} = \{w\in \Aut F\ |\ wK=K\}
\] 
be the stabiliser of $K$ for the action of $\Aut F\subseteq S_N$ on the power set $\Power{N}$.  The group $(\Aut F)_K$ leaves the subsets $Y_K,Y_{K^c}\subseteq Y_N$ invariant, and we obtain an embedding of groups 
\[
(\Aut F)_{K} \xrightarrow{w\mapsto (w\restrict_{Y_K}, w\restrict_{Y_{K^c}})} \Aut(F\restrict_{Y_K})\times \Aut(F\restrict_{Y_{K^c}}) \subseteq S_K\times S_L.
\]

Now given $F\in \Map(Y_N,\widehat{H})$  and a representation $\gamma$ of $\Aut F$ we define
\begin{equation}\label{eq:Delta-M-definition}
\Delta_{\mathcal M}\gamma \coloneqq \sum_{\Aut F(K)\in \orbits{\Aut F}{\Power{N}}} \ind_{(\Aut F)_K}^{\Aut(F\restrict_{Y_K})\times \Aut(F\restrict_{Y_{K^c}})}\big( ( \res^{\Aut F}_{(\Aut F)_K} \gamma)\otimes_{\k}\pi_{F\restrict_{Y_N\setminus Y_{K,K^c}}}\big).
\end{equation}
We are summing over a set of representatives for the set $\orbits{\Aut F}{\Power{N}}$ of $\Aut F$-orbits in $\Power{N}$; the group $(\Aut F)_K $ acts on the representation $\pi_{F\restrict_{Y_N\setminus Y_{K,K^c}}}=\bigotimes_{y\in Y_N\setminus Y_{K,K^c}} F(y)$ by permuting the tensor factors; and each summand on the right-hand side of the formula is regarded as an element of $\mathcal M\otimes_{\Z}\mathcal M$ via the canonical isomorphisms $R(G\times G')\cong R(G)\otimes_{\Z} R(G')$. 
Note that when $H$ is abelian, so that each of its irreducible representations is one-dimensional, the representation $\pi_{F\restrict_{Y_N\setminus Y_{K,K^c}}}$ is the trivial one-dimensional representation of $(\Aut F)_K$, so the above formula simplifies to 
\begin{equation}\label{eq:Delta-M-abelian}
\Delta_{\mathcal M}\gamma \coloneqq \sum_{\Aut F(K)\in \orbits{\Aut F}{\Power{N}}} \ind_{(\Aut F)_K}^{\Aut(F\restrict_{Y_K})\times \Aut(F\restrict_{Y_{K^c}})} \res^{\Aut F}_{(\Aut F)_K} \gamma \qquad (H\text{ abelian}).
\end{equation}

To define the second comultiplication $\delta_{\mathcal M}$ we need one more piece of terminology: the \emph{support} of a function $F\in \Map(Y_N,\widehat{H})$ is defined by $\supp F = \{y\in Y_N\ |\ F(y)\neq \triv_H\}$. This is an $\Aut F$-invariant subset of $Y_N$. 

Now for $F\in \Map(Y_N,\widehat{H})$ and $\gamma\in \Rep(\Aut F)$ we define
\begin{equation}\label{eq:delta-M-definition}
\delta_{\mathcal M}\gamma \coloneqq \sum_{\substack{\Aut F(K)\in \orbits{\Aut F}{\Power{N}}\\ \supp F \subseteq Y_{K,K^c}}} \ind_{(\Aut F)_K}^{\Aut(F\restrict_{Y_K})\times \Aut(F\restrict_{Y_{K^c}})} \res^{\Aut F}_{(\Aut F)_K} \gamma.
\end{equation}

Finally, we define $\epsilon_{\mathcal M}:\mathcal M\to \Z$ by declaring that for the empty function $F_\emptyset\in \Map(Y_{\emptyset},\widehat{H})$ the map $\epsilon_{\mathcal M}:R(\Aut F_\emptyset)\to \Z$ sends the trivial representation to $1$, while for all $N\neq\emptyset$ and all $F\in \Map(Y_N,\widehat{H})$ the map $\epsilon_{\mathcal M}:R(\Aut F)\to \Z$ is identically zero.

\begin{corollary}\label{cor:M-Hopf}
The graded algebra isomorphism $\Phi:\mathcal M\to \mathcal R$ of Proposition \ref{prop:Clifford-Mackey-multiplication} relates the maps $\Delta_{\mathcal M}$, $\delta_{\mathcal M}$, and $\epsilon_{\mathcal M}$ defined above to the structure maps $\Delta$, $\delta$, and $\epsilon$ on $\mathcal R$ as follows:
\[
\Delta\Phi = (\Phi\otimes \Phi)\Delta_{\mathcal M}, \qquad \delta\Phi=(\Phi\otimes\Phi)\delta_{\mathcal M},\qquad \text{and}\qquad  \epsilon\Phi=\epsilon_{\mathcal M}.
\]
Consequently the graded algebra $\mathcal M$ equipped with the comultiplication $\Delta_{\mathcal M}$ and counit $\epsilon_{\mathcal M}$ becomes a connected, graded, commutative, and cocommutative Hopf algebra; while the graded algebra $\mathcal M$ equipped with the comultiplication $\delta_{\mathcal M}$, the counit $\epsilon_{\mathcal M}$, and the basis \eqref{eq:M-basis} becomes a PSH algebra.
\end{corollary}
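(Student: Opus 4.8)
The identity $\epsilon\Phi=\epsilon_{\mathcal M}$ is immediate: $\Phi$ is by construction the grading-preserving bijection \eqref{eq:Clifford-Mackey-bijection} between the distinguished bases, so it carries the unique degree-zero basis vector of $\mathcal M$ (the trivial representation of $\Aut F_\emptyset$) to the unique degree-zero basis vector of $\mathcal R$ (the trivial representation of $G_\emptyset$) and all other basis vectors to basis vectors of positive degree; since $\epsilon$ and $\epsilon_{\mathcal M}$ are the respective degree-zero projections, they agree after applying $\Phi$. Granting the two comultiplication identities, the rest is transport of structure. By Proposition \ref{prop:Clifford-Mackey-multiplication} the map $\Phi$ is a unital isomorphism of graded algebras, so $\Delta\Phi=(\Phi\otimes\Phi)\Delta_{\mathcal M}$ and $\epsilon\Phi=\epsilon_{\mathcal M}$ make $\Phi$ an isomorphism of bialgebras from $\mathcal M$ (with comultiplication $\Delta_{\mathcal M}$ and counit $\epsilon_{\mathcal M}$) onto $\mathcal R^{\Delta}$, whence $\mathcal M$ becomes, like $\mathcal R^{\Delta}$, a connected graded commutative cocommutative Hopf algebra by Theorem~\ref{thm:PSH}(1); and since $\Phi$ also matches the basis \eqref{eq:M-basis} with the distinguished basis of $\mathcal R^{\delta}$ it is an isometry for the induced inner products, so $\delta\Phi=(\Phi\otimes\Phi)\delta_{\mathcal M}$, together with the positivity of $m_{\mathcal M}$ and $\delta_{\mathcal M}$ (transported from that of $m$ and $\delta$), makes $\mathcal M$ with $\delta_{\mathcal M}$ a PSH algebra isomorphic to $\mathcal R^{\delta}$.

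I would prove $\delta\Phi=(\Phi\otimes\Phi)\delta_{\mathcal M}$ using self-duality rather than a direct computation. Since $\delta$ is adjoint to $m$ in the PSH algebra $\mathcal R^{\delta}$ (Lemma \ref{lem:adjoints} and the proof of Theorem~\ref{thm:PSH}(2)) and $\Phi$ is an isometric algebra isomorphism, the identity $\delta\Phi=(\Phi\otimes\Phi)\delta_{\mathcal M}$ is equivalent to $\delta_{\mathcal M}$ being adjoint, relative to the orthonormal basis \eqref{eq:M-basis}, to the multiplication \eqref{eq:M-mult-definition} of $\mathcal M$. This I would check by Frobenius reciprocity on basis elements $\gamma\in\Irr(\Aut F)$, $\alpha\in\Irr(\Aut F_1)$, $\alpha'\in\Irr(\Aut F_2)$: both $\langle\delta_{\mathcal M}\gamma,\alpha\otimes\alpha'\rangle$ and $\langle\gamma,\ind_{\Aut F_1\times\Aut F_2}^{\Aut(F_1\sqcup F_2)}(\alpha\otimes\alpha')\rangle$ vanish unless $(N,F)$ is $\Set^\times$-equivalent to $(N_1\sqcup N_2,F_1\sqcup F_2)$ -- this is precisely the role of the constraint $\supp F\subseteq Y_{K,K^c}$ in \eqref{eq:delta-M-definition}, since it forces $F$ to have the form $F_1\sqcup F_2$ -- and, when this holds, one has $(\Aut F)_K=\Aut F_1\times\Aut F_2$ and $\Aut(F_1\sqcup F_2)=\Aut F$, so both sides reduce, via Frobenius reciprocity, to $\langle\res^{\Aut F}_{\Aut F_1\times\Aut F_2}\gamma,\alpha\otimes\alpha'\rangle$.

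For $\Delta\Phi=(\Phi\otimes\Phi)\Delta_{\mathcal M}$ no such shortcut is available, since $\mathcal R^{\Delta}$ is not self-dual, so I would compute directly. Fix $N$, $F\in\Map(Y_N,\widehat H)$, $\gamma\in\Rep(\Aut F)$ and write $\Phi_F(\gamma)=\ind_{G_F}^{G_N}(\gamma\ltimes\pi_F)$. For a subset $K\subseteq N$ expand $\res^N_{K,K^c}\Phi_F(\gamma)$ by the Mackey decomposition of $\res^{G_N}_{G_{K,K^c}}\ind_{G_F}^{G_N}$; because $G_F$ and $G_{K,K^c}$ both contain $\Map(Y_{K,K^c},H)$ while $\Map(Y_N,H)$ is normal in $G_N$, the double-coset space collapses to $G_{K,K^c}\backslash G_N/G_F\cong S_{K,K^c}\backslash S_N/\Aut F$. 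The combinatorial point is that $S_{K,K^c}$ is the full stabiliser of $K$ in $S_N$, so -- as $K$ runs over representatives for the $S_N$-orbits of subsets of $N$ and, for each $K$, $d$ runs over the double cosets $S_{K,K^c}\backslash S_N/\Aut F$ -- the pairs $(K,d)$ are in bijection with the $\Aut F$-orbits of subsets of $N$ (the pair $(K,d)$ matching the orbit $\Aut F(d^{-1}K)$), so the double sum defining $\Delta\Phi_F(\gamma)$ re-indexes as a single sum over $\Aut F$-orbits, each represented by its $d=1$ term for a suitable choice of $S_N$-orbit representative (the conjugations needed for other $d$ being absorbed by the $\Set^\times$-coinvariance via Lemma \ref{lem:w-i-r}). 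For the $d=1$ term attached to an orbit $\Aut F(K)$ one identifies $G_{K,K^c}\cap G_F=(\Aut F)_K\ltimes\Map(Y_{K,K^c},H)$ and decomposes $\bigotimes_{y\in Y_N}V_{F(y)}$ along $Y_N=Y_K\sqcup Y_{K^c}\sqcup(Y_N\setminus Y_{K,K^c})$; since $(\Aut F)_K$ permutes each of these three families of tensor factors separately while $\Map(Y_{K,K^c},H)$ acts only on the first two, the restriction of $\gamma\ltimes\pi_F$ to $(\Aut F)_K\ltimes\Map(Y_{K,K^c},H)$ has the form $\beta\ltimes\pi_{F\restrict_{Y_{K,K^c}}}$ with $\beta=(\res^{\Aut F}_{(\Aut F)_K}\gamma)\otimes_{\k}\pi_{F\restrict_{Y_N\setminus Y_{K,K^c}}}$, the second tensor factor of $\beta$ carrying the permutation action of $(\Aut F)_K$. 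Inducing this to $G_{K,K^c}\cong G_K\times G_{K^c}$ and invoking the compatibility of parabolic induction with the $\ind$--$\ltimes$ construction -- i.e.\ \cite[Theorems 3.6 \& 3.14]{CMO1}, exactly as in the proof of Proposition \ref{prop:Clifford-Mackey-multiplication} but now with $\Zelev{G}=G_{K,K^c}$ and $\Zelev{L}=(\Aut F)_K\ltimes\Map(Y_{K,K^c},H)$ -- identifies the result with $(\Phi_{F\restrict_{Y_K}}\otimes\Phi_{F\restrict_{Y_{K^c}}})$ applied to $\ind_{(\Aut F)_K}^{\Aut(F\restrict_{Y_K})\times\Aut(F\restrict_{Y_{K^c}})}\beta$, which is precisely the $\Aut F(K)$-summand in \eqref{eq:Delta-M-definition}. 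Summing over all $\Aut F$-orbits gives $\Delta\Phi=(\Phi\otimes\Phi)\Delta_{\mathcal M}$.

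I expect the main obstacle to be not any single step but the simultaneous bookkeeping in the $\Delta$-computation: aligning the Mackey double-coset sum with the $\Aut F$-orbit sum of \eqref{eq:Delta-M-definition}, and -- the point most likely to need care -- correctly tracking the permutation action of $(\Aut F)_K$ on the ``off-block'' tensor factors $\pi_{F\restrict_{Y_N\setminus Y_{K,K^c}}}$ through the chain of Clifford-theoretic identifications, so that those factors reappear in \eqref{eq:Delta-M-definition} with the correct group action. Conceptually nothing is required beyond what already went into Proposition \ref{prop:Clifford-Mackey-multiplication}.
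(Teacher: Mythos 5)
Your plan is essentially correct, and for the $\epsilon$ and $\Delta$ identities it is the same argument as the paper's: the paper likewise fixes a subset $K$, applies the standard Mackey formula to $\res^N_{K,K^c}\ind_{G_F}^{G_N}(\gamma\ltimes\pi_F)$, identifies $G_F\backslash G_N/G_{K,K^c}\cong \Aut F\backslash S_N/S_{K,K^c}$ (the operative reason being that $G_F$ contains the full normal base group $\Map(Y_N,H)$, rather than the containment of $\Map(Y_{K,K^c},H)$ you cite), re-indexes the Mackey sum by the $\Aut F$-orbits inside $S_N(K)$, identifies $G_{L,L^c}\cap G_F=(\Aut F)_L\ltimes\Map(Y_{L,L^c},H)$, and rewrites the restriction of $\gamma\ltimes\pi_F$ as $\bigl((\res^{\Aut F}_{(\Aut F)_L}\gamma)\otimes_{\k}\pi_{F\restrict_{Y_N\setminus Y_{L,L^c}}}\bigr)\ltimes(\pi_{F\restrict_{Y_L}}\otimes_{\k}\pi_{F\restrict_{Y_{L^c}}})$ exactly as you propose. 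The only divergence is the last step, where the paper uses induction in stages together with the elementary identity $\ind_{(\Aut F)_L\ltimes B_L}^{A_L\ltimes B_L}(\beta\ltimes\chi)=(\ind_{(\Aut F)_L}^{A_L}\beta)\ltimes\chi$ and the definition of $\Phi$ on $G_{L,L^c}\cong G_L\times G_{L^c}$, whereas you re-invoke \cite{CMO1}; either works.

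For $\delta$ your route is genuinely different. The paper deduces $\delta\Phi=(\Phi\otimes\Phi)\delta_{\mathcal M}$ from the $\Delta$ computation by projecting each Mackey summand onto its $U^N_{L,L^c}$-invariants and observing that this space is the whole summand when $\supp F\subseteq Y_{L,L^c}$ and zero otherwise; you instead use self-duality of $\mathcal R^{\delta}$, uniqueness of adjoints, and a Frobenius-reciprocity check that $\delta_{\mathcal M}$ is adjoint to the multiplication of $\mathcal M$. The reduction to that adjointness statement is valid, but your verification of it has one unaddressed point. When $\supp F\subseteq Y_{K,K^c}$ the induction in \eqref{eq:delta-M-definition} is along an equality of groups, so $\langle\delta_{\mathcal M}\gamma,\alpha\otimes\alpha'\rangle$ is a \emph{sum} of terms $\langle\res^{\Aut F}_{(\Aut F)_K}\gamma,\alpha\otimes\alpha'\rangle$, one for each $\Aut F$-orbit of admissible subsets $K$, while $\langle\gamma,\ind_{\Aut F_1\times\Aut F_2}^{\Aut(F_1\sqcup F_2)}(\alpha\otimes\alpha')\rangle$ produces a single such term. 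You therefore need to show that at most one orbit contributes: if $K$ and $L$ both satisfy $\supp F\subseteq Y_{K,K^c}$, $\supp F\subseteq Y_{L,L^c}$, and both pairs of restrictions are isomorphic to $(F_1,F_2)$, then $K$ and $L$ lie in the same $\Aut F$-orbit. This is true and quick: glue the resulting bijections $K\to L$ and $K^c\to L^c$ to a permutation $w$ of $N$; then $wF$ agrees with $F$ on $Y_{L,L^c}$ by construction, and off $Y_{L,L^c}$ both $wF$ and $F$ take the value $\triv_H$ because $\supp F\subseteq Y_{K,K^c}$ and $\supp F\subseteq Y_{L,L^c}$ respectively; hence $w\in\Aut F$ and $wK=L$. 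With this addendum your adjointness check, and hence your proof of the $\delta$ identity and the transport-of-structure conclusions, is complete.
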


\begin{proof}
The identity $\epsilon\Phi=\epsilon_{\mathcal M}$ is easily verified: $\Phi$ is an isomorphism of unital graded algebras, and $\epsilon$ and $\epsilon_{\mathcal M}$ are the inverses of the respective unit maps. 

To verify the formula for $\Delta_{\mathcal M}$ we fix a function $F\in \Map(Y_N,\widehat{H})$, a representation $\gamma$ of $\Aut F$, and a subset $K\subseteq N$. We will prove that the term
\begin{equation}\label{eq:M-Hopf-1}
(\Delta \Phi \gamma)_K \coloneqq \res^N_{K,K^c}  \ind_{G_F}^{G_N}(\gamma\ltimes \pi_F)
\end{equation}
corresponding to the orbit $S_N(K)$ in the definition of $\Delta$ is equal to the sum
\begin{equation}\label{eq:M-Hopf-2}
\begin{aligned}
& (\Phi\otimes\Phi)(\Delta_{\mathcal M}\gamma)_K \\
& = \sum_{\Aut F(L)\in \orbits{\Aut F}{S_N(K)}} (\Phi\otimes\Phi)\ind_{(\Aut F)_L}^{\Aut(F\restrict_{Y_L})\times \Aut(F\restrict_{Y_{L^c}})}\big( (\res^{\Aut F}_{(\Aut F)_L} \gamma) \otimes_{\k} \pi_{F\restrict_{Y_N\setminus Y_{L,L^c}}} \big)
\end{aligned}
\end{equation}
of the images, under $\Phi\otimes\Phi$ of the terms in the sum \eqref{eq:Delta-M-definition} associated to the $\Aut F$-orbits in $S_N(K)$.

Choose a set $W$ of representatives for the double-coset space $\Aut F \backslash S_N / S_{K,K^c}$. Observing that
\[
\begin{aligned}
G_F\backslash G_N/ G_{K,K^c} & = (\Aut F\ltimes \Map({Y_{N}},H))\backslash (S_N\ltimes \Map({Y_N},H)) / (S_{K,K^c} \ltimes \Map({Y_{K,K^c}},H)) \\ 
& \cong \Aut F\backslash S_N/ S_{K,K^c}
\end{aligned}
\]
shows that $W$ is also a set of representatives for $G_F \backslash G_N /G_{K,K^c}$. 

Now $S_{K,K^c}$ is precisely the isotropy group of $K$ for the action of $S_N$ on $\Power{N}$, and so the map $w\mapsto \Aut F(wK)$ gives a bijection $W\cong \Aut F\backslash S_N(K)$. 
Applying the standard Mackey formula \cite[Theorem 1]{Mackey} to \eqref{eq:M-Hopf-1}, using the set of double-coset representatives $W$ and recalling that the relation $\Ad_w\rho=\rho$ holds in $\mathcal R$, we find
\begin{equation}\label{eq:Mackey-coproduct-step2}
\begin{aligned}
\big(\Delta\Phi\gamma\big)_{K} & = \sum_{w\in W} \Ad_{w^{-1}} \ind_{{}^wG_{K,K^c}\cap G_F}^{{}^w G_{K,K^c}} \res^{G_F}_{{}^wG_{K,K^c}\cap G_F} (\gamma\ltimes \pi_F) \\
& = \sum_{\Aut F(L)\in \Aut F\backslash S_N(K)} \ind_{G_{L,L^c}\cap G_F}^{G_{L,L^c}} \res^{G_F}_{G_{L,L^c}\cap G_F}(\gamma\ltimes \pi_F).
\end{aligned}
\end{equation}

For each $L\in S_N(K)$ we have $G_{L,L^c}\cap G_F = (\Aut F)_L \ltimes \Map({Y_{L,L^c}},H)$. The restriction of the representation $\gamma\ltimes \pi_F$ to this group is equal to 
\[
(\res^{\Aut F}_{(\Aut F)_L} \gamma)\ltimes (\pi_{F\restrict_{Y_L}}\otimes_{\k}\pi_{F\restrict_{Y_{L^c}}}\otimes_{\k} \pi_{F\restrict_{Y_N\setminus Y_{L,L^c}}}).
\]
The group $\Map({Y_{L,L^c}},H)$ acts trivially in the representation $\pi_{F\restrict_{Y_N\setminus Y_{L,L^c}}}$, so we may rewrite this last displayed representation as
\[
\big( (\res^{\Aut F}_{(\Aut F)_L} \gamma) \otimes_{\k} \pi_{F\restrict_{Y_N\setminus Y_{L,L^c}}} \big) \ltimes (\pi_{F\restrict_{Y_L}}\otimes_{\k}\pi_{F\restrict_{Y_{L^c}}}).
\]
Temporarily writing $A_L\coloneqq \Aut(F\restrict_{Y_L})\times \Aut (F\restrict_{Y_{L^c}})$ and $B_L\coloneqq \Map({Y_{L,L^c}},H)$ to compactify the notation, we continue the computation from \eqref{eq:Mackey-coproduct-step2} to find
\begin{equation*}\label{eq:Mackey-coproduct-step3}
\begin{aligned}
& \big(\Delta\Phi\gamma\big)_{K}  = \sum_{\Aut F(L)} \ind_{(\Aut F)_L \ltimes B_L}^{G_{L,L^c}} \big(\big( (\res^{\Aut F}_{(\Aut F)_L} \gamma) \otimes_{\k} \pi_{F\restrict_{Y_N\setminus Y_{L,L^c}}} \big)\ltimes (\pi_{F\restrict_L}\otimes_{\k}\pi_{F\restrict_{L^c}}) \big) \\
& = \sum_{\Aut F(L)} \ind_{A_L\ltimes B_L}^{G_{L,L^c}} \ind_{(\Aut F)_L \ltimes B_L}^{A_L\ltimes B_L} \big(\big( (\res^{\Aut F}_{(\Aut F)_L} \gamma) \otimes_{\k} \pi_{F\restrict_{Y_N\setminus Y_{L,L^c}}} \big) \ltimes (\pi_{F\restrict_L}\otimes_{\k}\pi_{F\restrict_{L^c}}) \big) \\
& = \sum_{\Aut F(L)} \ind_{A_L\ltimes B_L}^{G_{L,L^c}} \bigg( \Big( \ind_{(\Aut F)_L}^{A_L} \big( (\res^{\Aut F}_{(\Aut F)_L} \gamma) \otimes_{\k} \pi_{F\restrict_{Y_N\setminus Y_{L,L^c}}} \big)\Big) \ltimes (\pi_{F\restrict_L}\otimes_{\k} \pi_{F\restrict_{L^c}})\bigg) \\
& = \sum_{\Aut F(L)} (\Phi\otimes\Phi) \ind_{(\Aut F)_L}^{A_L} \big( (\res^{\Aut F}_{(\Aut F)_L} \gamma) \otimes_{\k} \pi_{F\restrict_{Y_N\setminus Y_{L,L^c}}} \big) = (\Phi\otimes\Phi)(\Delta_{\mathcal M}\gamma)_K
\end{aligned}
\end{equation*}
as required.

We turn now to the relation $\delta\Phi=(\Phi\otimes\Phi)\delta_{\mathcal M}$, keeping all of the notation established so far. To obtain $(\delta\Phi\gamma)_{K}$ from $(\Delta\Phi\gamma)_{K}$ we must project the latter onto the space of $U_{K,K^c}^N$-fixed vectors. Equivalently, we must project each of the representations $\rho_L\coloneqq \ind_{G_{L,L^c}\cap G_F}^{G_{L,L^c}} \res^{G_F}_{G_{L,L^c}\cap G_F}(\gamma\ltimes \pi_F)$ occuring in the last line of \eqref{eq:Mackey-coproduct-step2} onto its subspace of $U_{L,L^c}^N$-invariants. The group $U_{L,L^c}^N$ acts on $\rho_L$ by a sum of $S_{L,L^c}$-conjugates of the irreducible representation $\pi_{F\restrict_{Y_N\setminus {Y_{L,L^c}}}}$, and so the space of $U_{L,L^c}^N$-fixed vectors will be zero if one of the factors $F(y)$ (for $y\in Y_N\setminus Y_{L,L^c}$) is nontrivial; while on the other hand this space of invariants will be all of $\rho_L$ if all of the $F(y)$ are trivial. Since $F(y)=\triv_H$ for all $y\in Y_N\setminus Y_{L,L^c}$ precisely when $\supp F\subset Y_{L,L^c}$, we obtain from \eqref{eq:Mackey-coproduct-step3}
\[
\begin{aligned}
 \big( \delta \Phi\gamma\big)_{K}  &
=  \sum_{\substack{\Aut F(L)\in \Aut F\backslash S_N(K) \\ \supp F\subseteq Y_{L,L^c}}} (\Phi\otimes\Phi)\ind_{(\Aut F)_L}^{\Aut(F\restrict_{Y_L})\times \Aut(F\restrict_{Y_{L^c}})}\big( (\res^{\Aut F}_{(\Aut F)_L} \gamma) \otimes_{\k} \pi_{F\restrict_{Y_N\setminus Y_{L,L^c}}} \big) \\
& =  \sum_{\substack{\Aut F(L)\in \Aut F\backslash S_N(K) \\ \supp F\subseteq Y_{L,L^c}}} (\Phi\otimes\Phi)\ind_{(\Aut F)_L}^{\Aut(F\restrict_{Y_L})\times \Aut(F\restrict_{Y_{L^c}})} \res^{\Aut F}_{(\Aut F)_L} \gamma \\
& = (\Phi\otimes\Phi)(\delta_{\mathcal M}\gamma)_K
\end{aligned}
\]
as required.
\end{proof}

\begin{remark}[Structure of the PSH algebra $\mathcal M^\delta_{Y,H}$]
Zelevinsky's structure theorem for PSH algebras \cite[Theorems 2.2 \& 3.1]{Zelevinsky} identifies the PSH algebra $\mathcal M^\delta_{Y,H}$ with a tensor product of copies of the Hopf algebra $\Sym_{\Z}$ of symmetric functions, indexed by the set of primitive irreducible elements of $\mathcal M^\delta_{Y,H}$. (Here \emph{irreducible elements} are, by definition, elements of the distinguished basis \eqref{eq:M-basis}.) The set of primitive irreducibles can readily be identified from the formula \eqref{eq:delta-M-definition} for the comultiplication $\delta_{\mathcal M}$. For each finite set $N\neq \emptyset$  let us call a function $F\in \Map(Y_N,\widehat{H})$ \emph{primitive} if $\supp F \not\subseteq Y_{K,K^c}$ for any $K\subsetneq N$. We denote by $\Map(Y_N,\widehat{H})_{\prim}$ the set of all such functions.
The empty function $F_\emptyset \in \Map({Y_\emptyset},\widehat{H})$ is, by definition, not primitive.

For example:
\begin{enumerate}[\rm(1)]
\item If $\#N=1$ then every function in $\Map(Y_N,\widehat{H})$ is primitive. If $Y=\id$ then these are the only primitive functions.
\item For $Y_N=N^2$, if we identify $\Map(Y_N,\widehat{H})$ with the set of $N\times N$ matrices with entries in $\widehat{H}$, then the non-primitive functions are those whose corresponding matrix can be put into block-diagonal form $\left[\begin{smallmatrix} \bigast  & \triv_H \\ \triv_H & \bigast  \end{smallmatrix}\right]$  by conjugating by a permutation matrix. 
\item For $Y_N=S_N\setminus \{\id_N\}$ a function $F\in \Map(Y_N,\widehat{H})$ is primitive if and only if its support  generates a transitive subgroup of $S_N$.
\end{enumerate}

The set of irreducible primitive elements of $\mathcal M^\delta_{Y,H}$ is now the following subset of the canonical basis:
\[
\Prim(Y,H) \coloneqq \bigg( \bigsqcup_{\substack{N\neq\emptyset\\ F\in \Map(Y_N,\widehat{H})_{\prim}}} \Irr(\Aut F) \bigg)_{\Set^\times}.
\]
As noted in \cite[4.19, 7.4]{Zelevinsky}, the structure theory of PSH algebras gives a parametrisation of the irreducible representations of the groups $G_N(Y,H)$ in terms of partition-valued functions on the set $\Prim(Y,H)$. In contrast to the cases $Y_N=\emptyset$ and $Y_N=N$ considered in \cite{Zelevinsky}, this parametrisation for a general Young set does not necessarily reduce the classification of the irreducible representations of the $G_N(Y,H)$s to a manageable problem. In the example considered in Section \ref{sec:graphs}, for instance, the set of primitive irreducibles contains all irreducible representations of all finite groups; see Remark \ref{rem:graph-PSH}.
\end{remark}

\subsection{The basic subalgebra}\label{subsec:basic}

We continue to fix a Young set $Y$ and auxiliary group $H$, and often omit them from the notation. We are going to construct Hopf subalgebras $\mathcal B^{\Delta/\delta}$ of our Hopf algebras $\mathcal M^{\Delta/\delta}\cong \mathcal R^{\Delta/\delta}$ from the representations of the  {base group} $\Map(Y_N,H)\subseteq G_N$. When $H$ is abelian the algebra $\mathcal B^{\Delta}$ is the Hopf algebra associated by Schmitt in \cite[Section 3.3]{Schmitt-HACS} to the coherent exponential species $N\mapsto \Map(Y_N,\widehat{H})$; see Proposition \ref{prop:HACS}.

As an additive group we define
\[
\mathcal B=\mathcal B_{Y,H}\coloneqq \big(\bigoplus_{N\in \Set }R(\Map(Y_N,H)) \big)_{\Set^\times} 
\]
where the subscript $\Set^\times$ again indicates coinvariants by set isomorphisms: that is, we impose the relation $\pi=\Ad_w\pi$ in $\mathcal B$ for all representations $\pi$ of $\Map(Y_N,H)$ and all bijective maps $w:N\to M$. Thus $\mathcal B$ is a free abelian group with basis 
\[
\big( \bigsqcup_{N} \{\pi_F\ |\ F\in\Map(Y_N,\widehat{H})\}\big)_{\Set^\times}.
\] 
We grade $\mathcal B$ by putting $R(\Map(Y_N,H))$ in degree $\#N$.

The operation 
\[
\Map(Y_K,\widehat{H})\times \Map(Y_L,\widehat{H}) \xrightarrow{(F_K,F_L)\mapsto F_K\sqcup F_L} \Map({Y_{K\sqcup L}},\widehat{H})
\]
of \eqref{eq:F-sqcup} induces a multiplication 
\[
R(\Map({Y_K},H))\otimes_{\Z} R(\Map({Y_L},H)) \to R(\Map(Y_{K\sqcup L},H)),
\]
turning  $\mathcal B$ into an associative graded algebra, with unit $\pi_{F_\emptyset}$ (the one-dimensional trivial representation of the trivial group $\Map({Y_\emptyset},H)$). We define the counit $\epsilon_{\mathcal B}$ by $\epsilon_{\mathcal B}\pi_{F_\emptyset}=1$ and $\epsilon_{\mathcal B}\pi_F=0$ for all other $F$.

Given $F\in \Map(Y_N,\widehat{H})$ we define
\begin{equation}\label{eq:Delta-B-def}
\begin{aligned}
\Delta_{\mathcal B} \pi_F  & \coloneqq \sum_{K\subseteq N} \left(\dim \pi_{F\restrict_{Y_N\setminus Y_{K,K^c}}}\right) \pi_{F\restrict_{Y_{K}}} \otimes \pi_{F\restrict_{Y_{K^c}}}\qquad \text{and}\\
\delta_{\mathcal B} \pi_F  & \coloneqq \sum_{\substack{K\subseteq N \\ \supp F \subseteq Y_{K,K^c}}} \pi_{F\restrict_{Y_{K}}} \otimes \pi_{F\restrict_{Y_{K^c}}}.
\end{aligned}
\end{equation}
Here we have
\[
\dim \pi_{F\restrict_{Y_N\setminus Y_{K,K^c}}} = \prod_{y\in Y_N\setminus Y_{K,K^c}} \dim  F(y)
\]
where $\dim$ denotes the dimension of the underlying $\k$-vector space. When $H$ is abelian all of these dimensions are $1$ and so the formula for $\Delta_{\mathcal B}$ simplifies to
\begin{equation}\label{eq:Delta-B-abelian}
\qquad \qquad \qquad \Delta_{\mathcal B} \pi_F = \sum_{K\subseteq N}  \pi_{F\restrict_{Y_{K}}} \otimes \pi_{F\restrict_{Y_{K^c}}} \qquad\qquad \text{($H$ abelian)}.
\end{equation}

\begin{example}\label{example:binomial-Hopf-algebra}
Taking $H=1$ the trivial group, and $Y$ an arbitrary Young set, there is a unique $F_N\in \Map(Y_N,\widehat{H})$ for each finite set $N$, and the map $\pi_{F_N}\mapsto x^{\#N}$ induces an isomorphism $\mathcal B^{\delta}_{Y,1}=\mathcal B^{\Delta}_{Y,1}\xrightarrow{\cong} \Z[x]$ to the binomial Hopf algebra over $\Z$, i.e., $\Z[x]$ with its usual multiplication and with comultiplication $\Delta(x^n) = \sum_{k=0}^n \binom{n}{k} x^k\otimes x^{n-k}$.
\end{example}

For each finite group $G$ we let $\reg_G$ be the regular representation: i.e., the representation on $\k^G$ by permuting coordinates.

\begin{proposition}\label{prop:regular-embedding}
The map
\[
\reg: \mathcal B \to \mathcal M,\qquad \pi_F \mapsto \reg_{\Aut F}
\]
is an embedding of unital graded algebras, and it satisfies
\[
\Delta_{\mathcal M} \reg = (\reg\otimes \reg)\Delta_{\mathcal B},\quad 
\delta_{\mathcal M}\reg = (\reg\otimes\reg)\delta_{\mathcal B},\quad \text{and} \quad
\epsilon_{\mathcal M}\reg = \reg \epsilon_{\mathcal B}.
\]
Thus the comultiplication maps $\Delta_{\mathcal B}$ and $\delta_{\mathcal B}$ each equip $\mathcal B$ with the structure of a connected, commutative and cocommutative graded Hopf algebra.
\end{proposition}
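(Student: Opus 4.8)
The plan is to verify that the map $\reg$ intertwines all of the structure maps, and then to deduce that $\mathcal{B}$ becomes a Hopf algebra by transport of the relevant identities through this intertwiner and through the identification $\mathcal{M}\cong\mathcal{R}$. First I would check that $\reg$ is a well-defined morphism of graded algebras. Well-definedness on $\Set^\times$-coinvariants is immediate: a bijection $w:N\to M$ carries $F$ to $wF$ with ${}^w\Aut F=\Aut wF$, so $\reg_{\Aut F}$ and $\reg_{\Aut wF}$ are identified in $\mathcal{M}$. That $\reg$ preserves degree and the unit ($\reg_{\Aut F_\emptyset}$ being the trivial representation of the trivial group) is clear. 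For multiplicativity one must show that, for $F_K\in\Map(Y_K,\widehat H)$ and $F_L\in\Map(Y_L,\widehat H)$, the induction $\ind_{\Aut F_K\times\Aut F_L}^{\Aut(F_K\sqcup F_L)}(\reg_{\Aut F_K}\otimes\reg_{\Aut F_L})$ equals $\reg_{\Aut(F_K\sqcup F_L)}$. Here I would use the standard fact that induction of a regular representation along a subgroup inclusion $A\subseteq B$ yields $[B:A]$ copies of $\reg_B$, together with the observation that $\#\Aut(F_K\sqcup F_L)$ need not equal $\#\Aut F_K\cdot\#\Aut F_L$ --- so $\reg$ is not multiplicative on the nose! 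I should therefore re-examine: $\Aut(F_K\sqcup F_L)$ contains $\Aut F_K\times\Aut F_L$, but extending $F_K,F_L$ by $\triv_H$ can create extra automorphisms (e.g. permuting the $Y_{K\sqcup L}\setminus(Y_K\sqcup Y_L)$ part together with $K$-, $L$-blocks). The correct claim is still an equality in $\mathcal B$ because the multiplication on $\mathcal B$ is defined via $F_K\sqcup F_L$ on the level of $\pi$'s, not of automorphism groups; and on the $\mathcal M$-side the algebra map $\Phi$ of Proposition~\ref{prop:Clifford-Mackey-multiplication} already encodes the passage through $\Aut(F_K\sqcup F_L)$. So the cleanest route is: $\Phi\circ\reg$ sends $\pi_F$ to $\ind_{G_F}^{G_N}(\reg_{\Aut F}\ltimes\pi_F)$, and I claim this is $\ind_{\Map(Y_N,H)}^{G_N}\pi_F$, because inducing $\reg_{\Aut F}$ from the trivial group to $\Aut F$ and then inflating/inducing is the same as inducing $\pi_F$ directly from $\Map(Y_N,H)$ up through $G_F$ to $G_N$. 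Granting this identification, multiplicativity of $\reg$ follows from multiplicativity of the functor $\pi\mapsto\ind_{\Map(Y_N,H)}^{G_N}\pi$ with respect to $\sqcup$, which is a direct consequence of transitivity of induction and the isomorphism $G_{K,L}\cong G_K\times G_L$.

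Next I would verify the three comultiplication/counit identities. For $\epsilon$ this is trivial: both sides send $\pi_{F_\emptyset}$ to $1$ and everything else to $0$ (note $\reg_{\Aut F_\emptyset}$ is one-dimensional). For $\Delta$, the key point is to compare, for each $F\in\Map(Y_N,\widehat H)$, the element $\Delta_{\mathcal M}\reg_{\Aut F}$ given by formula~\eqref{eq:Delta-M-definition} with $(\reg\otimes\reg)\Delta_{\mathcal B}\pi_F$ from~\eqref{eq:Delta-B-def}. In~\eqref{eq:Delta-M-definition} the sum is over $\Aut F$-orbits $\Aut F(K)$ in $\Power N$, and the summand is $\ind_{(\Aut F)_K}^{\Aut(F|_{Y_K})\times\Aut(F|_{Y_{K^c}})}\bigl((\res^{\Aut F}_{(\Aut F)_K}\reg_{\Aut F})\otimes\pi_{F|_{Y_N\setminus Y_{K,K^c}}}\bigr)$. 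Now $\res^{\Aut F}_{(\Aut F)_K}\reg_{\Aut F}=[\Aut F:(\Aut F)_K]\cdot\reg_{(\Aut F)_K}$, and $[\Aut F:(\Aut F)_K]=\#\Aut F(K)$ is the size of the orbit; so summing over orbit representatives and multiplying by the orbit size converts the sum over orbits into a sum over \emph{all} subsets $K\subseteq N$, with summand $\ind_{(\Aut F)_K}^{\Aut(F|_{Y_K})\times\Aut(F|_{Y_{K^c}})}\bigl(\reg_{(\Aut F)_K}\otimes\pi_{F|_{Y_N\setminus Y_{K,K^c}}}\bigr)$. Inducing $\reg_{(\Aut F)_K}\otimes\pi_{F|_{\cdots}}$ --- where $\pi_{F|_{\cdots}}$ has dimension $\dim\pi_{F|_{Y_N\setminus Y_{K,K^c}}}$ --- from $(\Aut F)_K$ up to $\Aut(F|_{Y_K})\times\Aut(F|_{Y_{K^c}})$ produces $\dim\pi_{F|_{Y_N\setminus Y_{K,K^c}}}$ copies of $\reg$ of the bigger group, because inducing the regular representation (tensored with any representation $\sigma$) from a subgroup yields $\dim\sigma$ copies of the regular representation of the overgroup. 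This matches~\eqref{eq:Delta-B-def} exactly after applying $\reg\otimes\reg$ (noting $\reg_{\Aut(F|_{Y_K})}\otimes\reg_{\Aut(F|_{Y_{K^c}})}=\reg_{\Aut(F|_{Y_K})\times\Aut(F|_{Y_{K^c}})}$). The identity for $\delta$ is the same computation restricted to those $K$ with $\supp F\subseteq Y_{K,K^c}$, which is exactly the index set of~\eqref{eq:delta-M-definition} and of the second line of~\eqref{eq:Delta-B-def}.

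Having established the intertwining identities, I would conclude as follows. Since $\Phi:\mathcal M\to\mathcal R$ is an isomorphism of graded algebras and (via Corollary~\ref{cor:M-Hopf}) transports the Hopf structures, $\mathcal M^{\Delta}$ and $\mathcal M^{\delta}$ are Hopf algebras; the image of $\reg$ is a graded subalgebra of $\mathcal M$, and the intertwining identities show $\Delta_{\mathcal M}$, $\delta_{\mathcal M}$, $\epsilon_{\mathcal M}$ restrict to maps $\reg(\mathcal B)\to\reg(\mathcal B)\otimes\reg(\mathcal B)$ and $\reg(\mathcal B)\to\Z$ compatible with $\Delta_{\mathcal B}$, $\delta_{\mathcal B}$, $\epsilon_{\mathcal B}$ under the (injective, hence bijective onto its image) map $\reg$. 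Therefore $(\mathcal B,m,\Delta_{\mathcal B},e,\epsilon_{\mathcal B})$ and $(\mathcal B,m,\delta_{\mathcal B},e,\epsilon_{\mathcal B})$ are graded bialgebras isomorphic to subbialgebras of $\mathcal M^{\Delta}$ and $\mathcal M^{\delta}$. Connectedness of $\mathcal B$ (degree-zero part is $\Z$, spanned by $\pi_{F_\emptyset}$) then upgrades each to a Hopf algebra by the standard fact that a connected graded bialgebra over a commutative ring admits an antipode; commutativity and cocommutativity are inherited from $\mathcal M^{\Delta}$ (resp. $\mathcal M^{\delta}$), or may be read off directly from the symmetry of formulas~\eqref{eq:F-sqcup} and~\eqref{eq:Delta-B-def} in $K\leftrightarrow L$.

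\medskip

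\emph{The main obstacle} I anticipate is the multiplicativity of $\reg$: naively $\reg_{\Aut(F_K\sqcup F_L)}$ is \emph{larger} than the induction of $\reg_{\Aut F_K}\otimes\reg_{\Aut F_L}$ from $\Aut F_K\times\Aut F_L$, because $\sqcup$ can introduce new automorphisms acting on the padded-by-$\triv_H$ part of $Y_{K\sqcup L}$. The resolution --- and the step requiring the most care --- is to factor everything through $\Phi$ and recognise $\Phi\circ\reg$ as the single functor $\pi_F\mapsto\ind_{\Map(Y_N,H)}^{G_N}\pi_F$, whose compatibility with $\sqcup$ and with $\Delta/\delta$ then follows from transitivity of induction and the Mackey formula (Proposition~\ref{prop:Mackey}) together with the counting identity $\res^{\Aut F}_{(\Aut F)_K}\reg_{\Aut F}=\#\Aut F(K)\cdot\reg_{(\Aut F)_K}$ that converts orbit sums into full subset sums. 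Once that reinterpretation is in hand, all remaining verifications are routine bookkeeping with induction and restriction of regular representations.
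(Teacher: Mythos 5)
Your verification of the comultiplication identities is correct and is essentially the paper's argument: rewrite the orbit sum using $\res^{\Aut F}_{(\Aut F)_K}\reg_{\Aut F}=[\Aut F:(\Aut F)_K]\,\reg_{(\Aut F)_K}$ and $\reg_{(\Aut F)_K}\otimes_{\k}\sigma\cong(\dim\sigma)\,\reg_{(\Aut F)_K}$, then use that induction carries regular representations to regular representations; the $\delta$ case is the same sum restricted to $\supp F\subseteq Y_{K,K^c}$. Your concluding transport-of-structure argument is also fine.

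However, your treatment of multiplicativity rests on a false ``standard fact.'' Induction does \emph{not} send $\reg_A$ to $[B:A]$ copies of $\reg_B$ for $A\subseteq B$; it sends it to exactly one copy, since $\ind_A^B\k[A]=\k[B]\otimes_{\k[A]}\k[A]=\k[B]$ (equivalently, $\ind_A^B\ind_1^A\triv=\ind_1^B\triv$; note the dimension count $[B:A]\cdot\#A=\#B$). It is \emph{restriction} that multiplies by the index, as you yourself correctly use later. Consequently the obstacle you flag as the main difficulty does not exist: even though $\Aut(F_K\sqcup F_L)$ is typically larger than $\Aut F_K\times\Aut F_L$, inducing $\reg_{\Aut F_K}\otimes_{\k}\reg_{\Aut F_L}=\reg_{\Aut F_K\times\Aut F_L}$ up to $\Aut(F_K\sqcup F_L)$ gives precisely $\reg_{\Aut(F_K\sqcup F_L)}$, which is the image under $\reg$ of $\pi_{F_K}\pi_{F_L}=\pi_{F_K\sqcup F_L}$ --- this one-line argument is exactly how the paper proves multiplicativity. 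Your workaround via $\Phi$ is not wrong in substance (the identification $\Phi(\reg_{\Aut F})\cong\ind_{\Map(Y_N,H)}^{G_N}\pi_F$ does hold, by the projection formula applied to the extension $\triv\ltimes\pi_F$ of $\pi_F$ to $G_F$), but as written it is both unnecessary and incomplete: the multiplication on $\mathcal R$ is $\pind_{K,L}^{K\sqcup L}$, i.e.\ inflation over $U_{K,L}^{K\sqcup L}$ followed by induction, not plain induction, so ``transitivity of induction'' alone does not finish the check; you would still need to observe that $\Map(Y_{K\sqcup L},H)=\Map(Y_{K,L},H)\times U_{K,L}^{K\sqcup L}$ and that extending $\pi_{F_K}\otimes_{\k}\pi_{F_L}$ by the trivial character of $U_{K,L}^{K\sqcup L}$ yields $\pi_{F_K\sqcup F_L}$. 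With the induction fact corrected, the direct argument makes the detour superfluous and the rest of your proof goes through.
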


Note that the map $\reg:\mathcal B\to \mathcal M$ does not send irreducibles to irreducibles. In particular, $\mathcal B^\delta$ is not a PSH algebra, as is already evident in Example \ref{example:binomial-Hopf-algebra}.

\begin{proof}
The map $\reg$ is clearly injective, graded, and intertwines the units and counits. It is also easy to see that $\reg$ is a morphism of algebras: given $F_K\in \Map(Y_K,\widehat{H})$ and $F_L\in \Map(Y_L,\widehat{H})$, the tensor product $\reg_{F_K}\otimes_{\k} \reg_{F_L}$ is the regular representation of $\Aut F_K \times \Aut F_L$, and performing the multiplication in $\mathcal M$---i.e., inducing this representation  up to $\Aut(F_K\sqcup F_K)$---gives the regular representation of  $\Aut(F_K\sqcup F_K)$.

It remains to prove that $\Delta_{\mathcal M}\reg = (\reg\otimes\reg)\Delta_{\mathcal B}$ and $\delta_{\mathcal M}\reg=(\reg\otimes\reg)\delta_{\mathcal B}$. To do this we first note that for each $w\in \Aut F$ and each $K\subseteq N$ we have $\pi_{F\restrict_{Y_{wK}}}=\pi_{F\restrict_{wY_K}} = \Ad_{w} \pi_{(w^{-1}F)\restrict_{Y_K}} = \pi_{F\restrict_{Y_K}}$ in $\mathcal B$. So the summands in the definition \eqref{eq:Delta-B-def} are constant on the $\Aut F$-orbits in $\Power{N}$. The number of sets $wK$ in the orbit $\Aut F(K)$ is equal to the index $[\Aut F : (\Aut F)_K]$, and so we may rewrite the definitions as follows:
\[
\begin{aligned}
\Delta_{\mathcal B} \pi_F  & \coloneqq \sum_{\Aut F(K) \in \Aut F\backslash \Power{N}} \left(\dim \pi_{F\restrict_{Y_N\setminus Y_{K,K^c}}}\right) [\Aut F: (\Aut F)_K] \pi_{F\restrict_{Y_{K}}} \otimes \pi_{F\restrict_{Y_{K^c}}}\\
\delta_{\mathcal B} \pi_F  & \coloneqq \sum_{\substack{\Aut F(K)\in \Aut F\backslash \Power{N}  \\ \supp F \subseteq Y_{K,K^c}}} [\Aut F: (\Aut F)_K] \pi_{F\restrict_{Y_{K}}} \otimes \pi_{F\restrict_{Y_{K^c}}}.
\end{aligned}
\]

Comparing the above formulas with the definitions \eqref{eq:Delta-M-definition} and \eqref{eq:delta-M-definition} of $\Delta_{\mathcal M}$ and $\delta_{\mathcal M}$, we see that we must prove that for all $K\subseteq N$ and all  $F\in \Map(Y_N,\widehat{H})$ that 
\[
\begin{aligned}
& \ind_{(\Aut F)_K}^{\Aut(F\restrict_{Y_K})\times \Aut(F\restrict_{Y_{K^c}})}\big( ( \res^{\Aut F}_{(\Aut F)_K} \reg_{\Aut F})\otimes_{\k}\pi_{F\restrict_{Y_N\setminus Y_{K,K^c}}}\big)\\
 & = (\dim \pi_{F\restrict_{Y_N\setminus Y_{K,K^c}}}) [\Aut F : (\Aut F)_{K}] \reg_{\Aut(F\restrict_{Y_K})\times \Aut( F\restrict_{Y_{K^c}})}.
\end{aligned}
\]
Since $\ind$ sends regular representations to regular representations it will suffice to prove that 
\begin{equation}\label{eq:B-proof-last}
(\res^{\Aut F}_{(\Aut F)_{K}} \reg_{\Aut F})\otimes_{\k} \pi_{F\restrict_{Y_N\setminus Y_{K,K^c}}} = (\dim \pi_{F\restrict_{Y_N\setminus Y_{K,K^c}}}) [\Aut F : (\Aut F)_{K}] \reg_{(\Aut F)_K}.
\end{equation}
For every group $G$, subgroup $G'$, and representation $\rho\in \Rep(G')$ we have  
\[
\res^{G}_{G'}\reg_{G} = [G: G'] \reg_{G'} \quad \text{ and }\quad \reg_{G'}\otimes_{\k} \rho = (\dim \rho)\reg_{G'}
\]
so the equality \eqref{eq:B-proof-last} does hold.
\end{proof}

When $H$ is abelian the Hopf algebra $\mathcal B^{\Delta}$ is the same as one constructed in \cite{Schmitt-HACS}, as we shall now explain.

\begin{proposition}\label{prop:HACS}
Let $Y$ be a Young set and let $H$ be a finite \emph{abelian} group.
\begin{enumerate}[\rm(1)]
\item The contravariant functor $E:\Set^{\inj}\to \Set$ defined on objects by $N\mapsto \Map(Y_N,\widehat{H})$ and on morphisms by $w \mapsto (F\mapsto F\circ Y_w)$ is a \emph{coherent exponential $R$-species} as defined in \cite[3.3]{Schmitt-HACS}: it is the exponential of the contravariant functor $\Set^\times\to \Set^\times$ given by $N\mapsto \Map(Y_N,\widehat{H})_{\prim}$.
\item The Hopf algebra $\mathcal B_{Y,H}^{\Delta}$ is isomorphic to the Hopf algebra $\mathcal B_E$ associated to the coherent exponential species $E$ in \cite[3.3]{Schmitt-HACS}.
\end{enumerate}
\end{proposition}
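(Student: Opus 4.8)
The plan is to establish (1) directly, via a canonical ``primitive factorisation'' of functions $F\in\Map(Y_N,\widehat{H})$, and then to read off (2) by comparing structure maps term by term with those of Schmitt's $\mathcal B_E$.

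For (1), the key point is that the set $\mathcal L_F:=\{\lambda\in\Part^w_N \mid \supp F\subseteq Y_\lambda\}$ contains the partition of $N$ into a single block (for which $Y_\lambda=Y_N$) and is closed under $\wedge$, since $Y_\lambda\cap Y_\mu=Y_{\lambda\wedge\mu}$ by the Young-set axiom; hence $\mathcal L_F$ has a least element $\lambda(F)\in\Part_N$. Next I would show by a refinement argument that $F\restrict_{Y_L}$ is primitive for every block $L$ of $\lambda(F)$: if some splitting $L=K\sqcup(L\setminus K)$ into nonempty parts had $\supp(F\restrict_{Y_L})\subseteq Y_K\sqcup Y_{L\setminus K}$, then subdividing $L$ into $K$ and $L\setminus K$ would produce a partition strictly below $\lambda(F)$ still lying in $\mathcal L_F$, a contradiction. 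Conversely, a partition $\lambda\in\Part_N$ decorated by primitive functions $G_L\in\Map(Y_L,\widehat{H})_{\prim}$ determines $F$ by $F\restrict_{Y_L}=G_L$ and $F(y)=\triv_H$ off $Y_\lambda$; using that $\lambda(F)\leq\lambda$ forces each block of $\lambda(F)$ to sit inside a single block of $\lambda$, one checks (again via $Y_K\cap Y_L=Y_{K\cap L}$) that necessarily $\lambda(F)=\lambda$ and $G_L=F\restrict_{Y_L}$, so the two constructions are mutually inverse. This exhibits a bijection $\Map(Y_N,\widehat{H})\cong\bigsqcup_{\lambda\in\Part_N}\prod_{L\in\lambda}\Map(Y_L,\widehat{H})_{\prim}$, natural for bijections of $N$, which is exactly the assertion that $E$ is the exponential of the species $N\mapsto\Map(Y_N,\widehat{H})_{\prim}$. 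To see that this exponential is \emph{coherent} in Schmitt's sense one checks that the factorisation is compatible with restriction along an injection $w\colon K\into N$: since $\supp(F\circ Y_w)=\supp F\cap Y_K\subseteq Y_{\lambda(F)}\cap Y_K$, the partition $\lambda(F\circ Y_w)$ refines the partition of $K$ obtained by intersecting the blocks of $\lambda(F)$ with $K$, and the decorations transform accordingly --- which is precisely the restriction map Schmitt puts on an exponential species. (This clause holds for any finite $H$; the abelian hypothesis is not needed for (1).)

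For (2), recall that $\mathcal B_E$ is the free abelian group on the isomorphism classes of $E$-structures, namely $\big(\bigsqcup_N\Map(Y_N,\widehat{H})\big)_{\Set^\times}$; with product the disjoint union of structures; with coproduct $[F]\mapsto\sum_{K\subseteq N}[F\restrict_{Y_K}]\otimes[F\restrict_{Y_{K^c}}]$, where $F\restrict_{Y_K}$ is the restriction of $F$ along the inclusion $K\into N$; and with the obvious unit and counit. The bijection $F\leftrightarrow\pi_F$ from \cite[Section 4.3]{James-Kerber}, which satisfies $\Ad_w\pi_F=\pi_{wF}$, identifies the basis of $\mathcal B_E$ with the distinguished basis of $\mathcal B_{Y,H}$. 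Under this identification: disjoint union of $E$-structures is the operation $(F_K,F_L)\mapsto F_K\sqcup F_L$ of \eqref{eq:F-sqcup}, so the multiplications agree; the units and counits visibly agree; and because $H$ is abelian the comultiplication $\Delta_{\mathcal B}$ is given by the multiplicity-free formula \eqref{eq:Delta-B-abelian}, $\Delta_{\mathcal B}\pi_F=\sum_{K\subseteq N}\pi_{F\restrict_{Y_K}}\otimes\pi_{F\restrict_{Y_{K^c}}}$, which matches the coproduct of $\mathcal B_E$ term for term. Hence $F\mapsto\pi_F$ is an isomorphism of graded Hopf algebras $\mathcal B_E\xrightarrow{\cong}\mathcal B^\Delta_{Y,H}$.

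I expect the main obstacle to be expository rather than mathematical: matching the notation and axioms of \cite[3.3]{Schmitt-HACS} --- the precise definitions of ``coherent exponential species'' and of $\mathcal B_E$ --- closely enough that the coherence clause in (1) and the term-by-term comparison in (2) are unambiguous, and keeping the abelian hypothesis in view in (2) so that the dimension factors of \eqref{eq:Delta-B-def} collapse to $1$. The genuine content --- existence and primitivity of the canonical factorisation $\lambda(F)$ --- is short and uses only the intersection axiom $Y_K\cap Y_L=Y_{K\cap L}$ together with elementary properties of the partition lattice $\Part_N$; everything else is bookkeeping.
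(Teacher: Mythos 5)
Your proposal is correct and follows essentially the same route as the paper: your $\lambda(F)$, the least element of $\{\lambda \mid \supp F\subseteq Y_\lambda\}$ (well-defined because $Y_\lambda\cap Y_\mu=Y_{\lambda\wedge\mu}$), is exactly the paper's $\bigwedge_{\supp F\subseteq Y_{\lambda'}}\lambda'$, and the coherence check and the term-by-term comparison with Schmitt's $\mathcal B_E$ (using the abelian simplification \eqref{eq:Delta-B-abelian}) are the same as in the paper's proof. Your only additions are spelled-out details the paper leaves implicit, namely the minimality argument showing each $F\restrict_{Y_{L_i}}$ is primitive and the verification that the two constructions are mutually inverse, together with the (correct) observation that part (1) does not need $H$ abelian.
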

 
 \begin{proof}
 Fix a finite set $N$ and a function $F\in \Map(Y_N,\widehat{H})$. There is a unique partition $\lambda=(L_i\ |\ i\in I)\in \Part_N$ and primitive functions $F_i\in \Map({Y_{L_i}},\widehat{H})_{\prim}$ such that $F=\bigsqcup_{i\in I} F_i$: namely, take $\lambda \coloneqq \bigwedge_{\supp F\subseteq Y_{\lambda'}} \lambda'$ and, writing $\lambda=(L_i \ |\ i\in I)$, take $F_i\coloneqq F\restrict_{L_i}$. The map sending $F$ to the assembly $\{F_i\ |\ i\in I\}$ then identifies $E$ with the exponential species $\exp \Map({Y},\widehat{H})_{\prim}$. The coherence of this species amounts to the property that for each subset $K\subseteq N$ we have $F\restrict_K = \bigsqcup_i F_i\restrict_{K\cap L_i}$, where the $L_i$ and $F_i$ are as above; this is clear, since $\supp (F\restrict_K) = K\cap \supp F \subseteq K\cap Y_\lambda$, and $F_i=F\restrict_{L_i}$. Now the identification between Schmitt's $\mathcal B_E$ and our $\mathcal B_{Y,H}^\Delta$ follows immediately from a comparison of the definitions of multiplication and comultiplication in these two Hopf algebras.
 \end{proof}

\subsection{The canonical character and symmetric functions}\label{subsec:zeta}
We conclude our general study of the Hopf algebras $\mathcal R_{Y,H}^{\Delta/\delta}$, $\mathcal M_{Y,H}^{\Delta/\delta}$, and $\mathcal B_{Y,H}^{\Delta/\delta}$ by observing  that they all carry a canonical $\Z$-valued character, and hence a canonical Hopf-algebra homomorphism into the Hopf algebra of symmetric functions. 

\begin{definition}\label{def:zeta}
Let $Y$ be a Young set and let $H$ be a finite group, and consider the algebras $\mathcal R=\mathcal R_{Y,H}$, $\mathcal M=\mathcal M_{Y,H}$, and $\mathcal B=\mathcal B_{Y,H}$. We define $\Z$-linear maps $\zeta_{\mathcal R}:\mathcal R\to \Z$, $\zeta_{\mathcal M}:\mathcal M\to \Z$ and $\zeta_{\mathcal B}:\mathcal B\to \Z$ as follows:
\begin{enumerate}[$\square$]
\item For each finite set $N$ and each $\rho\in \Irr(G_N(Y,H))$,
\[
\zeta_{\mathcal R}(\rho) = \begin{cases} 1 & \text{if }\rho=\triv_{G_N} \\ 0 & \text{otherwise.}\end{cases}
\]
\item For each $F\in \Map(Y_N,\widehat{H})$ and each $\gamma\in \Irr(\Aut F)$,
\[
\zeta_{\mathcal M}(\gamma) = \begin{cases} 1 & \text{if $\supp F=\emptyset$ and $\gamma=\triv_{\Aut F}$}\\ 0 & \text{otherwise.}
\end{cases}
\]
\item For each $F\in \Map(Y_N,\widehat{H})$,
\[
\zeta_{\mathcal B}(\pi_F) = \begin{cases} 1 & \text{if }\supp F=\emptyset \\ 0 & \text{otherwise.}\end{cases}
\]
\end{enumerate}
\end{definition}

\begin{lemma}\label{lem:zeta}
Each of the maps $\zeta$ defined above is an algebra homomorphism, and the diagram
\[
\xymatrix@C=60pt@R=30pt{
\mathcal B  \ar[r]^-{\reg} \ar[dr]_-{\zeta_{\mathcal B }} & \mathcal M  \ar[d]^-{\zeta_{\mathcal M }} \ar[r]^-{\Phi}_-{\cong} & \mathcal R \ar[dl]^-{\zeta_{\mathcal R }}\\
& \Z &
}
\]
commutes. Here $\Phi$ is the isomorphism of Proposition \ref{prop:Clifford-Mackey-multiplication}.
\end{lemma}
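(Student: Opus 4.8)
The plan is to verify each of the three claims --- that $\zeta_{\mathcal B}$, $\zeta_{\mathcal M}$, $\zeta_{\mathcal R}$ are algebra homomorphisms, and that the triangle commutes --- more or less directly from the definitions, using the fact established in Proposition \ref{prop:Clifford-Mackey-multiplication} that $\Phi$ is an isomorphism of graded algebras and the fact established in Proposition \ref{prop:regular-embedding} that $\reg$ is an embedding of graded algebras. Since all three maps are graded and take value $1$ on the respective units (the trivial representation of the trivial group in degree $0$), they automatically respect units; the content is multiplicativity.

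First I would treat $\zeta_{\mathcal B}$. On basis elements $\pi_{F_K}$, $\pi_{F_L}$ the product in $\mathcal B$ is $\pi_{F_K\sqcup F_L}$, and by the definition of $F_K\sqcup F_L$ in \eqref{eq:F-sqcup} we have $\supp(F_K\sqcup F_L)=\supp F_K\sqcup\supp F_L$. Hence $\supp(F_K\sqcup F_L)=\emptyset$ if and only if both $\supp F_K=\emptyset$ and $\supp F_L=\emptyset$, which is exactly the statement $\zeta_{\mathcal B}(\pi_{F_K}\pi_{F_L})=\zeta_{\mathcal B}(\pi_{F_K})\zeta_{\mathcal B}(\pi_{F_L})$. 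Next, for $\zeta_{\mathcal M}$: on irreducibles $\gamma_K\in\Irr(\Aut F_K)$, $\gamma_L\in\Irr(\Aut F_L)$ the product in $\mathcal M$ is $\ind_{\Aut F_K\times\Aut F_L}^{\Aut(F_K\sqcup F_L)}(\gamma_K\otimes\gamma_L)$. If either of $F_K$, $F_L$ has nonempty support then so does $F_K\sqcup F_L$, so every irreducible constituent of the induced representation is annihilated by $\zeta_{\mathcal M}$, matching the right-hand side which is zero. If both supports are empty, then $\Map(Y_K,H)$ and $\Map(Y_L,H)$ are trivial so $\Aut F_K=S_K$, $\Aut F_L=S_L$, $\Aut(F_K\sqcup F_L)=S_{K\sqcup L}$, and the relevant fact is that the trivial representation of $S_{K\sqcup L}$ appears in $\ind_{S_K\times S_L}^{S_{K\sqcup L}}(\gamma_K\otimes\gamma_L)$ with multiplicity $\langle\gamma_K,\triv\rangle\langle\gamma_L,\triv\rangle$, by Frobenius reciprocity --- precisely $\zeta_{\mathcal M}(\gamma_K)\zeta_{\mathcal M}(\gamma_L)$. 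For $\zeta_{\mathcal R}$ one argues identically, replacing $\Aut F$'s by $G_N$'s and using that $\ind_{G_{K,L}}^{G_{K\sqcup L}}$ is the multiplication on $\mathcal R$ together with Frobenius reciprocity $\langle\triv_{G_{K\sqcup L}},\ind_{G_{K,L}}^{G_{K\sqcup L}}\rho\rangle=\langle\triv_{G_{K,L}},\rho\rangle=\langle\triv_{G_K},\rho_K\rangle\langle\triv_{G_L},\rho_L\rangle$.

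For the commutativity of the triangle, the right-hand edge $\zeta_{\mathcal R}\Phi=\zeta_{\mathcal M}$ is checked on the basis \eqref{eq:M-basis}: $\Phi(\gamma)=\ind_{G_F}^{G_N}(\gamma\ltimes\pi_F)$, and by Frobenius reciprocity $\langle\triv_{G_N},\ind_{G_F}^{G_N}(\gamma\ltimes\pi_F)\rangle=\langle\triv_{G_F},\gamma\ltimes\pi_F\rangle$; the representation $\gamma\ltimes\pi_F$ restricted to the normal subgroup $\Map(Y_N,H)$ is a multiple of $\pi_F$, so this inner product is zero unless $\pi_F$ is trivial, i.e.\ $\supp F=\emptyset$, in which case $\Aut F=S_N$, $G_F=G_N$, and the inner product is $\langle\triv_{S_N},\gamma\rangle$ --- exactly $\zeta_{\mathcal M}(\gamma)$. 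For the left-hand edge $\zeta_{\mathcal M}\reg=\zeta_{\mathcal B}$, note $\reg(\pi_F)=\reg_{\Aut F}$, and $\zeta_{\mathcal M}$ applied to a representation $\eta$ of $\Aut F$ returns $\langle\triv_{\Aut F},\eta\rangle$ when $\supp F=\emptyset$ and $0$ otherwise (extending $\zeta_{\mathcal M}$ $\Z$-linearly); since $\langle\triv,\reg_{\Aut F}\rangle=1$ always, this gives $1$ when $\supp F=\emptyset$ and $0$ otherwise, which is $\zeta_{\mathcal B}(\pi_F)$.

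I do not expect a serious obstacle here; the only point requiring a little care is the bookkeeping around the support of $F_K\sqcup F_L$ versus the extension-by-$\triv_H$ in \eqref{eq:F-sqcup}, and the observation (used repeatedly) that restricting $\gamma\ltimes\pi_F$ or $\Phi(\gamma)$ to the base group $\Map(Y_N,H)$ detects $\pi_F$, so that triviality of the relevant inner product is governed exactly by $\supp F$. All three multiplicativity statements and both triangle identities reduce to Frobenius reciprocity plus the explicit formulas in Definition \ref{def:zeta}, \eqref{eq:F-sqcup}, \eqref{eq:M-mult-definition}, and Propositions \ref{prop:Clifford-Mackey-multiplication} and \ref{prop:regular-embedding}.
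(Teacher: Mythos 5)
Your overall strategy---checking everything on the canonical bases, using Frobenius reciprocity, and detecting $\supp F$ by restricting to the base group $\Map(Y_N,H)$---is sound, and your verifications of multiplicativity for $\zeta_{\mathcal B}$ and $\zeta_{\mathcal M}$ and of both triangle identities are correct and essentially the same computations as the paper's. Structurally you do more work than the paper: the paper proves only that $\zeta_{\mathcal R}$ is multiplicative and that the triangle commutes, then deduces multiplicativity of $\zeta_{\mathcal M}$ and $\zeta_{\mathcal B}$ from the fact that $\reg$ and $\Phi$ are algebra maps; you instead verify all three directly, which is fine.

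The one genuine error is in your treatment of $\zeta_{\mathcal R}$: the multiplication on $\mathcal R$ is not $\ind_{G_{K,L}}^{G_{K\sqcup L}}$ but $\pind_{K,L}^{K\sqcup L}$, i.e.\ inflation from $G_{K,L}$ to $P_{K,L}^{K\sqcup L}$ (letting $U_{K,L}^{K\sqcup L}=\Map(Y_{K\sqcup L}\setminus Y_{K,L},H)$ act trivially) followed by induction, and these functors differ whenever $Y_{K\sqcup L}\neq Y_K\sqcup Y_L$, which is the generic situation for Young sets other than $\emptyset$ and $\id$. So ordinary Frobenius reciprocity, as you invoke it, does not apply verbatim; the paper instead uses the two-sided adjunction $\pind_{K,L}^{K\sqcup L}\dashv\pres^{K\sqcup L}_{K,L}$ of Lemma \ref{lem:adjoints} together with the (easy) fact that $\pres^{K\sqcup L}_{K,L}(\triv_{G_{K\sqcup L}})=\triv_{G_{K,L}}$, which yields the same final identity $\dim\Hom_{G_{K\sqcup L}}(\triv,\rho_K\rho_L)=\dim\Hom_{G_{K,L}}(\triv,\rho_K\otimes_{\k}\rho_L)$. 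The slip is harmless for the lemma as a whole, because $\Phi$ is an algebra isomorphism and you have correctly shown that $\zeta_{\mathcal M}$ is multiplicative and that $\zeta_{\mathcal R}\Phi=\zeta_{\mathcal M}$, so multiplicativity of $\zeta_{\mathcal R}$ follows anyway; but as written your direct argument for it rests on a misidentification of the product, and this is exactly the point of the paper's proof where Lemma \ref{lem:adjoints} is needed. A smaller quibble: when $\supp F_K=\emptyset$ the group $\Map(Y_K,H)$ is not trivial in general; what is true is that $F_K$ is the constant function with value $\triv_H$, hence fixed by all of $S_K$, so $\Aut F_K=S_K$---your conclusion is right, the stated reason is not.
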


\begin{proof}
We will prove that the diagram commutes and that $\zeta_{\mathcal R}$ is an algebra homomorphism. Since the maps $\mathcal B\to \mathcal M \to \mathcal R$ are algebra homomorphisms, this will imply that $\zeta_{\mathcal M}$ and $\zeta_{\mathcal B}$ are also algebra homomorphisms.

For each $F\in \Map(Y_N,\widehat{H})$ the regular representation of $\Aut F$ decomposes as one copy of the trivial representation plus some nontrivial representations. We thus have 
\[
\zeta_{\mathcal M}\reg(\pi_F) = \zeta_{\mathcal M}(\triv_{\Aut F}) + \sum \zeta_{\mathcal M}
{\small\left(\begin{array}{c}\text{nontrivial}\\ \text{representations} \end{array} \right) }= \begin{cases} 1 & \text{if $\supp F=\emptyset$} \\ 0 & \text{otherwise}\end{cases}
\]
which is equal to $\zeta_{\mathcal B}(\pi_F)$. So the left-hand triangle in the diagram commutes.

Next, given $F\in \Map(Y_N,\widehat{H})$ and $\gamma\in \Irr(\Aut F)$, recall that the isomorphism $\Phi:\mathcal M\to \mathcal R$ of Proposition \ref{prop:Clifford-Mackey-multiplication} sends $\gamma\in \Irr(\Aut F)$ to the representation $\ind_{G_F}^{G_N} (\gamma\ltimes \pi_F)$ of $G_N$. This representation is trivial precisely when $\pi_F$ is the trivial representation of $\Map(Y_N,H)$---i.e., when $\supp F=\emptyset$---and when $\gamma$ is the trivial representation of $\Aut F=S_N$. Thus $\zeta_{\mathcal R} \Phi(\gamma) = \zeta_{\mathcal M}(\gamma)$, and so the diagram in the lemma commutes.

Finally, to show that $\zeta_{\mathcal R}$ is an algebra homomorphism, fix finite sets $K$ and $L$ and irreducible representations $\rho_K\in \Irr(G_K)$ and $\rho_L\in \Irr(G_L)$. The product $\rho_K\rho_L$ of these representations in $\mathcal R$ is the representation $\pind_{K,L}^{K\sqcup L}(\rho_K\otimes_{\k}\rho_L)$ of $G_{K\sqcup L}$. Since $\pind_{K,L}^N$ is adjoint to $\pres_{K,L}^N$ (Lemma \ref{lem:adjoints}), and since $\pres^N_{K,L}(\triv_{G_N}) = \triv_{G_{K,L}}$ (obviously), we have
\[
\begin{aligned}
\zeta_{\mathcal R}(\rho_K\rho_L)  & = \dim \Hom_{G_N} \left(\triv_{G_N}, \pind_{K,L}^N(\rho_K\otimes_{\k}\rho_L)\right) \\
&  = \dim \Hom_{G_{K,L}} \left(\triv_{G_{K,L}}, \rho_L\otimes_{\k} \rho_L \right) .
\end{aligned}
\]
The last intertwining space is one-dimensional if both $\rho_K$ and $\rho_L$ are trivial, and it is zero otherwise. Thus $\zeta_{\mathcal R}(\rho_K\rho_L)=\zeta_{\mathcal R}(\rho_K)\zeta_{\mathcal R}(\rho_L)$ as required.
\end{proof}

Let $\Sym_{\Z}$ denote the Hopf algebra of symmetric functions, in variables $x_1,x_2,\ldots$, with $\Z$ coefficients. It follows from \cite[Theorem 4.3]{ABS} and Lemma \ref{lem:zeta} that there is a commuting diagram of morphisms of Hopf algebras
\begin{equation}\label{eq:Psi-diagram}
\xymatrix@C=60pt@R=30pt{
\mathcal B^{\Delta}  \ar[r]^-{\reg} \ar[dr]_-{\Psi_{\mathcal B}} & \mathcal M^\Delta \ar[d]^-{\Psi_{\mathcal M}} \ar[r]^-{\Phi}_-{\cong} & \mathcal R^\Delta\ar[dl]^-{\Psi_{\mathcal R }}\\
& \Sym_{\Z} &
}
\end{equation}
determined uniquely by the requirement that 
\[
\Psi_{\mathcal R}(\rho)(1,0,0,\ldots)  = \zeta_{\mathcal R}(\rho)\quad \text{for all }\rho\in \mathcal R.
\]
(There is also a corresponding diagram for $\mathcal B^\delta$, $\mathcal M^\delta$, and $\mathcal R^\delta$, but here we shall focus on the $\Delta$ Hopf algebras.) We are going to compute the maps $\Psi$ explicitly in terms of monomial symmetric functions, assuming the auxiliary group $H$ to be abelian. First we shall need some more notation.

Consider the set  $\Comp_N$ of \emph{compositions} of $N$: these are \emph{ordered} lists $\kappa=(K_1,\ldots, K_{\ell})$ of mutually disjoint, nonempty blocks $K_i\subseteq N$ satisfying $\bigcup_i K_i=N$. Each composition $\kappa$ determines a partition  $\overline{\kappa}\in \Part_N$ by forgetting the order of the blocks, and we shall accordingly extend the notation previously established for partitions to compositions: thus $G_\kappa$ means $G_{\ol\kappa}$, and so on. 

As with partitions, the group $S_N$ acts on $\Comp_N$. The isotropy group of $\kappa\in \Comp_N$ is precisely the Young subgroup $S_{\kappa}\subseteq S_N$.  The $S_N$-orbits in $\Comp_N$ are parametrised by the set of \emph{integer compositions} $\Comp_{\# N}$---i.e., the set of ordered lists of positive integers summing to $\#N$---via the map sending a set composition $\kappa=(K_1,\ldots,K_\ell)$ to the integer composition $\#\kappa=(\#K_1,\ldots,\#K_\ell)$. For each integer composition $\alpha \in \Comp_{\#N}$ we let $\Comp_{N,\alpha}$ denote the corresponding orbit:
\[
\Comp_{N,\alpha}\coloneqq \{\kappa\in \Comp_N\ |\ \#\kappa=\alpha\}.
\]
Now for each function $F\in \Map(Y_N,\widehat{H})$, and each integer composition $\alpha \in \Comp_{\#N}$, we define
\[
\Comp_{F, \alpha} \coloneqq \{\kappa \in \Comp_{N,\alpha}\ |\ \supp F\cap Y_{{\kappa}}=\emptyset\}.
\]
The action of $S_N$ on $\Comp_N$ restricts to an action of $\Aut F$ on $\Comp_{F, \alpha}$, and we let 
\[
\rho_{F,\alpha}: \Aut F \to \GL(\k^{\Comp_{F,\alpha}})
\]
be the corresponding permutation representation.

For each integer composition $\alpha\in \Comp_n$ we let $M_\alpha$ denote the associated monomial quasisymmetric function \cite[7.19]{Stanley-EC2}. Although we will ultimately be writing down formulas for symmetric functions, the formulas are more natural when written in terms of the quasisymmetric functions $M_{\alpha}$, and so we shall present our functions in that form.

We now return  to the diagram \eqref{eq:Psi-diagram}. An explicit formula, involving iterated comultiplication, is given in \cite[(4.2)]{ABS} for the Hopf-algebra morphism to $\Sym_{\Z}$ induced by a character. In the case of the Hopf algebra $\mathcal R^\Delta$, where the comultiplication $\Delta$ is given by the simple formula \eqref{eq:Delta-R-definition},  the iterates of the comultiplication are easily computed, and the formula \cite[(4.2)]{ABS} for the map $\Psi_{\mathcal R}:\mathcal R\to \Sym_{\Z}$ takes a correspondingly simple form: for each representation $\rho$ of $G_N$ we have
\begin{equation}\label{eq:Psi-R}
\Psi_{\mathcal R}(\rho) = \sum_{\alpha\in \Comp_{\#N}} \left(\dim \rho^{G_{\kappa_\alpha}}\right) M_{\alpha}
\end{equation}
where $\kappa_\alpha$ is any element of the orbit $\Comp_{N,\alpha}$, and $\dim \rho^{G_{\kappa_{\alpha}}}$ is the $\k$-dimension of the space of $G_{\kappa_\alpha}$-fixed vectors in the representation $\rho$. The formula \eqref{eq:Psi-R} is valid for all auxiliary groups $H$. We shall now use this formula and the diagram \eqref{eq:Psi-diagram}  to compute the maps $\Psi_{\mathcal M}$ and $\Psi_{\mathcal B}$, under the assumption that $H$ is abelian.

\begin{proposition}\label{prop:Psi-M}
Let $Y$ be a Young set, let $H$ be a finite \emph{abelian} group, and consider the Hopf algebras $\mathcal M=\mathcal M^{\Delta}_{Y,H}$ and $\mathcal B=\mathcal B_{Y,H}^{\Delta}$. The Hopf-algebra morphism $\Psi_{\mathcal M}:\mathcal M\to \Sym_{\Z}$ induced by the character $\zeta_{\mathcal M}$ is given, for each finite set $N$, each function $F\in \Map(Y_N,\widehat{H})$, and each representation $\gamma$ of $\Aut F$, by
\[
\Psi_{\mathcal M}(\gamma) =  \sum_{\alpha \in \Comp_{\#N}} \left(\dim\Hom_{\Aut F}(\rho_{F,\alpha},\gamma)\right) M_\alpha.
\]
The Hopf-algebra morphism $\Psi_{\mathcal B}:\mathcal B\to \Sym_{\Z}$ is given, for each $F\in \Map(Y_N,\widehat{H})$, by
\[
\Psi_{\mathcal B}(\pi_F) = \sum_{\alpha\in \Comp_{\#N}} \left( \# \Comp_{F,\alpha} \right) M_\alpha.
\]
\end{proposition}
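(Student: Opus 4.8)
The plan is to derive both formulas from the already-established formula \eqref{eq:Psi-R} for $\Psi_{\mathcal R}$ by transporting it across the commuting diagram \eqref{eq:Psi-diagram}. Since $\Psi_{\mathcal M}=\Psi_{\mathcal R}\circ\Phi$ and, by Proposition \ref{prop:Clifford-Mackey-multiplication}, $\Phi(\gamma)=\ind_{G_F}^{G_N}(\gamma\ltimes\pi_F)$, the asserted formula for $\Psi_{\mathcal M}(\gamma)$ reduces, via \eqref{eq:Psi-R}, to the single identity
\[
\dim\bigl(\ind_{G_F}^{G_N}(\gamma\ltimes\pi_F)\bigr)^{G_\kappa}=\dim\Hom_{\Aut F}(\rho_{F,\alpha},\gamma),
\]
to be proved for each $\alpha\in\Comp_{\#N}$ and each representative $\kappa\in\Comp_{N,\alpha}$. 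The formula for $\Psi_{\mathcal B}$ then follows immediately: since $\Psi_{\mathcal B}=\Psi_{\mathcal M}\circ\reg$ and $\reg(\pi_F)=\reg_{\Aut F}$, we get $\Psi_{\mathcal B}(\pi_F)=\Psi_{\mathcal M}(\reg_{\Aut F})$, and the elementary fact $\dim\Hom_G(\tau,\reg_G)=\dim\tau$ turns each coefficient into $\dim\rho_{F,\alpha}=\#\Comp_{F,\alpha}$.

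To prove the displayed identity I would first note, exactly as in the double-coset computations in the proofs of Proposition \ref{prop:Mackey} and Corollary \ref{cor:M-Hopf} and using that $\Map(Y_N,H)\subseteq G_F$ is normal in $G_N$, that $G_\kappa\backslash G_N/G_F\cong S_\kappa\backslash S_N/\Aut F$, with representatives $g=(w,1)$, $w\in S_\kappa\backslash S_N/\Aut F$. Mackey's restriction formula together with the identity $(\ind_J^{G_\kappa}\tau)^{G_\kappa}\cong\tau^J$ for a subgroup $J\subseteq G_\kappa$ then gives
\[
\bigl(\ind_{G_F}^{G_N}(\gamma\ltimes\pi_F)\bigr)^{G_\kappa}\cong\bigoplus_{w\in S_\kappa\backslash S_N/\Aut F}\bigl({}^{w}(\gamma\ltimes\pi_F)\bigr)^{G_\kappa\cap{}^{w}G_F}.
\]
Computing as in Lemmas \ref{lem:G-properties} and \ref{lem:P-U-properties}, $G_\kappa\cap{}^{w}G_F=(S_\kappa\cap\Aut(wF))\ltimes\Map(Y_\kappa,H)$, while ${}^{w}(\gamma\ltimes\pi_F)$ restricted to $\Map(Y_N,H)$ is the representation $\pi_{wF}$ on the same underlying space (acting trivially on the $V_\gamma$ tensor factor). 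Since $\Map(Y_\kappa,H)$ is normal in the semidirect product, I take invariants in two stages. First, because each $F(y)$ is an irreducible — hence, as $H$ is abelian, one-dimensional — representation of $H$, the space $\bigl({}^{w}(\gamma\ltimes\pi_F)\bigr)^{\Map(Y_\kappa,H)}$ vanishes unless $(wF)(y)=\triv_H$ for all $y\in Y_\kappa$, i.e.\ unless $\supp F\cap Y_{w^{-1}\kappa}=\emptyset$, in which case it is the whole space, whose tensor-factor part $\bigotimes_{y\in Y_N}V_{F(y)}$ is one-dimensional with $S_\kappa\cap\Aut(wF)$ acting on it trivially. Taking the remaining $(S_\kappa\cap\Aut(wF))$-invariants of the $V_\gamma$-factor and conjugating by $w$ — which maps $S_\kappa\cap\Aut(wF)$ isomorphically onto $S_{w^{-1}\kappa}\cap\Aut F$, the stabiliser of the composition $w^{-1}\kappa$ in $\Aut F$ — yields
\[
\dim\bigl({}^{w}(\gamma\ltimes\pi_F)\bigr)^{G_\kappa\cap{}^{w}G_F}=\begin{cases}\dim\gamma^{\,S_{w^{-1}\kappa}\cap\Aut F}&\text{if }\supp F\cap Y_{w^{-1}\kappa}=\emptyset,\\[2pt]0&\text{otherwise.}\end{cases}
\]

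Finally I would reindex the sum. Since the isotropy group of $\kappa$ for the $S_N$-action on $\Comp_{N,\alpha}$ is exactly $S_\kappa$, the map $w\mapsto w^{-1}\kappa$ identifies $S_\kappa\backslash S_N/\Aut F$ with the set $\orbits{\Aut F}{\Comp_{N,\alpha}}$ of $\Aut F$-orbits in $\Comp_{N,\alpha}$, and under this identification the surviving terms are precisely the orbits contained in $\Comp_{F,\alpha}$. Hence
\[
\dim\bigl(\ind_{G_F}^{G_N}(\gamma\ltimes\pi_F)\bigr)^{G_\kappa}=\sum_{\Aut F(\kappa')\in\,\orbits{\Aut F}{\Comp_{F,\alpha}}}\dim\gamma^{\,S_{\kappa'}\cap\Aut F},
\]
and since $\rho_{F,\alpha}=\k^{\Comp_{F,\alpha}}$ decomposes as $\bigoplus_{\Aut F(\kappa')}\ind_{S_{\kappa'}\cap\Aut F}^{\Aut F}\triv$, Frobenius reciprocity identifies the right-hand side with $\dim\Hom_{\Aut F}(\rho_{F,\alpha},\gamma)$, completing the argument. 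I expect the main obstacle to lie in the two-stage invariants computation — specifically, verifying cleanly that (with $H$ abelian) the permutation action on the tensor-factor space contributes nothing, and keeping straight the inverse $w^{-1}\kappa$ (rather than $w\kappa$) and the resulting left/right-coset bookkeeping in the passage between double cosets and orbits.
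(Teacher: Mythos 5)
Your argument is correct and follows essentially the same route as the paper's proof: reduce via $\Psi_{\mathcal M}=\Psi_{\mathcal R}\circ\Phi$ and \eqref{eq:Psi-R} to computing $\dim\bigl(\ind_{G_F}^{G_N}(\gamma\ltimes\pi_F)\bigr)^{G_{\kappa_\alpha}}$, apply Mackey, identify the double cosets with $\Aut F$-orbits in $\Comp_{N,\alpha}$, use that $H$ is abelian so the base-group character kills all terms except those with $\supp F\cap Y_{\kappa'}=\emptyset$ and contributes trivially otherwise, and finish with Frobenius reciprocity; the $\Psi_{\mathcal B}$ step via $\dim\Hom_{\Aut F}(\rho_{F,\alpha},\reg_{\Aut F})=\#\Comp_{F,\alpha}$ is also the paper's. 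The only difference is cosmetic bookkeeping (you take $G_\kappa\backslash G_N/G_F$ and the map $w\mapsto w^{-1}\kappa$, the paper takes $G_F\backslash G_N/G_{\kappa_\alpha}$ and $w\mapsto w\kappa_\alpha$), and your handling of the two worries you flag is already correct.
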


\begin{proof}
Fix $N$, $F$, and $\gamma$,  let $\alpha\in \Comp_{\#N}$ be an integer composition, and let $\kappa_\alpha$ be any element of $\Comp_{N,\alpha}$.  Since $\Psi_{\mathcal M} = \Psi_{\mathcal R}\circ \Phi$ the formula \eqref{eq:Psi-R} shows that the coefficient of $M_\alpha$ in $\Psi_{\mathcal M}(\gamma)$ is the dimension of $ (\ind_{G_F}^{G_N}(\gamma\ltimes \pi_F))^{G_{\kappa_\alpha}}$.
The Mackey formula for $\ind$ and $\res$ implies that this dimension is equal to 
\begin{equation}\label{eq:Phi-M-proof-1}
\sum_{G_FwG_{\kappa_\alpha}\in G_F\backslash G_N/ G_{\kappa_\alpha}} \dim\Hom_{ {}^wG_{{\kappa_\alpha}}\cap G_F} (\triv, \gamma\ltimes \pi_F).
\end{equation}
Considering the double-coset space indexing this last sum, we find
\[
\begin{aligned}
G_F\backslash G_N/G_{\kappa_\alpha} & = (\Aut F\ltimes \Map(Y_N,H))\backslash (S_N\ltimes \Map(Y_N,H))/ (S_{\kappa_\alpha}\ltimes \Map({Y_{\kappa_\alpha}},H))  \\ & \cong \Aut F\backslash S_N / S_{\kappa_\alpha}.
\end{aligned}
\]
Now recall that $S_{\kappa_\alpha}$ is the isotropy group and $\Comp_{N,\alpha}$ is the orbit of $\kappa_\alpha$ for the action of $S_N$ on $\Comp_N$; thus the map $w\mapsto w\kappa_\alpha$ induces a bijection
\[
\Aut F\backslash S_N/S_{\kappa_\alpha} \xrightarrow{\cong} \Aut F\backslash \Comp_{N,\alpha}.
\]
For each $w\in S_N$, setting $\kappa=w\kappa_\alpha$, we have 
\[
{}^wG_{\kappa_\alpha}\cap G_F = (S_{\kappa} \cap \Aut F) \ltimes \Map({Y_{\kappa}},H) = (\Aut F)_{\kappa}\ltimes \Map({Y_{\kappa}},H)
\]
where $(\Aut F)_{\kappa}$ indicates the isotropy group of $\kappa$ in $\Aut F$.  Making these identifications, \eqref{eq:Phi-M-proof-1} becomes
\begin{equation}\label{eq:Phi-M-proof-2}
\sum_{\Aut F(\kappa)\in \Aut F\setminus \Comp_{N,\alpha}} \dim\Hom_{ (\Aut F)_{\kappa}\ltimes \Map({Y_{\kappa}},H)}(\triv,\gamma\ltimes \pi_F).
\end{equation}

For each $\kappa\in \Comp_{N,\alpha}$ the abelian group $\Map(Y_{\kappa},H)$ acts on the representation $\gamma\ltimes \pi_F$ by the character $\pi_{F\restrict_{Y_\kappa}}$. So the representation $\gamma\ltimes \pi_F$ is either trivial on $\Map(Y_{\kappa},H)$, or else it contains no nonzero $\Map(Y_{\kappa},H)$-fixed vectors. The former possibility occurs precisely when $\supp F \cap Y_{\kappa} =\emptyset$; recall that this is, by definition, the condition that $\kappa$ belong to $\Comp_{F,\alpha}$. Now the character $\pi_F$ is trivial on $\Aut F$, and so \eqref{eq:Phi-M-proof-2} is equal to 
\begin{equation*}\label{eq:Phi-M-proof-3}
\begin{aligned}
& \sum_{\Aut F(\kappa)\in \orbits{\Aut F}{\Comp_{F,\alpha}}} \dim\Hom_{(\Aut F)_{\kappa}}(\triv,\gamma) \\
& = \sum_{\Aut F(\kappa)\in \orbits{\Aut F}{ \Comp_{F,\alpha}}} \dim \Hom_{\Aut F}\left(\ind_{(\Aut F)_{\kappa}}^{\Aut F}\triv_{(\Aut F)_{\kappa}},\gamma\right)
\end{aligned}
\end{equation*}
where the equality is Frobenius reciprocity. The $\Aut F$-representation $\ind_{(\Aut F)_{\kappa}}^{\Aut F}\triv_{(\Aut F)_{\kappa}}$ is the permutation representation associated to the orbit $\Aut F(\kappa)\subseteq \Comp_{F,\alpha}$, and so summing over these orbits gives the permutation representation $\rho_{F,\alpha}$ as claimed.

Turning to $\Psi_{\mathcal B}$, using the formula for $\Psi_{\mathcal M}$  just established and the equality $\Psi_{\mathcal B}=\Psi_{\mathcal M}\circ \reg$, we find that for $F\in \Map(Y_N,\widehat{H})$ and $\alpha\in \Comp_N$ the coefficient of $M_\alpha$ in $\Psi_{\mathcal B}(\pi_F)$ is
\[
\dim\Hom_{\Aut F}(\rho_{F,\alpha}, \reg_F) = \dim \rho_{F,\lambda} = \#\Comp_{F,\alpha}.\qedhere
\]
\end{proof}

\section{Graph automorphisms and colourings}\label{sec:graphs}

For certain choices of Young set $Y$ and auxiliary group $H$ the Hopf algebras $\mathcal M^{\delta,\Delta}_{Y,H}$ and $\mathcal B^{\delta,\Delta}_{Y,H}$, and the associated symmetric functions, admit descriptions in terms of isomorphism classes, automorphism groups, and colourings of familiar combinatorial objects. In this section we shall examine one such example.

\subsection{The Hopf algebra of graphs and chromatic symmetric functions} 

Our graphs are finite, simple, and undirected: so a graph $\Gamma$ is a finite   set $V(\Gamma)$ of vertices, and a finite set $E(\Gamma)\subseteq \{ \text{$2$-element subsets of $V(\Gamma)$}\}$ of edges. An isomorphism of graphs $\Gamma\to \Lambda$ is a bijection of vertex-sets $V(\Gamma)\to V(\Lambda)$ whose induced map on the power sets $\Power{V(\Gamma)}\to \Power{V(\Lambda)}$ restricts to a bijection $E(\Gamma)\to E(\Lambda)$. 
The disjoint union  of graphs  is defined by taking disjoint unions of vertex- and edge-sets. For each graph $\Gamma$ and each subset $U\subseteq V(\Gamma)$ the induced graph $\Gamma\restrict_U$ is defined by $V(\Gamma\restrict_U)=U$ and $E(\Gamma\restrict_U)=E(\Gamma)\cap \Power{U}$. 

Given a graph $\Gamma$ and an integer composition $\alpha=(\alpha_1,\ldots,\alpha_\ell)\in \Comp_{\#V(\Gamma)}$,   a \emph{(proper) $\alpha$-colouring} of $\Gamma$ is a function $\kappa:V(\Gamma) \to \{1,\ldots,\ell\}$ satisfying $\#\kappa^{-1}(i)=\alpha_i$ for all $i$, and $\kappa(v)\neq\kappa(w)$ for all $\{v,w\}\in E(\Gamma)$. The set of all such colourings is denoted by $\Colour_{\Gamma,\alpha}$. 

The \emph{Hopf algebra of graphs} (\cite[Section 12]{Schmitt-IHA}) is
\[
\Chromatic = \bigoplus_{[\Gamma]} \Z[\Gamma]
\]
where the sum is over the set of isomorphism classes $[\Gamma]$ of finite graphs. We grade $\Chromatic$ so that $[\Gamma]$ sits in degree $\#V(\Gamma)$. The multiplication in $\Chromatic$ is $[\Gamma]\otimes_{\Z} [\Lambda] \mapsto [\Gamma\sqcup\Lambda]$, and the comultiplication is
\[
\Delta_{\Chromatic}[\Gamma] = \sum_{U\in \Power{V(\Gamma)}} [\Gamma\restrict_U]\otimes_{\Z} [\Gamma\restrict_{U^c}]
\]
where $U^c=V(\Gamma)\setminus U$. The unit of $\Chromatic$ is the empty graph, and the counit is the map $\Chromatic\to \Z$ sending the empty graph to $1$ and all other graphs to zero. These operations make $\Chromatic$ a connected, commutative and cocommutative Hopf algebra.

The algebra $\Chromatic$ has a canonical character $\zeta_{\Chromatic}:\Chromatic\to \Z$, given by
\[
\zeta_{\Chromatic}[\Gamma]=\begin{cases} 1 & \text{if }E(\Gamma)=\emptyset \\ 0 & \text{otherwise.}\end{cases}
\]
The associated Hopf morphism $\Psi_{\Chromatic}:\Chromatic\to \Sym_{\Z}$ sends $[\Gamma]$ to the \emph{chromatic symmetric function}
\[
X_\Gamma \coloneqq \sum_{\alpha\in \Comp_{\#V(\Gamma)}} (\#\Colour_{\Gamma,\alpha}) M_\alpha.
\]
This symmetric function was first defined by Stanley in \cite{Stanley-chromatic}. The connection to the Hopf algebra $\Chromatic$ and the character $\zeta_{\Chromatic}$ was pointed out in \cite[Example 4.5]{ABS}.

\subsection{A Hopf algebra and symmetric functions associated to representations of graph automorphisms}

We are going to study an enlargement of $\Chromatic$. For each graph $\Gamma$ we let $\Aut\Gamma$ denote the group of graph-automorphisms of $\Gamma$. For each isomorphism of graphs $w:\Gamma\to \Lambda$ we have an isomorphism of Grothendieck groups
\begin{equation}\label{eq:graph-isomorphism}
\Ad_w :R(\Aut\Gamma) \xrightarrow{\ \gamma\mapsto \gamma(w^{-1}\cdot w)\ } R(\Aut\Lambda).
\end{equation} 

Consider the free abelian group
\[
\GraphAlg \coloneqq \Big(\bigoplus_{\Gamma} R(\Aut\Gamma)\Big)_{\mathrm{Graph}^\times} 
\]
where the sum is over finite simple graphs $\Gamma$, and the subscript indicates that we impose the relation $\gamma=\Ad_w\gamma$ for all $\gamma$ and $w$ as in \eqref{eq:graph-isomorphism} (that is, we take the coinvariants of the groupoid of graph isomorphisms). We grade $\GraphAlg$ so that $R(\Aut\Gamma)$ sits in degree $\#V(\Gamma)$.

Given graphs $\Gamma$ and $\Lambda$ there is an obvious inclusion of groups
\[
\Aut\Gamma \times \Aut \Lambda \into \Aut(\Gamma \sqcup \Lambda)
\]
whence an induction functor
\begin{equation*}\label{eq:induction-graph-automorphisms}
\ind_{\Aut\Gamma\times \Aut\Lambda}^{\Aut(\Gamma\sqcup\Lambda)}:\Rep(\Aut\Gamma \times \Aut\Lambda) \to \Rep(\Aut(\Gamma \sqcup \Lambda)).
\end{equation*}
On the free abelian group $\GraphAlg$ we define a graded multiplication $\GraphAlg\otimes_{\Z} \GraphAlg\to \GraphAlg$ as the direct sum of the maps
\begin{equation}\label{eq:graph-algebra-product}
R(\Aut\Gamma)\otimes_{\Z} R(\Aut\Lambda)\xrightarrow{\cong} R(\Aut\Gamma \times \Aut\Lambda) \xrightarrow{\ind} R(\Aut(\Gamma \sqcup\Lambda)).
\end{equation}
This product makes $\GraphAlg$ into an associative graded algebra; the unit is the trivial representation of the automorphism group of the empty graph.

Next, for each graph $\Gamma$ and each subset $U\subseteq V(\Gamma)$ we define
\[
(\Aut\Gamma)_U\coloneqq \{w\in \Aut\Gamma\ |\ w(U)=U\}
\]
to be the isotropy group of $U$ for the action of $\Aut\Gamma$ on the power set $\Power{V(\Gamma)}$. This is by definition a subgroup of $\Aut\Gamma$; it is also in a natural way a subgroup of the product $\Aut(\Gamma\restrict_U)\times \Aut(\Gamma\restrict_{U^c})$, via the map $w\mapsto (w\restrict_U,w\restrict_{U^c})$.

Let $\Delta_{\GraphAlg}:\GraphAlg\to \GraphAlg\otimes_{\Z}\GraphAlg$ be the graded $\Z$-linear map defined, for each graph $\Gamma$ and each representation $\gamma$ of $\Aut\Gamma$, by
\begin{equation}\label{eq:Delta-Graph-definition}
\Delta_{\GraphAlg}\gamma = \sum_{\Aut\Gamma(U)\in \orbits{\Aut\Gamma}{ \Power{V(\Gamma)}} }\ind_{(\Aut\Gamma)_U}^{\Aut(\Gamma\restrict_U)\times \Aut(\Gamma\restrict_{U^c})} \res^{\Aut\Gamma}_{(\Aut\Gamma)_U} \gamma.
\end{equation}
Here the sum is over the $\Aut\Gamma$-orbits of subsets of $V(\Gamma)$, and we are using the canonical isomorphisms $R(G)\otimes_{\Z}R(G')\xrightarrow{\cong} R(G\times G')$ to view each summand as an element of $R(\Aut(\Gamma\restrict_U))\otimes_{\Z} R(\Aut(\Gamma\restrict_{U^c}))\subset\GraphAlg\otimes_{\Z}\GraphAlg$.

We also let $\epsilon_{\GraphAlg}:\GraphAlg\to \Z$ be the map sending the trivial representation of $\Aut\emptyset$ to $1$, and all other irreducible representations to $0$. 

Let $\zeta_{\GraphAlg}:\GraphAlg\to \Z$ be the $\Z$-linear map defined, for each graph $\Gamma$ and each representation $\gamma$ of $\Aut\Gamma$, by
\[
\zeta_{\GraphAlg}(\gamma) = \begin{cases} 1 & \text{if $E(\Gamma)=\emptyset$ and $\gamma=\triv_{\Aut\Gamma}$}\\ 0 & \text{otherwise.}\end{cases}
\]

Finally, for each graph $\Gamma$ and each integer composition $\alpha\in \Comp_{\#V(\Gamma)}$ recall that $\Colour_{\Gamma,\alpha}$ is the set of proper $\alpha$-colourings of $\Gamma$. The group $\Aut\Gamma$ acts on this set by $w\kappa(v)\coloneqq \kappa(w^{-1}v)$, and we let 
\[
\rho_{\Gamma,\alpha}:\Aut \Gamma\to \GL(\k^{\Colour_{\Gamma,\alpha}})
\]
be the corresponding permutation representation.

\begin{theorem}\label{thm:GraphAlg}
\begin{enumerate}[\rm(1)]
\item The comultiplication $\Delta_{\GraphAlg}$ and counit $\epsilon_{\GraphAlg}$ make $\GraphAlg$ into a connected, commutative, and cocommutative graded Hopf algebra.
\item The map $\reg:\Chromatic\to \GraphAlg$ sending $[\Gamma]$ to the regular representation of $\Aut\Gamma$  is an embedding of Hopf algebras.
\item The map $\zeta_{\GraphAlg}:\GraphAlg \to \Z$ is an algebra homomorphism, and the induced Hopf-algebra homomorphism $\GraphAlg\to \Sym_{\Z}$ sends each representation $\gamma$ of $\Aut\Gamma$ to the symmetric function
\[
X_{\Gamma,\gamma} \coloneqq \sum_{\alpha\in \Comp_{\#V(\Gamma)}} \left( \dim\Hom_{\Aut\Gamma}(\rho_{\Gamma,\alpha},\gamma)\right) M_\alpha.
\]
\item For each graph $\Gamma$ we have $X_{\Gamma,\reg_{\Aut\Gamma}}=X_{\Gamma}$, Stanley's chromatic symmetric function; and 
\[
X_\Gamma = \sum_{\gamma\in \Irr(\Aut\Gamma)} (\dim \gamma)X_{\Gamma,\gamma}
\]
where $\dim\gamma$ is the dimension of the $\k$-vector space underlying the representation $\gamma$.
\end{enumerate}
\end{theorem}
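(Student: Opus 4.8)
The plan is to recognise Theorem~\ref{thm:GraphAlg} as the special case of the general construction obtained by taking $Y$ to be the Young set with $Y_N$ equal to the set of $2$-element subsets of $N$ (this is a Young set by Examples~\ref{examples:Young-set}(5)) and $H=S_2$, and then to deduce each of the four parts from a result already established above. Almost all of the work goes into setting up the translation. Since $\widehat{S_2}$ consists only of the trivial and the sign character, a function $F\in\Map(Y_N,\widehat{S_2})$ amounts to the single datum $\supp F\subseteq Y_N$, i.e.\ to a finite simple graph $\Gamma_F$ with vertex set $N$ and edge set $E(\Gamma_F)=\supp F$. First I would record the following facts under this correspondence: the $S_N$-action on $\Map(Y_N,\widehat{S_2})$ is the action of $S_N$ on graphs with vertex set $N$, so $\Aut F=\Aut\Gamma_F$, and the coinvariant relation $\gamma=\Ad_w\gamma$ for bijections $w$ becomes the relation~\eqref{eq:graph-isomorphism}; consequently $\mathcal M_{Y,S_2}$ and $\GraphAlg$ coincide as graded abelian groups with distinguished basis (isomorphism classes of graphs $\Gamma$ together with a choice of $\gamma\in\Irr(\Aut\Gamma)$). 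Moreover the operation $(F_K,F_L)\mapsto F_K\sqcup F_L$ of~\eqref{eq:F-sqcup} corresponds to disjoint union of graphs; for $K\subseteq N$ the graph $\Gamma_{F\restrict_{Y_K}}$ is the induced subgraph $\Gamma_F\restrict_K$, because $\supp(F\restrict_{Y_K})=\supp F\cap Y_K=E(\Gamma_F)\cap\Power{K}$; and $(\Aut F)_K=(\Aut\Gamma_F)_K$.

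With this dictionary in place, parts~(1) and~(2) are immediate. Since $H=S_2$ is abelian, $\Delta_{\mathcal M}$ is given by the simplified formula~\eqref{eq:Delta-M-abelian}, which translates verbatim into~\eqref{eq:Delta-Graph-definition}; likewise~\eqref{eq:M-mult-definition} becomes~\eqref{eq:graph-algebra-product} and $\epsilon_{\mathcal M}$ becomes $\epsilon_{\GraphAlg}$. Hence Corollary~\ref{cor:M-Hopf} yields part~(1). For part~(2) I would use Proposition~\ref{prop:HACS} (or simply compare~\eqref{eq:Delta-B-abelian} with $\Delta_{\Chromatic}$) to identify the basic subalgebra $\mathcal B^{\Delta}_{Y,S_2}$ with the Hopf algebra of graphs $\Chromatic$; under the identifications $\mathcal B^{\Delta}_{Y,S_2}\cong\Chromatic$ and $\mathcal M^{\Delta}_{Y,S_2}\cong\GraphAlg$, the embedding $\reg$ of Proposition~\ref{prop:regular-embedding} becomes $[\Gamma]\mapsto\reg_{\Aut\Gamma}$, which that proposition already asserts to be an embedding of Hopf algebras.

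For part~(3) the character $\zeta_{\mathcal M}$ translates into $\zeta_{\GraphAlg}$ (the condition $\supp F=\emptyset$ becomes $E(\Gamma)=\emptyset$), so Lemma~\ref{lem:zeta} shows that $\zeta_{\GraphAlg}$ is an algebra homomorphism and, via the isomorphism $\Phi$ of Proposition~\ref{prop:Clifford-Mackey-multiplication}, that the induced Hopf morphism $\GraphAlg\to\Sym_{\Z}$ is $\Psi_{\mathcal M}$. Proposition~\ref{prop:Psi-M} computes $\Psi_{\mathcal M}(\gamma)=\sum_\alpha\bigl(\dim\Hom_{\Aut F}(\rho_{F,\alpha},\gamma)\bigr)M_\alpha$, so it remains to identify $\rho_{F,\alpha}$ with $\rho_{\Gamma,\alpha}$. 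To do this I would exhibit an $\Aut\Gamma$-equivariant bijection $\Comp_{F,\alpha}\xrightarrow{\cong}\Colour_{\Gamma,\alpha}$: a set composition $\kappa=(K_1,\dots,K_\ell)$ with $\#\kappa=\alpha$ is the same thing as the function $V(\Gamma)\to\{1,\dots,\ell\}$ whose fibres are the blocks $K_i$, and the condition $\supp F\cap Y_\kappa=\emptyset$ defining $\Comp_{F,\alpha}$ says exactly that no edge of $\Gamma$ lies inside a single block, i.e.\ that the colouring is proper. This gives $\Psi_{\mathcal M}(\gamma)=X_{\Gamma,\gamma}$, as claimed.

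Finally, for part~(4) I would put $\gamma=\reg_{\Aut\Gamma}$ in the formula of part~(3): the coefficient of $M_\alpha$ is $\dim\Hom_{\Aut\Gamma}(\rho_{\Gamma,\alpha},\reg_{\Aut\Gamma})$, which equals $\dim\rho_{\Gamma,\alpha}=\#\Colour_{\Gamma,\alpha}$ since $\dim\Hom_G(V,\reg_G)=\dim V$ for every finite group $G$ and representation $V$; hence $X_{\Gamma,\reg_{\Aut\Gamma}}=X_\Gamma$ (one could instead invoke the commuting triangle~\eqref{eq:Psi-diagram} together with $\Psi_{\Chromatic}([\Gamma])=X_\Gamma$). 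The displayed identity then follows by decomposing $\reg_{\Aut\Gamma}=\sum_{\gamma\in\Irr(\Aut\Gamma)}(\dim\gamma)\,\gamma$ in $R(\Aut\Gamma)$ and applying the additive map $\gamma\mapsto X_{\Gamma,\gamma}$. The only real obstacle here is the careful bookkeeping needed to verify the dictionary of the first paragraph; once that is done, every assertion is a direct citation of a result proved above.
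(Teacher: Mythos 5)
Your proposal is correct and follows essentially the same route as the paper: identify $\GraphAlg$ and $\Chromatic$ with $\mathcal M^{\Delta}_{E,S_2}$ and $\mathcal B^{\Delta}_{E,S_2}$ via $F\mapsto\Gamma_F$, and then cite Corollary \ref{cor:M-Hopf}, Proposition \ref{prop:regular-embedding}, Lemma \ref{lem:zeta}, and Proposition \ref{prop:Psi-M}, using the equivariant bijection $\Comp_{F,\alpha}\cong\Colour_{\Gamma_F,\alpha}$ exactly as the paper does. The only cosmetic difference is in part (4), where you compute $\dim\Hom_{\Aut\Gamma}(\rho_{\Gamma,\alpha},\reg_{\Aut\Gamma})=\#\Colour_{\Gamma,\alpha}$ directly rather than appealing (as the paper does, and as you note parenthetically) to $\zeta_{\Chromatic}=\zeta_{\GraphAlg}\circ\reg$ and the uniqueness in \cite[Theorem 4.3]{ABS}; both are valid.
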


\begin{proof}
Let  $E$ be the Young set with $E_N=\{\text{two-element subsets of $N$}\}$  (cf.~Examples \ref{examples:Young-set} and \ref{example:non-isomorphic}). We are going to prove the theorem by identifying  $\GraphAlg$ and $\Chromatic$ with the Hopf algebras $\mathcal M_{E,S_2}^{\Delta}$ and $\mathcal B_{E,S_2}^{\Delta}$ of Section \ref{sec:Hopf}.

We have $\widehat{S_2}=\{\triv,\sign\}$, so a function $F\in \Map(E_N,\widehat{S_2})$ is completely determined by its support, $\supp F = F^{-1}(\sign)$. The map sending a function $F\in \Map(E_N, \widehat{S_2})$ to the graph $\Gamma_F$ with $V(\Gamma_F)=N$ and $E(\Gamma_F)=\supp F$ is a natural isomorphism between $\Map(E_N, \widehat{S_2})$ and the set of graphs with vertex-set $N$, where `natural' means with respect to set bijections $N\to M$. In particular we have the equality $\Aut F=\Aut \Gamma_F$ of subgroups of $S_N$.

These identifications yield grading-preserving bijections between the canonical bases of $\mathcal M_{E,S_2}$ and of $\GraphAlg$, and between the canonical bases of $\mathcal B_{E,S_2}$ and of $\Chromatic$. We thus have isomorphisms of graded abelian groups
\begin{equation}\label{eq:M-G-iso}
\mathcal M_{E,S_2} \xrightarrow[\cong]{\ R(\Aut F)\ni \gamma\mapsto \gamma\in R(\Aut \Gamma_F)\ } \GraphAlg\quad \text{and}\quad \mathcal B_{E,S_2} \xrightarrow[\cong]{\pi_F\mapsto [\Gamma_F]} \Chromatic
\end{equation}
making the diagram
\[
\xymatrix{
\mathcal B_{E,S_2} \ar[r]^-{\reg} \ar[d]_-{\cong} & \mathcal M_{E,S_2} \ar[d]^-{\cong} \\
\Chromatic \ar[r]^-{\reg} & \GraphAlg
}
\]
commute. It is now an easy matter to match up the definitions of the Hopf-algebra structures and conclude that the isomorphisms \eqref{eq:M-G-iso} intertwine the units, the counits, the multiplications, the comultiplications, and the characters $\zeta$ on either side. 

To prove the formula for $X_{\Gamma,\gamma}$ in part (3) it suffices to note that for each finite set $N$, each function $F\in \Map(E_N, \widehat{S_2})$, and each integer composition $\alpha\in \Comp_{\#N}$, the map $\Comp_{F,\alpha} \to \Colour_{\Gamma_F,\alpha}$ sending the composition $(K_1,\ldots, K_\ell)$ to the proper $\alpha$-colouring $K_i\to \{i\}$ is a bijection that is equivariant for the action of $\Aut F=\Aut\Gamma_F$. Thus the given formula for $X_{\Gamma,\gamma}$ is an instance of Proposition \ref{prop:Psi-M}.

Finally, for part (4), recall that the chromatic symmetric function $X_\Gamma$ is the image of $[\Gamma]$ under the Hopf-algebra morphism $\Chromatic\to \Sym_{\Z}$ induced by the character $\zeta_\Chromatic$, while $X_{\Gamma,\reg_{\Aut\Gamma}}$ is the image of $[\Gamma]$ under the morphism induced by the character $\zeta_{\GraphAlg}\circ \reg$. Since $\zeta_{\Chromatic}= \zeta_{\GraphAlg}\circ \reg$ these two symmetric functions coincide. Now the map $\gamma\mapsto X_{\Gamma,\gamma}$ is additive, and so the asserted decomposition of $X_\Gamma$ follows from the decomposition of the regular representation of ${\Aut\Gamma}$ into irreducibles. 
\end{proof}

\begin{remark}\label{rem:graph-PSH}
We are concentrating in this section on the $\Delta$ Hopf algebras, but much of the above applies equally well to the $\delta$ Hopf algebras. The coproducts $\delta_{\Chromatic}$ and $\delta_{\GraphAlg}$ are defined by restricting the sums in the definitions of $\Delta_{\Chromatic}$ and $\Delta_{\GraphAlg}$ to subsets $U\subseteq V(\Gamma)$ that are unions of connected components of $\Gamma$. The resulting PSH algebra $\GraphAlg^\delta$ has for its set of primitive irreducible elements the union of the sets $\Irr (\Aut\Gamma)$ over the set of isomorphism classes of \emph{connected} graphs $\Gamma$. Note that every finite group arises as the automorphism group of a connected graph (\cite{Frucht} again), so the set of primitive irreducibles is still extremely complicated. Note too that the only primitive irreducible element of the Hopf algebra $\GraphAlg^\Delta$ is the trivial representation of the automorphism group of the graph with one vertex.
\end{remark}

We conclude this paper some preliminary investigations of the symmetric functions $X_{\Gamma,\gamma}$; these functions will be treated in greater detail elsewhere. We first record the formal definition of the polynomials $\chi_{\Gamma,\gamma}$ discussed in the introduction:

\begin{definition}
For each graph $\Gamma$ and each finite-dimensional representation $\gamma$ of $\Aut\Gamma$, we denote by $\chi_{\Gamma,\gamma}\in \Z[x]$ the polynomial obtained by specialisation from the symmetric function $X_{\Gamma,\gamma}$:
\[
\chi_{\Gamma,\gamma}(m) = X_{\Gamma,\gamma}(1^m) = \dim\Hom_{\Aut\Gamma}( \k^{\Colour_{\Gamma,m}},\gamma)
\]
where we recall that $\Colour_{\Gamma,m}$ is the set of proper colourings $\kappa:V(\Gamma)\to [m]$.
\end{definition}

\begin{example}
An application of Frobenius reciprocity yields the following alternative formula for the symmetric function $X_{\Gamma,\gamma}$:
\[
X_{\Gamma,\gamma} = \sum_{\substack{\alpha \in \Comp_{\#V(\Gamma)} \\ \Aut\Gamma(\kappa)\in \orbits{\Aut\Gamma}{\Colour_{\Gamma,\alpha}}}} \dim( \gamma^{(\Aut\Gamma)_{\kappa}}) M_\alpha,
\]
where $(\Aut\Gamma)_\kappa$ is the isotropy group and $\Aut\Gamma(\kappa)$ is the orbit of the colouring $\kappa$ for the action of $\Aut\Gamma$ on $\Colour_{\Gamma,\alpha}$; and $\gamma^{(\Aut\Gamma)_\kappa}$ is the space of $(\Aut\Gamma)_\kappa$-fixed vectors in the representation $\gamma$. Putting $\gamma=\triv$, the trivial representation of $\Aut \Gamma$, we obtain
\begin{equation*}\label{eq:XGammatriv}
X_{\Gamma,\triv} = \sum_{\alpha\in \Comp_{\#V(\Gamma)}} \# (\orbits{\Aut \Gamma}{\Colour_{\Gamma,\alpha}}) M_\alpha.
\end{equation*}
Specialising gives 
\[
\chi_{\Gamma,\triv}(m) = X_{\Gamma,\triv}(1^m) = \# (\orbits{\Aut\Gamma}{\Colour_{\Gamma,m}}),
\]
the orbital chromatic polynomial of $\Gamma$ \cite{Hanlon}, \cite{Cameron-Jackson-Rudd},  \cite{Cameron-Kayibi}.
\end{example}

\begin{example}
For each $n\geq 0$ let $K_n$ be the complete graph on $n$ vertices: that is, the graph in which each pair of vertices is joined by an edge. The only composition $\alpha$ of $n$ with $\Colour_{\Gamma,\alpha}\neq\emptyset$ is the composition $(1,\ldots,1)$, and so we have 
\[
X_{K_n} = n! M_{(1,\ldots,1)}.
\]
The group $\Aut K_n=S_n$ acts simply transitively on the set $\Colour_{K_n,(1,\ldots,1)}$, and so the permutation representation $\rho_{K_n,(1,\ldots,1)}$ is isomorphic to the regular representation. We thus have for each $\gamma\in \Irr(S_n)$ the equality
\[
X_{K_n, \gamma} = (\dim\gamma) M_{(1,\ldots,1)}.
\]
The decomposition of $X_{K_n}$ given in part (4) of Theorem \ref{thm:GraphAlg} thus boils down to the standard fact that the order of the group $\Aut K_n$ equals the sum of squares of the degrees of its irreducible representations.
\end{example}

This last example is typical of graphs without symmetric colourings, as the following proposition shows.

\begin{proposition}\label{prop:free-action}
For each graph $\Gamma$, the following are equivalent.
\begin{enumerate}[\rm(a)]
\item The action of $\Aut\Gamma$ on the set $\Colour_{\Gamma}$ of all proper colourings of $\Gamma$ is free. (That is, no colouring is fixed by any nontrivial automorphism.)
\item $X_{\Gamma}=(\#\Aut\Gamma) X_{\Gamma,\triv}$.
\item $\chi_\Gamma = (\#\Aut\Gamma) \chi_{\Gamma,\triv}$.
\item $X_{\Gamma,\gamma}=(\dim\gamma)X_{\Gamma,\triv}$ for every $\gamma\in \Irr(\Aut\Gamma)$.
\item $\chi_{\Gamma,\gamma} = (\dim\gamma)\chi_{\Gamma,\triv}$ for every $\gamma\in \Irr(\Aut\Gamma)$.
\end{enumerate}
\end{proposition}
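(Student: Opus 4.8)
The plan is to prove all five equivalences by a web of implications whose representation-theoretic core is the following elementary fact about permutation modules: if a finite group $G$ acts on a finite set $X$, then $\#X \le (\#G)\cdot\#(\orbits{G}{X})$, with equality if and only if the action is free; moreover, when the action is free one has $\k^X\cong \reg_G^{\oplus \#(\orbits{G}{X})}$, and hence $\dim\Hom_G(\k^X,\gamma)=(\dim\gamma)\cdot\#(\orbits{G}{X})$ for every $\gamma\in\Irr(G)$. I will also use the standard identity $\sum_{\gamma\in\Irr(G)}(\dim\gamma)^2=\#G$.

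First I would settle the bookkeeping behind (a). An automorphism $w\in\Aut\Gamma$ fixes a proper colouring $\kappa$ exactly when $w$ preserves every colour class of $\kappa$, so the stabiliser of $\kappa$ depends only on the (unordered) partition of $V(\Gamma)$ into colour classes. Since every proper colouring of $\Gamma$ uses at most $\#V(\Gamma)$ colours, it follows that $\Aut\Gamma$ acts freely on $\Colour_\Gamma$ if and only if it acts freely on $\Colour_{\Gamma,\alpha}$ for every integer composition $\alpha\in\Comp_{\#V(\Gamma)}$. Now (a)$\Leftrightarrow$(b): comparing the coefficients of $M_\alpha$ in the expansions $X_\Gamma=\sum_\alpha(\#\Colour_{\Gamma,\alpha})M_\alpha$ and $X_{\Gamma,\triv}=\sum_\alpha\#(\orbits{\Aut\Gamma}{\Colour_{\Gamma,\alpha}})M_\alpha$ (legitimate, since the $M_\alpha$ are a basis of the quasisymmetric functions), condition (b) is equivalent to $\#\Colour_{\Gamma,\alpha}=(\#\Aut\Gamma)\cdot\#(\orbits{\Aut\Gamma}{\Colour_{\Gamma,\alpha}})$ for all such $\alpha$, which by the equality case above says precisely that $\Aut\Gamma$ acts freely on each $\Colour_{\Gamma,\alpha}$, i.e.\ (a).

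Next I would connect (d) and (e) to this. For (a)$\Rightarrow$(d): freeness makes each $\rho_{\Gamma,\alpha}$ a direct sum of copies of $\reg_{\Aut\Gamma}$, whence $\dim\Hom_{\Aut\Gamma}(\rho_{\Gamma,\alpha},\gamma)=(\dim\gamma)\cdot\#(\orbits{\Aut\Gamma}{\Colour_{\Gamma,\alpha}})$ for all $\gamma$, and reading off the $M_\alpha$-coefficients via Theorem~\ref{thm:GraphAlg}(3) this is exactly $X_{\Gamma,\gamma}=(\dim\gamma)X_{\Gamma,\triv}$, i.e.\ (d). For (d)$\Rightarrow$(b): sum $X_{\Gamma,\gamma}=(\dim\gamma)X_{\Gamma,\triv}$ over $\gamma\in\Irr(\Aut\Gamma)$ with weights $\dim\gamma$ and use Theorem~\ref{thm:GraphAlg}(4) together with $\sum_\gamma(\dim\gamma)^2=\#\Aut\Gamma$ to get $X_\Gamma=(\#\Aut\Gamma)X_{\Gamma,\triv}$. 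So (a), (b), (d) are mutually equivalent; specialising symmetric functions at $1^m$ gives (b)$\Rightarrow$(c) and (d)$\Rightarrow$(e), and the same weighted-sum argument applied to the specialisation $\chi_\Gamma=\sum_\gamma(\dim\gamma)\chi_{\Gamma,\gamma}$ of Theorem~\ref{thm:GraphAlg}(4) gives (e)$\Rightarrow$(c).

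It remains to prove (c)$\Rightarrow$(a). This is the only step that is not pure formal manipulation, and the one I expect to be the main obstacle, since (c) is a priori a much weaker, merely polynomial, statement; but the orbit-counting (Burnside) formula dispatches it cleanly. For every positive integer $m$ one has
\[
\#\Colour_{\Gamma,m}\;=\;(\#\Aut\Gamma)\cdot\#(\orbits{\Aut\Gamma}{\Colour_{\Gamma,m}})\;=\;\sum_{g\in\Aut\Gamma}\#\Colour_{\Gamma,m}^{\,g}\;\ge\;\#\Colour_{\Gamma,m}^{\,1}\;=\;\#\Colour_{\Gamma,m},
\]
where the first equality is (c) evaluated at $m$, using $\chi_\Gamma(m)=\#\Colour_{\Gamma,m}$ and $\chi_{\Gamma,\triv}(m)=\#(\orbits{\Aut\Gamma}{\Colour_{\Gamma,m}})$. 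Equality throughout forces $\#\Colour_{\Gamma,m}^{\,g}=0$ for every $g\ne 1$; taking $m=\#V(\Gamma)$ and recalling that every proper colouring of $\Gamma$ uses at most $\#V(\Gamma)$ colours, we conclude that $\Aut\Gamma$ acts freely on $\Colour_\Gamma$, i.e.\ (a). The implications (a)$\Leftrightarrow$(b), (a)$\Rightarrow$(d)$\Rightarrow$(b), (b)$\Rightarrow$(c), (d)$\Rightarrow$(e)$\Rightarrow$(c), and (c)$\Rightarrow$(a) together cover all five conditions.
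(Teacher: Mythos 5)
Your proof is correct, and it leans on the same elementary facts as the paper's ((a)$\Leftrightarrow$(b) by comparing $M_\alpha$-coefficients and using ``free $\Leftrightarrow$ $\#S=\#G\cdot\#(\orbits{G}{S})$'', and (a)$\Rightarrow$(d) from $\k^S\cong \reg_G^{\oplus\#(\orbits{G}{S})}$ for free actions), but you organise the remaining implications differently. The paper proves each of (c), (d), (e) \emph{directly} equivalent to (a): for (c) it repeats the orbit-count criterion on each $\Colour_{\Gamma,m}$, and for (d), (e) it uses the observation that $\k^S$ is a subrepresentation of $\k^G\otimes\k^{\#(\orbits{G}{S})}$, so that $\dim\Hom_G(\k^S,\gamma)\leq(\dim\gamma)\#(\orbits{G}{S})$ with equality for every $\gamma$ if and only if the action is free; the equality case handles the reverse implications (d)$\Rightarrow$(a), (e)$\Rightarrow$(a) in one stroke. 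You instead close a cycle (a)$\Rightarrow$(d)$\Rightarrow$(e)$\Rightarrow$(c)$\Rightarrow$(a): the steps (d)$\Rightarrow$(b) and (e)$\Rightarrow$(c) come from summing over $\Irr(\Aut\Gamma)$ with weights $\dim\gamma$, using Theorem \ref{thm:GraphAlg}(4) and $\sum_\gamma(\dim\gamma)^2=\#\Aut\Gamma$, and (c)$\Rightarrow$(a) comes from Burnside's orbit-counting formula plus the (correct, and worth making explicit as you did) reduction of freeness on $\Colour_\Gamma$ to freeness on $\Colour_{\Gamma,\#V(\Gamma)}$ via the fact that stabilisers depend only on colour classes. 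What your route buys is that you never need the equality-case analysis of the Hom-dimension inequality---only the easy implication ``free $\Rightarrow$ multiple of the regular representation''; what the paper's route buys is brevity and uniformity (no Burnside, no sum-of-squares identity, no detour through the decomposition $X_\Gamma=\sum_\gamma(\dim\gamma)X_{\Gamma,\gamma}$), since every condition is tied to (a) by a single equality criterion. Incidentally, your worry that (c)$\Rightarrow$(a) is the delicate step is unfounded in the paper's scheme: evaluating (c) at each $m$ already gives $\#\Colour_{\Gamma,m}=\#\Aut\Gamma\cdot\#(\orbits{\Aut\Gamma}{\Colour_{\Gamma,m}})$, which is freeness on $\Colour_{\Gamma,m}$ by the orbit-size count alone, so Burnside is not needed (though it is perfectly valid).
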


\begin{proof}
Recall that for each $\alpha\in \Comp_{\#V(\Gamma)}$ the coefficient of $M_\alpha$ in $X_{\Gamma}$ is $\#\Colour_{\Gamma,\alpha}$, while in $X_{\Gamma,\triv}$ this coefficient is $\#(\orbits{\Aut\Gamma}{\Colour_{\Gamma,\alpha}})$. Since an action of a finite group $G$ on a set $S$ is free if and only if $\# S = \# G \cdot \#(\orbits{G}{S})$, we conclude that (a) and (b) are equivalent. Applying the same considerations to the action of $\Aut\Gamma$ on the set $\Colour_{\Gamma,m}$ for each positive integer $m$ shows that (a) and (c) are equivalent.

For (d) and (e), note that if a finite group $G$ acts on a finite set $S$, then the permutation representation $\k^S$ of $G$ is isomorphic to a subrepresentation of $\k^G\otimes \k^{\#(\orbits{G}{S})}$, and is isomorphic to that whole representation if and only $G$ acts freely on $S$. We thus have, for every $\gamma\in \Irr(G)$,
\[
\dim\Hom_G(\k^S,\gamma) \leq \dim\Hom_G(\k^G\otimes \k^{\#(\orbits{G}{S})}, \gamma) = (\dim\gamma)\#(\orbits{G}{S}),
\]
with equality holding for every $\gamma$ if and only if $G$ acts freely on $S$. 

Applying this observation to the action of $\Aut\Gamma$ on  $\Colour_{\Gamma,\alpha}$ for each $\alpha\in \Comp_{\#V(\Gamma)}$ shows that the statements (a) and (d) are equivalent, and the same argument applied to the action of $\Aut\Gamma$ on each $\Colour_{\Gamma,m}$ shows that (a) and (e) are equivalent. 
\end{proof}

\begin{example}
The situation is  more interesting for the graphs with many symmetric colourings. To take an extreme example, consider for each $n\geq 0$ the graph $\overline{K}_n$ with $n$ vertices and no edges. We have
\[
X_{\overline{K}_n} = M_{(1)}^n = \left( \textstyle\sum_i x_i\right)^n.
\]
The additive subgroup $\Symreps$ of $\GraphAlg$ spanned by $\bigsqcup_{n\geq 0} \Irr(\Aut \overline{K}_n )$ is a Hopf subalgebra: it is the Hopf algebra of representations of the symmetric groups studied by Zelevinsky \cite[\S6]{Zelevinsky}. For each $n$ and each integer partition $\lambda$ of $n$, let $\gamma_\lambda$ be the irreducible representation of $S_n$ associated to $\lambda$, as explained for instance in \cite[Chapter 2]{James-Kerber}. Then we have 
\[
X_{\overline{K}_n,\gamma_\lambda} = s_{\lambda}
\]
with $s_\lambda$ being the Schur function (see, e.g., \cite[Chapter 7]{Stanley-EC2}) associated to $\lambda$. Indeed, the maps $\Psi :\gamma_\lambda\mapsto X_{\overline{K}_n,\gamma_\lambda}$ and $\Phi: \gamma_\lambda\mapsto s_\lambda$ are both Hopf algebra homomorphisms from $\Symreps$ to $\Sym_{\Z}$, and both maps yield the same character $\Symreps\to\Z$ upon specialisation at $(1,0,0,\ldots)$, and so the uniqueness in \cite[Theorem 4.3]{ABS} ensures that $\Psi=\Phi$. The decomposition of $X_{\overline{K}_n}$ given by part (4) of Theorem \ref{thm:GraphAlg} is thus the expansion of $\left(\sum x_i\right)^n$ in Schur functions.
\end{example}

\begin{example}
For our final example we  compute the symmetric functions $X_{\Gamma,\gamma}$ for the butterfly graph $\Gamma$:
\[
\xygraph{ 
!{(0,0) }*+{e}="a" 
!{(1,.5) }*+{c}="b" 
!{(1,-.5) }*+{d}="c" 
!{(-1,.5)}*+{a}="d"
!{(-1,-.5)}*+{b}="e" 
"a"-"b"-"c"-"a"-"d"-"e"-"a"
}
\]
Stanley computed the chromatic symmetric function $X_\Gamma$ in \cite{Stanley-chromatic}.  Let $m_\alpha\in \Sym_{\Z}$ be the monomial symmetric function associated to the integer composition $\alpha$ (that is, the sum of the quasisymmetric functions $M_{\beta}$ over all integer compositions $\beta$ obtained from $\alpha$ by permuting the parts). In this notation, we have
\[
X_\Gamma = 4 m_{(2,2,1)} + 24 m_{(2,1,1,1)} + 120 m_{(1,1,1,1,1)}.
\]

Considering the complement $\overline{\Gamma}$ shows that the automorphism group of $\Gamma$ is isomorphic to the dihedral group $D_4$, generated by the $4$-cycle $r=(a\, d\, b\, c)$ and the involution $f=(a\, c)(b\, d)$. This group acts freely on $\Colour_{\Gamma,(2,1,1,1)}$ and $\Colour_{\Gamma,(1,1,1,1,1)}$, and so the arguments from Proposition \ref{prop:free-action} show that for each $\gamma\in \Irr(\Aut\Gamma)$ we have
\begin{equation*}\label{eq:bowtie-1}
X_{\Gamma,\gamma} = c_{\Gamma,\gamma} m_{(2,1,1)} + 3(\dim\gamma) m_{(2,1,1,1)} + 15(\dim\gamma) m_{(1,1,1,1,1)}
\end{equation*}
for some non-negative integer $c_{\Gamma,\gamma}$. 

To compute the coefficients $c_{\Gamma,\gamma}$ we must examine the permutation representation coming from the action of $\Aut\Gamma$ on $\Colour_{\Gamma,(2,2,1)}$. This action is transitive, but not free: for instance, the colouring
\[
\xygraph{ 
!{(0,0) }*+{3}="a" 
!{(1,.5) }*+{1}="b" 
!{(1,-.5) }*+{2}="c" 
!{(-1,.5)}*+{1}="d"
!{(-1,-.5)}*+{2}="e" 
"a"-"b"-"c"-"a"-"d"-"e"-"a"
}
\]
has isotropy $\{1,f\}$. We thus have $c_{\Gamma,\gamma} = \dim \gamma^f$,
the dimension of the space of $f$-fixed vectors in $\gamma$. These dimensions are easily computed from the character table of $\Aut\Gamma\cong D_4$, which we shall now recall. 

The group  $\Aut\Gamma$ has five conjugacy classes, represented by $1$, $r$, $r^2$, $f$, and $rf$. It has four $1$-dimensional representations---which we shall denote by $\triv$, $\chi_1$, $\chi_2$, and $\chi_3$---and a single irreducible $2$-dimensional representation $\rho$. The character table is as follows:
\begin{center}
\begin{tabular}{r | r  r  r r r }
$D_4$ & $1$ & $r$ & $r^2$ & $f$ & $rf$ \\
\hline 
$\triv$ & $1$ & $1$ & $1$ & $1$ & $1$ \\
$\chi_1$ & $1$ & $-1$ &  $1$ & $-1$ & $1$ \\
$\chi_2$ & $1$ & $-1$ & $1$ & $1$ & $-1$ \\
$\chi_3$ & $1$ & $1$ & $1$ & $-1$ & $-1$ \\
$\rho$ & $2$ & $0$ & $-2$ & $0$ & $0$ 
\end{tabular}
\end{center}
Computing the coefficients $c_{\Gamma,\gamma}=\dim \gamma^f$ from this table, we find:
\[
\begin{aligned}
X_{\Gamma,\triv}= X_{\Gamma,\chi_2} & = m_{(2,2,1)}  + 3m_{(2,1,1,1)} + 15 m_{(1,1,1,1,1)} \\
X_{\Gamma,\chi_1} = X_{\Gamma,\chi_3} & = 3m_{(2,1,1,1)} + 15 m_{(1,1,1,1,1)}  \\
X_{\Gamma,\rho} & = m_{(2,2,1)} +  6m_{(2,1,1,1)} + 30 m_{(1,1,1,1,1)}.
\end{aligned}
\]

%The function $X_{\Gamma,\triv}$ likewise distinguishes the graphs
%\[
%\xygraph{
%!{(-1,.5)}*+{\bullet}="a"
%!{(-1,-.5)}*+{\bullet}="b"
%!{(0,.5)}*+{\bullet}="c"
%!{(0,-.5)}*+{\bullet}="d"
%!{(1,.5)}*+{\bullet}="e"
%!{(1,-.5)}*+{\bullet}="f"
%"a"-"b"-"d"-"c"-"e"
%"d"-"f"
%"a"-"d"
%}
%\qquad\qquad \textrm{and}\qquad\qquad
%\xygraph{
%!{(-1,.5)}*+{\bullet}="a"
%!{(-1,-.5)}*+{\bullet}="b"
%!{(0,.5)}*+{\bullet}="c"
%!{(0,-.5)}*+{\bullet}="d"
%!{(1,.5)}*+{\bullet}="e"
%!{(1,-.5)}*+{\bullet}="f"
%"a"-"b"-"d"-"c"-"e"
%"d"-"f"
%"b"-"c"
%}
%\]
%which were shown in \cite{Orellana-Scott} to have equal chromatic symmetric functions. 
\end{example}

\bibliographystyle{alpha}
\bibliography{Sn-mod-p}

\end{document}